\documentclass{article}

\usepackage{amsmath,amssymb,amsthm,amsfonts,mathabx,mathtools}
\usepackage[all]{xy}

\usepackage[margin=1.25in]{geometry}
\usepackage{titling}
\usepackage{fancyhdr,indentfirst,color,hyperref}

\newtheorem{lemma}{Lemma}
\newtheorem*{question}{Question}
\newtheorem{criterion}{Criterion}
\newtheorem{theorem}{Theorem}
\newtheorem{proposition}{Proposition}
\newtheorem{remark}{Remark}
\newtheorem{corollary}{Corollary}

\def\Hom{\operatorname{Hom}}
\def\Eu{\operatorname{Eu}}
\def\Res{\operatorname{Res}}
\def\det{\operatorname{det}}
\def\ev{\operatorname{ev}}
\def\ft{\operatorname{ft}}
\def\coeff{\operatorname{Coeff}}
\def\rank{\operatorname{rank}}
\def\mod{\operatorname{mod}}

\def\Span{\operatorname{span}}
\def\virt{\operatorname{virt}}
\def\Id{\operatorname{Id}}
\def\C{\mathbb{C}}
\def\J{\mathcal{J}}
\def\O{\mathcal{O}}
\def\II{\mathfrak{I}}
\def\JJ{\mathfrak{J}}
\def\KK{\mathfrak{K}}
\def\L{\mathcal{L}}
\def\wT{\widetilde{T}}
\def\M{\widebar{\mathcal{M}}}
\def\t{\mathbf{t}}
\def\f{\mathbf{f}}
\def\g{\mathbf{g}}
\def\e{\mathbf{e}}
\def\one{\mathbf{1}}

\setlength{\droptitle}{-6em}

\title{Quantum K-theory of flag varieties via non-abelian localization}
\date{}
\author{Xiaohan YAN}

\begin{document}

\maketitle

\begin{abstract}
In this paper, we reconstruct explicitly the generating function of genus-zero K-theoretic permutation-invariant Gromov-Witten invariants, known as the big $\mathcal{J}$-function, for any partial flag variety. The reconstruction may start with any Weyl-group-invariant value of the well-understood big $\mathcal{J}$-function of an associated toric variety. We generalize the recursive method \cite{Givental:perm2}, based on torus fixed point localization, to deal with non-isolated one-dimensional toric orbits, through incorporating ``balanced broken orbits’’ into consideration and subsequently proving a vanishing result of their contribution. Furthermore, we extend the study to twisted generating functions of the flag variety, and demonstrate properties including a non-abelian quantum Lefschetz theorem and a duality between level structures. In the end, we construct a K-theoretic mirror in terms of Jackson-type integrals.
\end{abstract}

\noindent {\bf\small Keywords:} {\it\small Gromov-Witten invariants; K-theory; flag varieties; abelian/non-abelian correspondence.}


\ \\

\section{Introduction} 
\par The study of quantum K-theory was initiated as an analogue of quantum cohomology, or Gromov-Witten (GW) theory, at the beginning of this century by the foundational papers of Givental \cite{Givental:WDVV} and of Lee \cite{Lee}. Later, as the first step toward understanding the quantum K-theory of complete flag varieties, the two authors in their joint paper \cite{Givental-Lee} employed an \emph{ad hoc} method to compute a generating function of genus-zero \textbf{K-theoretic GW invariants}, and related the function to solutions of certain finite-difference equations. Generally speaking, the K-theoretic GW invariants are realized as virtual holomorphic Euler characteristics of vector bundles over $\M_{g,m}(X,d)$, the moduli space of stable maps to the target variety $X$. 

\par Various progress has been made in this field since then. In genus-zero case, the interests were re-ignited when the enumerative geometry of the K-theoretic GW invariants was related to integrable systems and representation theory. Generating functions of a variant of such invariants (defined using quasi-map-related compactifications) were found to appear as R-matrices which arise, on moduli of vacua such as Nakajima quiver varieties, in the stable envelope construction \cite{MO,Okounkov,AO}. They play a part in the 3D $\mathcal{N}=4$ mirror symmetry \cite{RSVZ,RSZV,S-Z,Dinkins} as well. These generating functions, called the vertex functions, are explicitly computed by Pushkar et al. \cite{PSZ} for grassmannians and Koroteev et al. \cite{KPSZ} for flag varieties, and are shown to appear naturally in the context of quantum XXZ spin chains and tRS models in theoretical physics. Connections between quantum K-theory and theoretical physics, especially the study of various super-symmetric gauge theories, have also been discovered in, for instance, the papers of \cite{NS1,NS2} and more recently of \cite{JMNT,UY}. Moreover, inspired by the Chern-Simons terms, Ruan and Zhang \cite{R-Z} introduced the level structures to quantum K-theory and discovered resemblances of $J$-functions to the mock theta functions, prompting new progress in the field such as \cite{RWZ}.

\par Genus-zero K-theoretic GW invariants in the Givental-Lee theory are recorded in generating functions called the K-theoretic big $\J$-functions, which are the main subject of this paper. While such functions are well-understood in the case of toric varieties, not much is known for the more general ``non-abelian’’ GIT quotients of vector spaces like quiver varieties. 
\begin{question}
How can we describe (the image of) the big $\J$-function of the GIT quotient $R//G$, where $R$ is a vector space and $G$ is a complex reductive group?
\end{question}
A complete answer to the Question seems too remote, but a first step in this direction has been made in literature using the quasi-map methods. In \cite{Wen} for instance, Wen computed the quasi-map small $I$-function, which by the wall-crossing results \cite{Zhang-Zhou} by Zhang and Zhou should represent one point on the image of the big $\J$-function. 

\par In order to recover the entire big $\J$-function, however, this is not sufficient. Indeed, for target spaces whose K-rings are generated by line bundles, the entire image can indeed be recovered from any one point of itself with the help of its intrinsic $\mathcal{D}_q$-module structure. Yet, it is not the case for other target spaces, like grassmannians, partial flag varieties, general quiver varieties or GIT quotients of vector spaces. In this paper, we provide, in permutation-invariant settings, a complete answer to the Question for partial flag varieties, using a different method purely in terms of stable map compactification. The method relies on a deeper understanding of the torus-equivariant geometry of the target spaces as well as certain associated toric varieties which we call the abelian quotients.

\subsection{Main theorems} \label{subsec11}
\par In this part, we state the main theorems and provide necessary definitions and constructions along the way. The more detailed ones will be postponed to later sections. 

\par Let $X$ be the flag variety $\text{Flag}(v_1,\cdots, v_n; N)$ parametrizing sequences of subspaces $V_1\subset V_2\subset\ldots\subset V_n\subset\mathbb{C}^N$ of $\mathbb{C}^N$ with $\dim V_i = v_i$. It can be regarded as a GIT quotient
\[
X = R//G = \Hom(\mathbb{C}^{v_1},\mathbb{C}^{v_2})\oplus\cdots\oplus\Hom(\mathbb{C}^{v_n},\mathbb{C}^N) // GL(v_1)\times\cdots\times GL(v_n)
\]
where the stability condition demands that each linear transformation in $\Hom(\mathbb{C}^{v_i},\mathbb{C}^{v_{i+1}})$ is injective. Let $\varphi = (\varphi_1, \cdots, \varphi_n)\in R$ and $g = (g_1,\cdots,g_n)\in G$, then the (left) action is defined so that $g\cdot\varphi = (g_2\varphi_1g_1^{-1}, \cdots, \varphi_ng_n^{-1})$. Abusing notations, we denote by $V_i \ (i=1,\cdots,n)$ the $i$-th tautological bundle of $X$. $H^2(X;\mathbb{Z})$ is generated by $\{c_1(V_i)\}_{i=1}^n$, and we denote by $\{Q_i\}_{i=1}^n$ the corresponding Novikov variables. That is to say, a curve class $d\in H_2(X;\mathbb{Z})$ is marked by the monomial $\prod_{i}Q_i^{d_i}$ with $d_i = \int_d c_1(V_i)$.

\par The associated abelian quotient of $X$, which is the GIT quotient of the same vector space by maximal torus $S\subset G$, is denoted by $Y$:
\[
Y = R//S = \Hom(\mathbb{C}^{v_1},\mathbb{C}^{v_2})\oplus\cdots\oplus\Hom(\mathbb{C}^{v_n},\mathbb{C}^N)//(\mathbb{C}^\times)^{v_1}\times\cdots\times(\mathbb{C}^\times)^{v_n}.
\]
Note that the stability conditions of $X$ and $Y$ are different. As convention, we take $v_{n+1} = N$. $Y$ may be pictured as a tower of fiber bundles 
\begin{equation*}
\xymatrix{
	 & Y\ar[d]\\
	 & \cdots\ar[d]\\
	(\mathbb{C}P^{v_{n-1}-1})^{v_{n-2}}\ar@{^(->}[r] & F_{n-1}\ar[d]\\
	(\mathbb{C}P^{v_{n}-1})^{v_{n-1}}\ar@{^(->}[r] & F_{n}\ar[d]\\
	 & (\mathbb{C}P^{N-1})^{v_{n}}.
}
\end{equation*}
We denote by $P_{is}$ the tautological bundle $\mathcal{O}(-1)$ on the $s$-th copy of $\mathbb{C}P^{v_{i+1}-1}$ in the $i$-th level ($1\leq i\leq n, 1\leq s\leq v_i$). These bundles pull back to $Y$ and generate its K-ring. The corresponding Novikov variables of $Y$ are denoted by $\{Q_{is}\}_{i=1,s=1}^{n\ \ \ v_i}$.

\par There is a natural torus action of $T = (\mathbb{C}^\times)^{N}$ on $X$ and $Y$ induced from the standard $T$-action on $\mathbb{C}^N$. We denote the equivariant parameters by $(\Lambda_1, \cdots, \Lambda_N)$. The rest of this Section is discussed all under $T$-equivariant settings unless otherwise stated. Parallel non-equivariant results may be obtained simply by taking $\Lambda_s = 1, \forall s$.

\par The K-theoretic big $\mathcal{J}$-function of $X$ is defined as
\[
\mathcal{J}^X(\t;q) = 1 - q + \t(q) + \sum_{m,d_i,\alpha} \prod_{i}Q_i^{d_i}\phi^\alpha\langle\frac{\phi_\alpha}{1-qL_0}, \t(L_1), \cdots, \t(L_n)\rangle^{S_m}_{0,m+1,d}, 
\]
where $\langle\cdot\rangle^{S_m}_{0,m+1,d}$ are the genus-0 K-theoretic permutation-invariant GW invariants (correlators) as appearing in \cite{Givental:perm1}. They are the $S_m$-invariant part of the virtual Euler characteristics of certain bundles over the moduli stack of stable maps $\widebar{\mathcal{M}}_{0,m+1}(X,d)$, over which we denote by $L_k\ (k=0,1,\cdots,m)$ the universal cotangent bundles at the marked points. The input $\t = \t(q) = \sum_k \t_kq^k$ is any Laurent polynomial in the formal variable $q$ with coefficients in the K-ring of $X$, and the output $\J^X(\t;q)$ is a rational function in $q$. The image of $\mathcal{J}^X$, denoted by $\mathcal{L}^X$, is an overruled cone in the so-called ``loop space'' of rational functions in $q$. The same definitions apply not only to $X$ but to any smooth projective varieties as well, in particular the abelian quotient $Y$.

\par From toric theory, it is not hard to prove that the $q$-rational function 
\[
\widetilde{J}^{tw,Y} = (1-q)\sum_{d\in \mathcal{D}} \prod_{i,s} Q_{is}^{d_{is}}\frac{\prod_{i=1}^n\prod_{r\neq s}^{1\leq r,s\leq v_i}\prod_{l=1}^{d_{is}-d_{ir}}(1-y\frac{P_{is}}{P_{ir}}q^l)}{\prod_{i=1}^{n-1}\prod_{1\leq r\leq v_{i+1}}^{1\leq s\leq v_i}\prod_{l=1}^{d_{is}-d_{i+1,r}}(1-\frac{P_{is}}{P_{i+1,r}}q^l)\cdot\prod_{1\leq r\leq N}^{1\leq s\leq v_n}\prod_{l=1}^{d_{ns}}(1-\frac{P_{ns}}{\Lambda_{r}}q^l)},
\]
represents a value of a twisted big $\mathcal{J}$-function of $Y$, with twisting of the type $(\text{Eu}, y^{-1}\bigoplus_{i=1}^n\bigoplus_{r\neq s}\frac{P_{ir}}{P_{is}})$ (see Section \ref{Jfunc}). The index set
\[
\mathcal{D}=\{(d_{is})_{i=1,s=1}^{n\ \ \ v_i}|d_{is}\geq 0\}.
\]
The parameter $y$ may be regarded as the equivariant parameter of an auxiliary fiber-wise $\C^\times$-scaling action on the twisting bundle, and is a common ingredient of twisted quantum K-theory for convergence reasons. Yet as we will see later, this specific parameter plays a crucial role when we relate the theory of $Y$ to that of $X$: redundant terms vanish only when $y=1$.

\par Denoting by $\J^{tw,Y}$ the big $\J$-function of $Y$ with such twisting, and by $\L^{tw,Y}$ its image cone, we have $\widetilde{J}^{tw,Y}\in\L^{tw,Y}$. $\L^{tw,Y}$ is invariant under certain operators. To start with, the ruling spaces of the K-theoretic image cone are invariant under operators of the form
\[
e^{D(Pq^{Q\partial_Q}, Q, q)}.
\]
for $D$ any Laurent polynomial, $P$ any line bundle over $Y$, and $Q$ the corresponding Novikov variable. The precise meaning and general theory of $Pq^{Q\partial_Q}$ will be further explained in Section \ref{subsecPFD}. Over the abelian quotient $Y$, $P$ may simply be taken as any monomial of the tautological bundles $P_{is}$. Moreover, for any Laurent expression $D$ as above, the image cone is also preserved by the operators of the following form, though not necessarily each ruling space \cite{Givental:perm8}:
\[
e^{\sum_{k>0}\frac{\Psi^k(D(Pq^{kQ\partial_Q}, Q, q))}{k(1-q^k)}}.
\]
Here the Adams operation $\Psi^k$ acts by $\Psi^k(Q) = Q^{|k|}$ and $\Psi^k(q) = q^k$. We denote by $\mathcal{P}$ the group generated by operators of the above two forms. Then $\L^{tw,Y}$ is invariant under $\mathcal{P}$.

\par The Weyl group $W = S_{v_1}\times\cdots\times S_{v_n}$ of $G$ (as in $X = R // G$) acts on the operators by permuting the subscripts $s$ in $P_{is}$ and $Q_{is}$. It is the $W$-invariant part $\mathcal{P}^W\subset\mathcal{P}$ that helps us reconstruct the big $\mathcal{J}$-function of $X$. In fact, only $W$-invariant expressions are well-defined over $X$. The central theorem of the paper is stated as follows, answering the Question at the beginning.
\begin{theorem} \label{0}
Elements in the orbit of $\widetilde{J}^{tw,Y}$ under the group $\mathcal{P}^W$ cover the entire cone $\mathcal{L}^X$, under the specialization of Novikov variables $Q_{is} = Q_i (\forall i,s)$ and the parameter $y = 1$.
\end{theorem}
The theorem effectively reconstructs the big $\mathcal{J}$-function of $X$ from one single point.
\begin{remark}
It will be seen from the proof of this paper that the reconstruction in Theorem \ref{0} does not necessarily start with $\widetilde{J}^{tw,Y}$, but any $W$-invariant point on $\L^{tw,Y}$ as well. Yet $\widetilde{J}^{tw,Y}$ is somehow a canonical choice, as the easiest point on $\L^{tw,Y}$ that we can write out.
\end{remark}
\begin{remark}
The reconstruction cannot be done directly on $\L^X$, essentially because $X$ does not have as many line bundles as $Y$ does to generate a big enough group $\mathcal{P}$.
\end{remark}
Using the identity element from the group $\mathcal{P}^W$, we recover the following result implied by Wen \cite{Wen} through quasi-map methods.
\begin{corollary} \label{MainCor}
\[
J^X = (1-q)\sum_{d\in \mathcal{D}} \prod_{i=1}^n Q_i^{\sum_{s}d_{is}}\frac{\prod_{i=1}^n\prod_{r\neq s}^{1\leq r,s\leq v_i}\prod_{l=1}^{d_{is}-d_{ir}}(1-\frac{P_{is}}{P_{ir}}q^l)}{\prod_{i=1}^{n-1}\prod_{1\leq r\leq v_{i+1}}^{1\leq s\leq v_i}\prod_{l=1}^{d_{is}-d_{i+1,r}}(1-\frac{P_{is}}{P_{i+1,r}}q^l)\cdot\prod_{1\leq r\leq N}^{1\leq s\leq v_n}\prod_{l=1}^{d_{ns}}(1-\frac{P_{ns}}{\Lambda_{r}}q^l)}
\]
is the small $J$-function of the flag variety $X$.
\end{corollary}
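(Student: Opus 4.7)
The plan is to derive Corollary \ref{MainCor} directly from Theorem \ref{0}. Since the identity operator lies trivially in $\mathcal{P}^W$, Theorem \ref{0} tells us that $\widetilde{J}^{tw,Y}$, evaluated under the substitutions $Q_{ij}=Q_i$ for all $j$ and $y=1$, produces a point on the overruled cone $\mathcal{L}^X$. Direct inspection confirms that this specialization reproduces the expression in the statement: $Q_{ij}=Q_i$ collapses $\prod_{i,j}Q_{ij}^{d_{ij}}$ to $\prod_i Q_i^{\sum_j d_{ij}}$, while $y=1$ removes the $y$-twist from the numerator. Hence the displayed expression does lie on $\mathcal{L}^X$.

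What remains is to identify this point of $\mathcal{L}^X$ with the \emph{small} $J$-function, i.e.\ the value $\mathcal{J}^X(\t;q)|_{\t=0}$. I would use the standard characterization that the small $J$-function is the unique element of $\mathcal{L}^X$ whose Laurent-polynomial-in-$q$ part equals $1-q$; equivalently, for each nonzero $d\in\mathcal{D}$, the coefficient of $\prod_i Q_i^{\sum_j d_{ij}}$ in $J^X$ must be a proper $q$-rational function vanishing as $q\to\infty$. The task reduces to establishing the strict inequality $\deg_q N_d+1<\deg_q D_d$ for the $q$-degrees of the numerator and denominator of the $d$-th summand, the $+1$ accounting for the $(1-q)$ prefactor.

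To verify this inequality, I would count degrees factor by factor. A factor $(1-Aq^l)$ contributes $l$ to the $q$-degree when $l>0$ and $0$ when $l\le 0$ (using the convention $\prod_{l=1}^f=\prod_{l=-\infty}^f/\prod_{l=-\infty}^0$, the $l\le 0$ case amounts to a reciprocal that tends to a nonzero constant as $q\to\infty$). Pairing the $(r,s)$ and $(s,r)$ entries in the Weyl-symmetric numerator product yields $\binom{|d_{is}-d_{ir}|+1}{2}$ per unordered pair, while analogous binomial sums capture the inter-level and tautological denominator factors. The desired strict inequality should then follow from the flag-variety constraints $v_1<v_2<\cdots<v_n<v_{n+1}=N$ together with routine binomial manipulations. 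The main obstacle in this step is the careful bookkeeping of the convention for products over non-positive index ranges, as it silently introduces constant factors and negative $q$-powers that must be tracked to correctly compute the leading behavior at $q\to\infty$. Once the inequality is confirmed, the specialized $\widetilde{J}^{tw,Y}$ is identified as the small $J$-function of $X$, completing the proof.
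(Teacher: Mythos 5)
Your overall strategy is the same as the paper's: use the identity element of $\mathcal{P}^W$ in Theorem~\ref{0} to produce a point on $\mathcal{L}^X$, then identify it with the small $J$-function by checking that its $\mathcal{K}_+$-projection is $1-q$, which reduces to a $q$-degree comparison for each nonzero $d$. The weak spot is the claim that the inequality ``should then follow from the flag-variety constraints $v_1<\cdots<v_{n+1}=N$ together with routine binomial manipulations.'' That seriously understates what is needed, for two reasons. First, the comparison has to be made after localizing $J^X = \sum_a J^X_a\phi^a$ to $T$-fixed points of $X$, both because the expressions $(1 - (P_{is}/P_{i'r})q^l)$ only have a well-defined $q$-degree once the Chern roots are specialized, and because it is this specialization that enables the crucial reduction the paper uses: whenever $d_{ir} < d_{i+1,r}$, the diagonal term (with $r=s$) of the level-$i$ denominator moves to the numerator and contributes a factor $(1-q^0)=0$ at the fixed point $A$, so such $d$ may be discarded. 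This reduction is the whole point: only under the assumption $d_{ir}\ge d_{i+1,r}$ does $d_{is}>d_{ir}$ imply $d_{is}>d_{i+1,r}$, which is what lets the inter-level denominator binomials dominate the same-level numerator binomials; the paper then closes the argument by induction on $n$, with the grassmannian inequality of \cite{Givental-Yan} as the base case. Without the vanishing reduction the level-by-level comparison genuinely fails --- for instance, with $n=2$, $(v_1,v_2,N)=(2,3,4)$, $d_{11}=0$, $d_{12}=3$, $d_{21}=d_{22}=d_{23}=5$, the level-$1$ denominator contributes $q$-degree $0$ while the level-$1$ numerator contributes $\binom{4}{2}$ --- so direct ``routine'' manipulations of the raw binomial sums will not go through; you must first prune via the vanishing argument and then induct.
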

The small $J$-function is the distinguished value of $\mathcal{J}^X$ at $\t = 0$. To clarify the notations, $P_{is}$ in Corollary \ref{MainCor} denote now the K-theoretic Chern roots of the tautological bundles $V_i$ on $X$, for $i=1,2,\ldots,n$. We obtain the small $J$-function of $X$ without $T$-action by taking $\Lambda_s = 1$ for all $s$.

\par Theorem \ref{0} generalizes the result of \cite{Givental:perm8} where the K-ring of the target variety is required to be generated by line bundles (examples include toric varieties and complete flag varieties). Reconstruction of a different flavor is provided under similar assumptions of the K-ring in \cite{IMT}, which recovers the big quantum K-ring from the small $J$-function through analysis of certain $q$-shift operators.

\par Our strategy of proving Theorem \ref{0} is mainly divided into two parts:
\begin{itemize}
	\item to show that elements in the $\mathcal{P}^W$-orbit of $\widetilde{J}^{tw,Y}$ all lie on $\L^X$ under the given specialization;
	\item to show that such orbit covers the entire $\L^X$.
\end{itemize}
The first part of the proof is achieved primarily by a recursive characterization of the big $\J$-function. This part is where the difficulty of this paper mainly lies. We explain further our method in Section \ref{subsec12}, but leave all details to later of the paper. The outcome of it is Theorem \ref{2}, which is actually a $\widetilde{T}$-equivariant version of what we need ($\widetilde{T}$ being an enlargement of the torus $T$), but reduces to the latter as we specialize all redundant equivariant parameters to 1.  Then in the second part, we prove a reconstruction theorem (Theorem \ref{Reconstruction}), stronger than the one needed by the main theorem, which constructs an explicit family parametrizing all values of $\J^X$ starting from \emph{any} $W$-invariant point on $\L^{tw,Y}$. This part involves essentially a surjectivity argument, which is achieved by a $W$-equivariant formal Implicit Function Theorem.

\par Twisted big $\mathcal{J}$-functions of $X$ may be reconstructed using the same method, as we will see in various examples considered in this paper. Among them we list a few major results here. In the case of Euler-type twistings, consider $E$ a vector bundle over $X$. It admits a formal decomposition $E = \mu^{-1} \sum_{k=1}^r L_k$ into K-theoretic Chern roots $L_k$, where each summand $L_k$ is a monomial in $P_{is}$ and $\mu$ is the equivariant parameter for fiber-wise $\C^\times$-scaling on $E$. We have the following quantum-Lefschetz-type theorem concerning the image cone $\L^{X,(\Eu,E)}$ of the $(\Eu,E)$-twisted big $\J$-function (see Section \ref{Jfunc} for precise meaning).
\begin{theorem} \label{MainTheoremLefschetz}
Suppose
\[
J = \sum_{d\in\mathcal{D}} \prod_{i,s} Q_{is}^{d_{is}} \cdot J_d(P,\Lambda, y;q) 
\]
is any $W$-invariant point on $\L^{tw,Y}$ (like $\widetilde{J}^{tw,Y}$). Then,
\[
J^{(\Eu,E)} = \sum_{d\in\mathcal{D}} \prod_{i} Q_{i}^{\sum_s d_{is}} \cdot \prod_{k=1}^r \prod_{l=1}^{\langle c_1(L_k),d \rangle} (1- \mu L_k(P)\ q^l) \cdot J_d(P,\Lambda,1;q)
\]
locates on the image cone $\L^{X,(\Eu,E)}$, under the specialization $Q_{is} = Q_i \ (\forall i,s)$ and $y=1$. 
\end{theorem}
Here $P$ refers to the collection of all $P_{is}$ and $\Lambda$ refers to the collection of all $\Lambda_i$. For simplicity of notations, we follow similar convention for the rest of this paper as long as no confusion is caused. 

\par In fact, the entire $\L^{X,(\Eu,E)}$ is recovered by applying $\mathcal{P}^W$ to $J$, just like in Theorem \ref{0}. We omit this part of the conclusion to avoid repetition. 

\par With proper choice of $E$, we may use such twistings to express big $\J$-functions of hypersurfaces or even complete intersections in $X$ such as generalized flag varieties. Moreover, a more general form will be proved in Section \ref{twist}, which enables us to re-interpret the quasi-map vertex functions in terms of stable-map big $\J$-functions.

\par Aside from the Euler-type twistings above, we consider also K-theoretic GW invariants twisted by the level structures \cite{R-Z}. Given any vector bundle $E$ over $X$ and integer $l$, the level-$l$ structure of $E$, usually abbreviated as $(E,l)$ in later sections, modifies the virtual structure sheaf by
\[
\mathcal{O}^{\text{virt}}_{0,m,d}\longmapsto \mathcal{O}^{\text{virt}}_{0,m,d}\otimes \det^{-l}(\ft_*\ev^*E).
\]
Taking $E$ as one of the tautological bundles $V_j$ where $1\leq j\leq n$, we have
\begin{theorem} \label{MainTheoremLevel}
\[
J^{X,(V_j,l)} = (1-q)\sum_{d\in \mathcal{D}} \prod_{i=1}^n Q_i^{\sum_s d_{is}} \cdot \left[\prod_{s=1}^{v_j} P_{js}^{d_{js}}q^{\frac{d_{js}(d_{js}-1)}{2}}\right]^l \cdot J^X_d
\]
represents a point on $\L^{X,(V_j,l)}$, the image cone of the big $\J$-function with level structure $(V_j,l)$. 
\end{theorem}
Here for simplicity of notation, we denote by $J^X_d$ the coefficient of $\prod_i Q_i^{\sum_s d_{is}}$ in the summand of $J^X$ associated to $d\in\mathcal{D}$, as appearing in Corollary \ref{MainCor}. The above theorem generalizes the results in \cite{R-Z} and in \cite{Wen} where $E$ is required to be generated by actual line bundles on $X$. Moreover, it turns out that the level structures are not entirely independent on flag varieties, and we will prove the level correspondence phenomenon (Theorem \ref{levelcorrespondence}), generalizing the observation in \cite{Dong-Wen} for grassmannians.

\par Finally, we construct K-theoretic mirrors of the small $J$-functions of flag varieties as Jackson-type integrals ($q$-integrals) following \cite{Givental-Yan}. Define index sets
\begin{align*}
\mathfrak{I}_X & := \{(i,s,r)| 1\leq i\leq n, 1\leq s\leq v_i, 1\leq r\leq v_{i+1} \},\\
\mathfrak{I}_Y & := \{(i,s,s')| 1\leq i\leq n, 1\leq s,s'\leq v_i, s\neq s'\}.
\end{align*}
Then, the K-theoretic mirrors are given in the theorem below.
\begin{theorem} \label{thm2}
The $q$-integral
\begin{align*}
\mathcal{I}^{T} = \int_{\Gamma\subset\mathcal{X}_{\{Q_{i}\}}} & \prod_{s=1}^{v_n}\prod_{r=1}^{N} X_{nsr}^{\ln\Lambda_{r}/\ln q} \cdot \prod_{i,s,s'} Y_{iss'} \cdot e^{\sum_{k>0}\frac{\sum_{i,s,r} X_{isr}^k - q^k\sum_{i,s,s'} Y_{iss'}^k}{k(1-q^k)}}\\
& \cdot \frac{(\bigwedge_{i,s,r}d_q\ln X_{isr})\wedge(\bigwedge_{i,s,s'}d_q\ln Y_{iss'})}{\bigwedge_{1\leq i\leq n, 1\leq s\leq v_i} (\sum_{r=1}^{v_{i+1}}d_q\ln X_{isr} - \sum_{r=1}^{v_{i-1}}d_q\ln X_{i-1,rs} - \sum_{1\leq s'\leq v_i}^{s'\neq s}d_q\ln\frac{Y_{iss'}}{Y_{is's}})},
\end{align*}
where the products and sums with omitted index sets are taken either over $(i,s,r)\in\mathfrak{I}_X$ or $(i,s,s')\in\mathfrak{I}_Y$, over suitable choices of (linear combinations of) $q$-lattices $\Gamma$ on the torus
\[
\mathcal{X}_{\{Q_{i}\}} = \left\{(X,Y)\in\mathcal{X}|\frac{\prod_{r=1}^{v_{i+1}}X_{isr}}{\prod_{r=1}^{v_{i-1}}X_{i-1,rs}} = Q_{i}\frac{\prod_{1\leq s'\leq v_i}^{s'\neq s}Y_{iss'}}{\prod_{1\leq s'\leq v_i}^{s'\neq s}Y_{is's}},\ \ \forall i,s\right\},
\]
represents components of the $T$-equivariant small $J$-function $J^X$.
\end{theorem}
One example of the $q$-lattices $\Gamma$ considered in the theorem is the lifting of 
\[
\prod_{(i,s,r)\in \mathfrak{I}_X}^{r\neq s}\{X_{isr}=q^{-d} | d\in\mathbb{Z}\} \ \times \ \prod_{(i,s,s')\in \mathfrak{I}_Y}\{Y_{iss'}=-q^d | d\in\mathbb{Z}\}
\]
onto the torus $\mathcal{X}_{\{Q_{i}\}}$, and the integral over this specific ``cycle'' is exactly (a multiple of) the localization of $J^X$ at the $T$-fixed point $E$ representing the flag
\[
\Span\{\e_1,\cdots,\e_{v_1}\}\subset \Span\{\e_1,\cdots,\e_{v_2}\} \subset\cdots\subset \Span\{\e_1,\cdots,\e_{v_n}\} \subset \mathbb{C}^N
\]
of $X$. In fact, the theorem amounts to proving that $\mathcal{I}^{T}$ and $J^X$ satisfy similar $q$-difference equations.

\subsection{Fixed point localization} \label{subsec12}

\par Our major idea of proof in this paper, which may properly be called non-abelian localization as in \cite{Givental-Yan}, involves practicing torus fixed point localization in the context of abelian/non-abelian correspondence. 

\par Abelian/non-abelian correspondence dates back to the papers \cite{BCK1} and \cite{BCK2} in which the authors studied the cohomological $J$-function for grassmannians and flag manifolds via their associated abelian quotients which are toric varieties. In this way, they built up a quantum version of the classical theory on (equivariant) cohomology of GIT quotients by Ellingsrud-Str{\o}mme \cite{E-S} and Martin \cite{Martin}. The correspondence was then completed to the level of Frobenius structures in \cite{CKS}. Such correspondence has been generalized in various settings. In \cite{Webb}, Webb proved in cohomological settings the $I$-function correspondence using the quasimap compactification for quotients $Z//G$ with $Z$ being any l.c.i affine variety. In \cite{CLS}, Coates et al. derived a correspondence with bundles and used it to compute GW invariants of certain blow-ups. In \cite{GW1} and \cite{GW2}, in cohomological and K-theoretic settings respectively, Gonz\'alez and Woodward proved another variant of the correspondence for quotients $Z//G$ with $Z$ being smooth projective to the level of potential functions and quantum products, using the theory of gauged maps. 

\par On the other hand, applying fixed point localization on the moduli space of stable maps gives rise to a characterization of the big $\J$-functions through a system of recursive relations in the torus-equivariant settings. It holds true under the condition that the target space is equipped with a torus action with isolated fixed points and isolated 1-dim invariant orbits connecting the fixed points. More precisely, given two fixed points $a$ and $b$ and their connecting orbit $\varphi$, the recursive relations take roughly the form
\[
\Res_{q = \lambda^{1/m}}\J|_a(q)\frac{dq}{q} = \coeff_m(\varphi) \cdot \J|_b(\lambda^{1/m}),
\]
where $\lambda$ is the tangent character of $\varphi$ at $a$, $m$ is any positive integer, and $\coeff_m(\varphi)$ is a rather explicit recursion coefficient. In other words, the residues at poles of the specialization of the big $\J$-function at $a$ are expressed in terms of its specialization at other fixed points (like $b$) multiplied with recursion coefficients. 

\par The recursion coefficients allow for building up a connection between the quantum K-theory of non-abelian GIT quotients like flag varieties with that of their associated abelian quotients. The special case of grassmannians is considered in \cite{Givental-Yan}. Both the grassmannian and its associated abelian quotient (a product of projective spaces, in this case) satisfies the premises of applying the recursive characterization. Moreover, one may see that the recursion coefficients of a certain Euler-type twisted theory on the abelian quotient descend exactly to those of the standard theory on the grassmannian. Consequently, the image cone of the big $\J$-function of the grassmannian, roughly speaking, is proved to be the Weyl-group-invariant part of that of its abelian quotient.

\par One may expect the generalization from grassmannians to flag varieties to be straight-forward, but this is not true, as a vital new problem arises in our case of flag varieties: there is no longer a torus acting naturally on flag variety $X$ as well as abelian quotient $Y$ and giving rise to isolated fixed points on both targets at the same time. In fact, it will be seen that the most natural torus action on $Y$ has non-isolated fixed points which appear essentially because of the discrepancy between the stability conditions of $X$ and $Y$. The same problem arises in general GIT quotients as well. 

\par To solve this problem, one is motivated to develop an analogue of the recursive characterization that works in the non-isolated case. The idea we employ here is, to first enlarge the torus action on the abelian quotient, which reduces the problem to the isolated case, and then to study the limiting behavior as we restrict the torus back to the initial one and non-isolated fixed-points arise. In this process, a careful study of the non-isolated fixed point loci as well as the strata of non-isolated orbits connecting them is needed. The most important observation here is the necessity of taking into consideration the ``balanced broken orbits'' in between the fixed points when we compare the recursive formulae of $X$ and of $Y$.
\begin{remark}
The role played by flag varieties in the derivation is not special. The entire process may be generalized to quiver varieties or other GIT quotients of vector spaces of interest as well, without introducing extra technicalities. The central problem in such generalization reduces to finding a hands-on (e.g. combinatorial) description of the fixed points and one-dimensional invariant orbits with respect to the torus action, on both $X$ and $Y$.
\end{remark}

\subsection{Structure}

\par The organization of the paper follows approximately the same logic as in Section \ref{subsec11}. In Section \ref{prelim}, we introduce some notations that will be used to describe the geometry of $X$ and $Y$ throughout the paper, and recall in more detail the definitions related to $\mathcal{J}$-functions. We also describe in more detail the basics of the recursive characterization mentioned above. Then in Section \ref{NonAbelLoc} and \ref{NonisoRec}, we will prove the first half of the main theorem, which is enough to justify Corollary \ref{MainCor}. In Section \ref{recon}, we deduce the reconstruction theorem and finish (the second half of) the proof of Theorem \ref{0}. As further illustration of the newly developed strategy, we consider twisted quantum K-theories of the flag varieties in Section \ref{twist}, and analyze their properties. Theorem \ref{MainTheoremLefschetz} and Theorem \ref{MainTheoremLevel} are demonstrated here among others. In the end, in Section \ref{chap5}, we construct the K-theoretic mirrors of the flag varieties and prove Theorem \ref{thm2}.

\section{Preliminaries} \label{prelim}
\par Recall that we denote by $X = \text{Flag}(v_1,\cdots, v_n; N)$ the partial flag variety regarded as a GIT quotient
\[
X = R//G = \Hom(\mathbb{C}^{v_1},\mathbb{C}^{v_2})\oplus\cdots\oplus\Hom(\mathbb{C}^{v_n},\mathbb{C}^N) // GL(v_1)\times\cdots\times GL(v_n),
\]
and $Y$ its associated abelian quotient
\[
Y = R//S = \Hom(\mathbb{C}^{v_1},\mathbb{C}^{v_2})\oplus\cdots\oplus\Hom(\mathbb{C}^{v_n},\mathbb{C}^N)//(\mathbb{C}^\times)^{v_1}\times\cdots\times(\mathbb{C}^\times)^{v_n},
\]
where $S$ is the maximal torus in $G$ consisting of diagonal matrices. The stability condition of $X$ requires that each linear transformation in $\Hom(\mathbb{C}^{v_i},\mathbb{C}^{v_{i+1}})$ is injective, while the stability condition of $Y$ requires that the matrices have no zero columns. Note that due to the discrepancy in the stability conditions, $X$ is not a further quotient of $Y$. Both spaces are endowed with natural action by torus $T = (\mathbb{C}^\times)^N$, induced from the standard action of $T$ on $\mathbb{C}^N$ appearing in $\Hom(\mathbb{C}^{v_n},\mathbb{C}^N)$.

\subsection{K-rings} \label{Kring}
\par $K_T(X)$, the $T$-equivariant K-ring of $X$, is generated over the coefficient ring of equivariant parameters $\mathbb{C}[\Lambda_1^{\pm 1}, \cdots, \Lambda_n^{\pm 1}]$ by the tautological bundles $\{V_i\}_{=1}^n$ and their exterior powers. By the splitting principle, there exists a variety $\widetilde{X}$ such that $K_T(X)\rightarrow K_T(\widetilde{X})$ is injective and all $V_i$ can be decomposed into direct sum of line bundles in the latter. Assume $V_i = \sum_{s=1}^{v_i} P_{is}$, then $\Lambda^k V_i$ is simply the $k$-th elementary symmetric polynomial of $\{P_{is}\}_{s=1}^{v_i}$. Abusing the notations, we replace $V_i$ with $\sum_{s=1}^{v_i} P_{is}$ in $K_T(X)$ when convenient, although none of $P_{is}$ standing alone is a well-defined line bundle over $X$. These notations are, however, convenient as they are compatible with the tautological bundles $\{P_{is}\}$ on the abelian quotient $Y$ under the diagram
\begin{equation*}
\xymatrix{
	R^s(G)/S\ \ar@{^(->}^{\iota\quad\ }[r]\ar^q[d] & \ Y = R^s(S)/S\\
	X = R^s(G)/G &
}
\end{equation*}
where $R^s(G)$ and $R^s(S)$ stands for the stable locus of the $G$- and $S$-action respectively. In other words, we have
\begin{proposition} \label{descend}
The tautological bundles of $X$ and $Y$ are related by
\[
\iota^*\bigoplus_{k=1}^{v_i} P_{ik} = q^*V_i.
\]
\end{proposition}
In order to simplify the language that we use, we say a certain bundle over $Y$ \textbf{descends} to one over $X$ if they pull back to the same bundle over $R^s(G)/S$. Algebraically, it means that they may be represented by the same (symmetric) expression of $\{P_{is}\}$. In fact, the equivariant K-ring $K_T(Y)$ bears a \textbf{Weyl group} action by $W = S_{v_1}\times\cdots\times S_{v_n}$ which effectively permutes the subscript $s$ in $P_{is}$. Only the $W$-invariant expressions in $K_T(Y)$ descend to well-defined elements of $K_T(X)$.

\par Recall that we take the Novikov variables $\{Q_i\}_{i=1}^n$ of $X$ as corresponding to (the first Chern classes of) the determinant line bundles $\{\det V_i\}_{i=1}^n$, and $\{Q_{is}\}_{i=1,s=1}^{n\ \ \ v_i}$ of $Y$ as corresponding to $\{P_{is}\}_{i=1,s=1}^{n\ \ \ v_i}$. By Proposition \ref{descend}, the descending map between the K\"{a}hler cones of $X$ and $Y$ is given by $Q_{is}\mapsto Q_i (\forall i,s)$.

\par The Poincar\'e pairing on $K_T(X)$ is given by $\langle \mathcal{F},\mathcal{G} \rangle = \chi_T(X, \mathcal{F}\otimes\mathcal{G})$. Up to localization of the coefficient ring, we have the following residue formula of the pairing by fixed point localization.
\begin{proposition} \label{FlagPairing}
\[
\chi_T(X, \mathcal{F}\otimes\mathcal{G}) = \frac{(-1)^{\sum_{i=1}^n v_i}}{\prod_{i=1}^n v_i!}\Res_{P\neq 0,\infty} \frac{\mathcal{F}(P,\Lambda)\mathcal{G}(P,\Lambda) \cdot \prod_{i=1}^n\prod_{r\neq s}^{1\leq r,s\leq v_i}(1-\frac{P_{is}}{P_{ir}})}{\prod_{i=1}^{n-1}\prod_{1\leq r\leq v_{i+1}}^{1\leq s\leq v_i}(1-\frac{P_{is}}{P_{i+1,r}})\cdot\prod_{1\leq r\leq N}^{1\leq s\leq v_n}(1-\frac{P_{ns}}{\Lambda_{r}})}\cdot \frac{dP}{P},
\]
where $\frac{dP}{P} = \wedge_{i,s}\frac{dP_{is}}{P_{is}}$ is a scaling-invariant volume form.
\end{proposition}

\subsection{Fixed points and enlarged torus action} \label{Torus}
\par The $T$-fixed points of $X$ are easily seen to be the standard flags. They can be represented by those sequences of matrices in $R = M_{v_2\times v_1}(\mathbb{C})\times\cdots\times M_{N\times v_n}(\mathbb{C})$ where each matrix is of full rank and has columns chosen from the standard basis vectors $\{\e_1,\cdots,\e_N\}$ of $\mathbb{C}^N$. For instance, when $X = R//G = \text{Flag}(1,2;3)$, the standard flag 
\[
\Span\{\e_1\}\subset \Span\{\e_1,\e_2\} \subset \mathbb{C}^3
\]
has representative
\[
\left(\begin{bmatrix}1 \\ 0\end{bmatrix}, \begin{bmatrix} 1 & 0\\ 0 & 1\\ 0 & 0\end{bmatrix}\right) \in R = M_{2\times 1}(\mathbb{C})\times M_{3\times 2}(\mathbb{C}).
\]
\par On the other hand, $T$-fixed points of $Y$ are usually non-isolated (see Section \ref{specialcasen2}). The stability condition of $Y = R//S$ requires that points in $Y$ have representatives in $R$ being sequences of matrices without all-zero columns, and the quotient by torus $S$ means that two such sequences of matrices represent the same point of $Y$ if they differ by some transformations of the following type:
\begin{itemize}
	\item first scale the $k$-th row of the $v_i\times v_{i-1}$ matrix by $z\in\mathbb{C}^\times$,
	\item then at the same time scale the $k$-th column of the $v_{i+1}\times v_i$ matrix by $z^{-1}$.
\end{itemize}
\par For brevity, in the rest of the paper, we use always the representatives in $R$ (i.e. sequences of matrices) to refer to points in $X$ and $Y$ when no confusion is caused. 

\subsubsection*{The enlarged torus $\wT$}
\par  As preparation for later sections of the paper, we enlarge the action of $T$ on $Y$ to that of a bigger torus
\[
\wT = \prod_{i=2}^{n+1} \C^{v_i}
\]
with equivariant parameters $(\Lambda_{is})_{i=2, s=1}^{n+1\ \ v_i}$. $\widetilde{T}$ acts on $R$ by scaling the rows of the $v_{i}\times v_{i-1}$ matrix with characters $\Lambda_{i,1}, \ldots, \Lambda_{i, v_i}$, and thus on $Y$ as it commutes with that of $S$. We identify $T$ with the last component of $\widetilde{T}$, and thus $\Lambda_s$ of $T$-action with $\Lambda_{n+1,s}$ of $\widetilde{T}$-action, for $s=1,2,\cdots,N$. 

\par Such $\wT$-action does not always descend onto $X$ as the Weyl group $W=S_{v_1}\times\cdots\times S_{v_n}$ does not always commute with $\wT$. Yet, $W$ lies in the normalizer of $\wT$. Its induced action on the characters of $\wT$ is given, once again, exactly by permuting the subscript $s$ in $\Lambda_{is}$ for all $i\neq n+1$. The last component in $\wT$ (which we have identified with $T$) remains untouched, while the first component $S_{v_1}$ in $W$ commutes with $\wT$. In other words, $W/S_{v_1}$ acts effectively by permuting the $\wT/T$-characters.

\par $\widetilde{T}$-fixed points of $Y$ form an isolated subset of the $T$-fixed points. They are represented in $R$ by sequences of matrices where there is exactly one 1 in each column while all other entries vanish. Not all of such points reside in the stable locus for $X$: only those represented by sequences of full-rank matrices do. They are exactly the ones which are also isolated in the $T$-fixed loci of $Y$. When $n=2$ and $v_i = i (i=1,2)$, the following sequence in $R$ gives such a point on $Y$
\[
A = \left(\begin{bmatrix}1 \\ 0\end{bmatrix}, \begin{bmatrix} 1 & 0\\ 0 & 1\\ 0 & 0\end{bmatrix},\begin{bmatrix}1 & 0 & 0 \\ 0 & 1 & 0 \\ 0 & 0 & 1 \\ 0 & 0 & 0 \end{bmatrix}\right).
\]
Geometrically, it represents the standard flag $\Span\{\e_1\}\subset\Span\{ \e_1,\e_{2}\}\subset\C^3$ of $X = \text{Flag}(1,2;3)$. We call such $T$- and $\wT$-fixed points of $Y$ \textbf{non-degenerate}.

\subsubsection*{Non-degenerate fixed points}
\par We may easily generalize our example above to any $X=\text{Flag}(v_1,\cdots,v_n;N)$ and find a ``model'' non-degenerate fixed point $A$, represented in $R$ by the sequence whose $i$-th matrix is a $v_i\times v_i$ identity matrix followed by $v_{i+1}-v_i$ rows of zeros. All other non-degenerate fixed points of $Y$ behave exactly like $A$ up to the Weyl group symmetry, so we focus our attention to $A$ for the rest of the paper. 

\par The tangent space $T_AY$ has distinct $\widetilde{T}$-characters
\[
\frac{\lambda_{ir}}{\lambda_{is}}:= \prod_{a=i+1}^{n+1}\frac{\Lambda_{ar}}{\Lambda_{as}}, \text{   where } 1\leq i\leq n, 1\leq r\leq v_{i+1}, 1\leq s\leq v_{i}, r\neq s. 
\]
Note that for any $i$, $\frac{\lambda_{ir}}{\lambda_{is}}$ descends to the $T$-character $\frac{\Lambda_r}{\Lambda_s}$ as we take $\Lambda_{as}\rightarrow 1$ for all $a\leq n$.

\par We use $\widetilde{T}\rightarrow T$ to indicate the process of setting $\Lambda_{is} = 1 \ (\forall i\leq n)$ (and $\Lambda_{n+1,s}=\Lambda_s$).

\subsection{$\J$-functions} \label{Jfunc}
\par In this section, we recall the general theory of big $\J$-function of a smooth projective variety $M$ endowed with action by torus $T$. For simplicity of notations, we write $K(M)$ for the $T$-equivariant K-ring of $M$ when no ambiguity is caused.

\par Let $\widebar{\mathcal{M}}_{0,m}(M,d)$ be the moduli space of genus-$0$, degree-$d$ stable maps to $M$ with $m$ marked points. There are natural \textbf{evaluation maps}
\[
\ev_i: \M_{0,m}(M,d) \longrightarrow M, \quad\quad (i = 1,2,\cdots,m)
\]
sending each stable map $f: (C;p_1,\cdots,p_m)\rightarrow M$ to $f(p_i)$. There are also line bundles $L_i\ (i = 1,2,\cdots,m)$ over $\M_{0,m}(M,d)$, which are called the \textbf{universal cotangent bundles}, whose fibers over $f$ are exactly the cotangent lines to $C$ at $p_i$. Moreover, there is a natural $S_m$-action on the moduli space by permuting the marked points. Under these notations, fixing a K-theoretic class $a\in K(M)$ and an integer $k$, the genus-0 $T$-equivariant (permutation-invariant) \textbf{K-theoretic GW invariant} (or \textbf{correlator}) is defined as follows:
\[
\langle aL_1^{k}, \cdots, aL_m^{k}\rangle^{S_m}_{0,m,d} := \chi^{S_m}_T\left(\widebar{\mathcal{M}}_{0,m}(M,d); \mathcal{O}^{\virt}_{0,m,d} \otimes \bigotimes_{l=1}^m\ev_l^*(a)L_l^{k}\right),
\]
where $\mathcal{O}^{\virt}_{0,m,d}$ denotes the virtual structure sheaf on $\M_{0,m}(M,d)$ introduced by Lee \cite{Lee} and $\chi^{S_m}_T$ denotes the $S_m$-invariant part of the $T$-equivariant (virtual) holomorphic Euler characteristic for each $l$. By linear extension, we may define correlators for any inputs being Laurent polynomials of the universal cotangent bundles instead of merely the monomial $aL_l^{k}$. Note that we insert the same $a$ and $k$ to all marked points to ensure that the Euler characteristic on RHS is a well-defined $S_m$-representation.

\par The permutation group $S_m$ in the definition above may be replaced by any of its subgroup of the form $H = S_{m_1}\times\cdots\times S_{m_s}$ with $m_1+\cdots+m_s = m$. In such cases, we may loosen our requirements on the inputs $aL_l^{k}$: the insertations $a$ and $k$ need only to be the same for each group of marked points permuted by $H$. In other words, we may define correlators of the form 
\[
\langle a_1L_1^{k_1}, \cdots, a_1L_{m_1}^{k_1}, a_2L_{m_1+1}^{k_2}, \cdots, a_2L_{m_1+m_2}^{k_2},\cdots, a_{s}L_m^{k_s}\rangle_{0,m,d}^H.
\]

\par Let $\gamma_1,\cdots,\gamma_r$ be an effective basis of $H_2(M)$, then any curve class $d\in H_2(M)$ can be represented by an $r$-tuple $d=(d_1,\cdots,d_r)$ where $d = \sum_i d_i\gamma_i, d_i\in\mathbb{Z}$. Let $Q_1,\cdots,Q_r$ be the corresponding Novikov variables, then the curve class $d$ can also be marked with the monomial $Q^d := \prod_{i}Q_i^{d_i}$.

\par Let $\{\phi_\alpha\}$ be an additive basis of $K(M)$ and $\{\phi^\alpha\}$ be its dual basis under the Poincar\'e pairing. Then, as we have briefly mentioned in Introduction, the $T$-equivariant (permutation-invariant) \textbf{K-theoretic big $\mathcal{J}$-function}, or simply the $\mathcal{J}$-function, of $M$ is defined as
\[
\mathcal{J}^M_T(\t;q) = 1 - q + \t(q) + \sum_{m,d,\alpha} Q^d\phi^\alpha\langle\frac{\phi_\alpha}{1-qL_0}, \t(L_1), \cdots, \t(L_n)\rangle^{S_m}_{0,m+1,d},
\]
where $\t = \t(q) = \sum_k \t_kq^k$ is a Laurent polynomial of $q$ with coefficients $\t_k\in K(M)[[Q_1,\cdots,Q_r]]$. We often omit the subscript $T$ when no confusion is caused. 

\par As a matter of fact, the invariants here are with respect to $H = S_m = S_1\times S_m\subset S_{m+1}$ under our notations introduced above, which gives rise to the name ``permutation-invariant''. Taking smaller subgroups of $H$ will lead to other variants of quantum K-theory. The advantage of considering the permutation-invariant theory is two-fold. 
\begin{itemize}
	\item On one hand, the big $\mathcal{J}$-function of the permutation-equivariant correlators satisfies a rather explicit recursive characterization in terms of its residues at certain poles, which facilitates proofs in many cases. In particular, this leads eventually to an explicit reconstruction of its image cone. 
	\item On the other hand, by representation theory of the permutation groups, the ordinary, i.e. non-permutation-equivariant, correlators may be recovered from the permutation-equivariant ones by enlarging the coefficient ring of $K(M)$ \cite{Givental:perm1}.
\end{itemize}

\subsubsection*{Loop space formalism}
\par Denote by
\[
\mathcal{K}_+ = K(M)[[Q_1,\cdots,Q_r]][q,q^{-1}]
\]
the space of Laurent polynomials of $q$ with coefficients in $K(M)[[Q_1,\cdots,Q_r]]$, and by
\[
\mathcal{K} = K(M)[[Q_1,\cdots,Q_r]](q^{\pm 1})
\]
the space of rational functions of $q$ with such coefficients. Then, the big $\mathcal{J}$-function may be regarded as a mapping from the former to the latter. There is a subspace 
\[
\mathcal{K}_- = \{\f\in\mathcal{K}|\f(0)\neq\infty, \f(\infty) = 0\}
\] 
complementary to $\mathcal{K}_+\subset\mathcal{K}$. In fact, it is not hard to check that $\mathcal{K}_{\pm}$ gives a Lagrangian polarization of $\mathcal{K}$ under the symplectic pairing
\[
\Omega(\f,\g) = \Res_{q\neq 0,\infty}\langle \f(q^{-1}), \g(q)\rangle \frac{dq}{q},
\]
where $\langle\cdot,\cdot\rangle$ is the K-theoretic Poincar\'e pairing defined above. Under this polarization, $1-q+\t(q)$ belongs to $\mathcal{K}_+$ while the correlators appearing in $\J(\t)$ lie entirely in $\mathcal{K}_-$. Therefore, the input $\t$ can always be recovered by projecting $\mathcal{J}(\t)$ to $\mathcal{K}_+$.

\par The distinguished value of $\mathcal{J}^M$ at $\t = 0$ is called the \textbf{small $J$-function}. More explicitly,
\[
J^X = \mathcal{J}^M(0;q) = 1 - q + \sum_{d,\alpha} Q^d\phi^\alpha\langle\frac{\phi_\alpha}{1-qL_0}\rangle_{0,1,d}.
\]
Note that the $S_m$-action no longer persists here, since its effect is entirely on permuting the inputs $\t$ in the correlators which now vanish completely. In other words, the small $J$-function defined above lives also in the ordinary (i.e. non-permutation-equivariant) quantum K-theory of $M$. 

\par The image $\mathcal{L}^M$ of $\mathcal{J}^M$ resides in the loop space $\mathcal{K}$. According to the axiomatic properties of the correlators \cite{Givental:perm7}, $\mathcal{L}^M$ enjoys nice geometric properties, as described in the proposition below.
\begin{proposition} \label{cone}
$\mathcal{L}^M\subset(\mathcal{K},\Omega)$ is an overruled Lagrangian cone. Moreover, the ruling space $L$ passing through the point $p\in\L$ takes exactly the form
\[
L = (1-q)T_p\L.
\]
In other words, the tangent space $T_p\L$ is tangent to $\L$ exactly along $(1-q)T_p\L$.
\end{proposition}
For convergence reasons, we are interested only in the formal germ of $\mathcal{L}^M$ at $(1-q)$. That is to say, we require the inputs $\t$ of the big $\J$-function to be small with respect to a certain adic topology defined on the coefficient ring $\Lambda = K(M)[[Q_1,\cdots,Q_r]]$. Alternatively, we require that $\t\in\Lambda_+$ for a certain ideal $\Lambda_+\subset\Lambda$ preserved by the Adams operations $\Psi^k$ on $\Lambda$, i.e. satisfying $\Psi^k(\Lambda_+)\subset\Lambda_+$. This will be discussed in more detail later.
\subsubsection*{Euler-type twisted theories}
\par Various twisted big $\J$-functions are extensively used in this paper. The twisted big $\J$-functions are defined as generating functions of correlators defined with respect to certain modified virtual structure sheaves
\[
\mathcal{O}^{\text{virt}}_{0,m,d}\longmapsto \mathcal{O}^{\text{virt}}_{0,m,d} \ \otimes \ \mathcal{E}_{0,m,d}.
\]
One subtlety that is worth mentioning is that $\phi^\alpha$ as appearing in the definition of $\mathcal{J}^M$ would gain an extra factor $\mathcal{E}_{0,3,0}$, as it is dual to $\phi_\alpha$ by definition under the twisted Poincar\'e pairing.

\par For now, we introduce only the \textbf{Euler-type} twisting, examples of which we have seen in Introduction, and save other types until necessary. Let $E$ be any vector bundle over $M$. For simplicity of notations, we define 
\[
M_{0,m,d} = \M_{0,m}(M,d),\quad\quad E_{0,m,d} := \ft_*\ev^* E,
\] 
where $\ft$ and $\ev$ refer respectively to the forgetting and evaluation map of the last marked point from the universal curve $\M_{0,m+1}(M,d)$ as below.
\begin{equation*}
\xymatrix{
	\overline{\mathcal{M}}_{0,m+1}(M,d)\ \ar^{\quad\quad \ev}[r]\ar^{\ft}[d] & \ M \\
	\overline{\mathcal{M}}_{0,m}(M,d)\ & 
}
\end{equation*}
Then, the \textbf{$(\Eu,E)$-twisted big $\J$-function} of $M$ is defined by taking $\mathcal{E}_{0,m,d} = E_{0,m,d}$ and is denoted by $\J^{M,(\Eu,E)}$. The image $\L^{M,(\Eu,E)}$ of $\J^{M,(\Eu,E)}$ is still an overruled cone.

\par The special case in Introduction where $M$ is taken as the abelian quotient $Y$ and $E$ is taken as the bundle
\[
y^{-1}\mathfrak{g}/\mathfrak{s} = y^{-1}\bigoplus_{i}\bigoplus_{r\neq s}\frac{P_{ir}}{P_{is}}
\]
plays an important role in our strategy of abelian/non-abelian correspondence. We spend a few words to explain the notation employed here. $\mathfrak{g}$ denotes the Lie algebra of the group $G$ appearing in the GIT construction of $X = \text{Flag}(v_1,\cdots, v_n; N)$, and $\mathfrak{s}$ the Lie algebra of the maximal torus $S$. $\mathfrak{g}/\mathfrak{s}$ is then a natural representation of $S$ under the adjoint action, and thus gives rise to a vector bundle over $Y$, which we denote still by $\mathfrak{g}/\mathfrak{s}$, as is shown below.
\begin{equation*}
\xymatrix{
	\mathfrak{g}/\mathfrak{s} = & (\mathfrak{g}/\mathfrak{s}\times R)//S \ar[d] \\
	Y = & R//S
}
\end{equation*}
The direct sum is over all $i$ and $r\neq s$, which means more precisely over
\[
\{(i,r,s)| 1\leq i\leq n, 1\leq r,s\leq v_i, r\neq s\}.
\]
This specific index set will appear repeatedly later, and we use the above notation as convention. 

\par Recall $y$ is the auxiliary equivariant parameter of fiber-wise $\C^\times$-action as explained earlier. More precisely, let $\mathbb{C}_{y}$ be the trivial bundle over $Y$ endowed with the standard $\mathbb{C}^\times$-action on each fiber. Then $y^{-1}\mathfrak{g}/\mathfrak{s} = (\mathbb{C}_{y})^\vee \otimes \mathfrak{g}/\mathfrak{s}$. Eventually we will take $y=1$.

\par For brevity, we denote by $\mathcal{J}^{tw,Y} = \J^{Y,(\Eu, y^{-1}\mathfrak{g}/\mathfrak{s})}$ the big $\mathcal{J}$-function of $Y$ twisted \textit{in this specific way}, and by $\mathcal{L}^{tw,Y}$ its image cone. As we will see later, $\L^X$ is recovered essentially from the Weyl-group-invariant part of $\L^{tw,Y}$, by specializing $y$ and Novikov variables.

\subsection{Fixed point localization and recursive characterization} \label{subsecRecursive}
\par The idea of using recursive relations to characterize big $\mathcal{J}$-functions goes back to the paper \cite{Jeff_Brown} for quantum cohomology. Its K-theoretic version, developed in \cite{Givental:perm2}, provides us with a direct way of checking whether a $q$-rational function in $\mathcal{K}$ lies on the image cone of a certain big $\J$-function based on fixed point localization. The criterion works for target spaces equipped with a torus action with isolated fixed points and connecting orbits. 

\par Let $M$ be such a target space under the action of $T$ and $Q_1,\cdots,Q_r$ be the Novikov variables as above. Denote by $\mathcal{F}$ its fixed point set. The fixed point classes $\{\phi^a\}_{a\in\mathcal{F}}$ form an additive basis of $K(M)$, so any $q$-rational function $\f$ with coefficients in $K(M)[[Q_1,\cdots,Q_r]]$ may be expanded as 
\[
\f = \sum_{a\in\mathcal{F}}\f_a\phi^a.
\] 
The criterion applies to these specialization coefficients $\f_a$, which are the images of $\f$ under restriction $K(M)\rightarrow K(a)$. For example, in our case of flag variety $X$ (resp. abelian quotient $Y$), $\f_a$ is obtained from $\f$ by replacing all tautological bundles $P_i$ (resp. $P_{is}$) by $P_{i}|_a$ (resp. $P_{is}|_a$).
\begin{criterion} \label{criterion}
$\f\in\mathcal{K}$ represents a value of $\J^M$ if and only if $\forall a\in\mathcal{F}$, the two conditions below hold
\begin{itemize}
	\item[(i)] $\f_a$, when expanded at its poles at roots of unity, lies in $\mathcal{L}^{pt}$, the image cone of the (permutation-invariant) big $\J$-function of a point, but with coefficient ring $K_T(a)[[Q_1,\cdots,Q_r]]$.
	\item[(ii)] Outside $0,\infty$ and roots of unity, $\f_a$ has poles only at values of the form $\lambda^{1/m}$, for $\lambda$ a $T$-character of the tangent space $T_aM$ and $m$ a positive integer. Moreover, the residues satisfy
	\[
	\Res_{q = \lambda^{1/m}}\f_a(q)\frac{dq}{q} = - \frac{Q^{mD}}{m}\frac{\Eu(T_aM)}{\Eu(T_\phi M_{0,2,mD})} \f_b(\lambda^{1/m}).
	\]
    Here $b$ is (only) other fixed point on $M$ in the closure of the (only) 1-dim $T$-orbit with tangent character $\lambda$ at $a$, $D\in H_2(M)$ is the degree of this orbit $ab\simeq\mathbb{C}P^1$, and $\phi$ is $T$-fixed element in $M_{0,2,mD}$ given by the $m$-sheet covering map $\mathbb{C}P^1\rightarrow ab \hookrightarrow M$ ramified at the two marked points which are sent exactly to $a$ and $b$ respectively.
\end{itemize}
\end{criterion}
Intuitively, Condition (i) controls the behavior of $\f_a$ near the poles at roots of unity, which are the only legitimate poles arising in the fixed-point localization computations of correlators of point-target space, and it usually not hard to check; Condition (ii) controls the behavior near all other poles. Note that Condition (i) does not involve any additional choices depending on $M$, so the image cone of the big $\J$-function is, in some sense, determined completely by the recursion coefficients in Condition (ii), which in turn reply only on the $T$-equivariant geometry of $M$, i.e. the torus fixed points on $M$ and their connecting 1-dimensional orbits.

\par The Criterion above is a major tool of the paper.

\par It is called \textbf{recursive} characterization because $Q^{mD}$ appears in the coefficient on RHS of Condition (ii). It means that we compare always (the residues of) the $Q^d$-term in $\f|_a$ with the $Q^{d-mD}$-term in $\f|_b$. In other words, the recursion is on the homological degrees.
\begin{remark} \label{twistedrecursion}
If we are to characterize twisted theories, the recursion coefficients in Condition (ii) will need to ``twist'' accordingly. For example, if the twisting is given by $(\Eu,E)$, i.e. of the Euler-type that we introduced before, we will need $\f_a$ to satisfy
\[
\Res_{q = \lambda^{1/m}}\f_a(q)\frac{dq}{q} = -\frac{Q^{mD}}{m}\frac{\Eu(T_aM)}{\Eu(E)|_a}\frac{\Eu(E_{0,2,mD})|_\phi}{\Eu(T_\phi M_{0,2,mD})}\f_b(\lambda^{1/m}).
\]
\end{remark}

\subsection{Invariance under pseudo-finite-difference (PFD) operators} \label{subsecPFD}

\par We assume further that $H^2(M;\mathbb{Z})$ admits a basis $\{c_1(P_i)\}_{i=1}^r$ for line bundles $P_i$ over $M$ (this is true for the flag varieties and the abelian quotients), and that $\{Q_i\}_{i=1}^r$ are dual exactly to this basis. Then, $\L^M$ has the following $\mathcal{D}_q$-module property \cite{Givental:perm8}, mentioned already in the Introduction. It will be used repeatedly in various cases in this paper.
\begin{lemma} \label{pfdpreserve}
The image cone $\L^M$ of $\J^M$ is preserved by the group generated by operators of the form
\[
O_1 = e^{D(Pq^{Q\partial_Q}, Q, q)}
\]
and the form
\[
O_2 = e^{\sum_{k>0}\frac{\Psi^k(D(Pq^{kQ\partial_Q}, Q, q))}{k(1-q^k)}}.
\]
Here $D$ is any Laurent polynomial (with coefficients in $\mathbb{C}$), and 
\[
Pq^{Q\partial_Q} = (P_1q^{Q_1\partial_{Q_1}}, \cdots, P_rq^{Q_r\partial_{Q_r}}),\ Q = (Q_1,\cdots,Q_r).
\]
\end{lemma}
By definition,
\[
P_iq^{Q_i\partial_{Q_i}}\cdot Q^d = P_iq^{d_i}\cdot Q^d;
\]
$\Psi^k$, the Adams operation, acts by $\Psi^k(Q) = Q^{|k|}$, $\Psi^k(q) = q^k$ and $\Psi^k(P_iq^{kQ_i\partial_{Q_i}}) = P_i^kq^{kQ_i\partial_{Q_i}}$. We denote by $\mathcal{P}$ the group generated by operators of the form $O_1$ and $O_2$, and call elements of $\mathcal{P}$ the \textbf{pseudo-finite-difference (PFD)} operators.

\par The lemma is originally proved using the so-called adelic characterization which relates K-theoretic big $\J$-functions to cohomological ones. Due to the divisor axiom of (cohomological) GW invariants, the ruling spaces of the image cone of the cohomological big $\mathcal{J}$-function are modules over the algebra of differential operators of the form $D(zQ\partial_Q - p, Q, z)$, with $z = \log q$, $p$ divisor classes and $Q$ Novikov variables. Such operators become the PFD operators as above under the adelic characterization, so Lemma \ref{pfdpreserve} may in fact be regarded as the (genus-zero) reincarnation of the divisor axiom under K-theoretic settings. For more details one is referred to \cite{Givental-Tonita} for the ordinary quantum K-theory and to \cite{Givental:perm8}\cite{Givental:perm10} for the permutation-equivariant quantum K-theory. 

\par An alternative proof may be given using the recursive characterization in Section \ref{subsecRecursive}. It suffices to show that for any $F\in\mathcal{P}$, $F\cdot\f$ will satisfy Criterion \ref{criterion} as long as $\f$ does. We provide only the details of verification of Condition (ii) here, in order to avoid unnecessary digression into the structure of $\L^{pt}$ involved in Condition (i), which is not truly related to the main topic of this paper. 

\par Indeed, it suffices to consider the case of $F = P_i^k q^{kQ_i\partial_{Q_i}}$ for some $1\leq i\leq r$ and $k\in\mathbb{Z}$, as these are the only ingredients of $O_1$ and $O_2$ that may possibly have different effect on LHS and RHS of Condition (ii). Suppose $\f = \sum_{d}Q^d \cdot\f_d$ satisfies already the recursion. We have
\[
\Res_{q = \lambda^{1/m}}Q^{d+mD}\cdot \f_{d+mD}|_a(q)\frac{dq}{q} = - \frac{Q^{mD}}{m}\frac{\Eu(T_aM)}{\Eu(T_\phi M_{0,2,mD})} Q^d\cdot \f_d|_b(\lambda^{1/m}),
\]
for any $d = (d_1,\cdots,d_r)$. Now $F\cdot\f = \sum_{d}Q^d\cdot P_iq^{d_i} \f_d$. Since $P_iq^{d_i}$ has no pole at $q = \lambda^{1/m}$, compared to $\f$, $F\cdot\f$ obtains an extra factor of $P_i|_{a} (\lambda^{1/m})^{d_i+mD_i}$ on LHS of the recursion, while an extra factor of $P_i|_{b} (\lambda^{1/m})^{d_i}$ on RHS. Recall that the 1-dim $T$-orbit $ab$ has homological degree $D = (D_1,\cdots,D_r)$ under the basis $\{c_1(P_i)\}_{i=1}^r$, which means $P_i$ restricts to $\mathcal{O}(-D_i)$ on $ab$. Since $\lambda$ is the tangent character of $ab$ at $a$, we have $P_i|_a = P_i|_b\cdot \lambda^{-D_i}$. Therefore, the two factors on LHS and RHS are the same for $F\cdot\f$, which proves it satisfies the same recursive equation as $\f$.
\begin{remark} \label{remark2}
It is not hard to see that the invariance with respect to PFD operators holds true for twisted big $\J$-functions of $M$ as well, by the same proof.
\end{remark}

\section{Non-abelian localization} \label{NonAbelLoc}
\par We restrict our consideration back to the flag variety $X$ and its associated abelian quotient $Y$. In this section and Section \ref{NonisoRec}, we will prove Theorem \ref{2} below, which gives us exactly the first half of the central Theorem \ref{0} (see the strategy of proving Theorem \ref{0} in Section \ref{subsec11}) as we take $\wT\rightarrow T$. The second half will be carried out in Section \ref{recon}. 
\begin{theorem} \label{2}
Elements in the $\mathcal{P}^W$-orbit of the function
\[
J^{tw,Y} = (1-q)\sum_{d\in \mathcal{D}} \prod Q_{is}^{d_{is}}\frac{\prod_{i=1}^n\prod_{r\neq s}^{1\leq r,s\leq v_i}\prod_{l=1}^{d_{is}-d_{ir}}(1-y\frac{P_{is}}{P_{ir}}q^l)}{\prod_{i=1}^n\prod_{1\leq r\leq v_{i+1}}^{1\leq s\leq v_i}\prod_{l=1}^{d_{is}-d_{i+1,r}}(1-\frac{P_{is}}{P_{i+1,r}\Lambda_{i+1,r}}q^l)},
\]
all locate on the image cone $\mathcal{L}^{tw,Y}$ (as described in Section \ref{Jfunc}) of the ($\widetilde{T}$-equivariant) big $\mathcal{J}$-function of $Y$ twisted by $(\Eu, y^{-1}\mathfrak{g}/\mathfrak{s})$. Moreover, they descend to points on the image cone $\mathcal{L}^X$ of the ($T$-equivariant) big $\mathcal{J}$-function of $X$, under the specialization $\widetilde{T}\rightarrow T, Q_{is}\rightarrow Q_i, y\rightarrow 1$.
\end{theorem}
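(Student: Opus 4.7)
The plan is to verify the two assertions separately via the adelic/recursive characterization of points on the twisted cone $\mathcal{L}^{tw,Y}$. Because the enlarged torus $\widetilde{T}$ makes all fixed points of $Y$ isolated, I would first localize $J^{tw,Y}$ to each such fixed point and then compare the resulting rational function of $q$ with the constraints imposed by $\mathcal{L}^{tw,Y}$: at every $\widetilde{T}$-fixed point the localization must be a small $\mathcal{K}_-$-perturbation of an input in $\mathcal{K}_+$, and the residues at poles $q=\lambda^{1/l}$ along each isolated $1$-dimensional $\widetilde{T}$-orbit must match the recursion contributions prescribed by adjacent fixed points. The hypergeometric form of $J^{tw,Y}$ makes both the pole structure and the residues completely explicit.

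Next I would execute the recursion check fixed point by fixed point. At the non-degenerate fixed point $A$ of Section~\ref{Torus}, the poles of $J^{tw,Y}|_A$ come from the factors $(1-\frac{P_{is}}{P_{i+1,r}\Lambda_{i+1,r}}q^l)$ in the denominator, and each such pole corresponds to an $l$-fold cover of the $1$-dimensional $\widetilde{T}$-orbit connecting $A$ to an adjacent fixed point $B$ obtained by permuting a row. I would check term by term that the residue equals $J^{tw,Y}|_B$ times the characteristic recursion factor predicted by the twisting $(\Eu,\bigoplus_{i,r\neq s}\frac{P_{ir}}{P_{is}}y^{-1})$ together with the $\widetilde{T}$-weights on $T_BY$, which is a direct generalization of the grassmannian computation of \cite{Givental-Yan} with combinatorics dictated by the tower structure of $Y$. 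Since $J^{tw,Y}$ is manifestly $W$-invariant and operators in $\mathcal{P}^W$ preserve the cone \cite{Givental:perm8}\cite{Givental:perm10}, the entire orbit $\mathcal{P}^W\cdot J^{tw,Y}$ then lies on $\mathcal{L}^{tw,Y}$.

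The second half is the descent to $\mathcal{L}^X$. Here I would apply the non-abelian localization strategy of \cite{Givental-Yan}: under the specialization $\widetilde{T}\to T$, $Q_{ij}\to Q_i$, $y\to 1$, the $W$-invariant localizations at the $\widetilde{T}$-fixed points of $Y$ should reassemble, fixed point by fixed point of $X$, into expressions satisfying the recursion characterization of $\mathcal{L}^X$. The twisting $(\Eu,\bigoplus_{r\neq s}\frac{P_{ir}}{P_{is}}y^{-1})$ is designed precisely so that the ``extra'' tangent weights present on $T_AY$ but not on $T_pX$ (for $p$ the image of $A$) are cancelled by the Euler factor in the twisted Poincar\'e pairing, realizing the K-theoretic analog of Martin's formula.

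The main obstacle is controlling the recursion at points of $Y$ which are fixed by $T$ but \emph{not} by $\widetilde{T}$, i.e.\ along the non-isolated $T$-fixed components where several $\widetilde{T}$-fixed points collide under $\Lambda_{ij}\to 1$. A naive residue computation picks up spurious contributions from $1$-dimensional $T$-orbits that lie entirely inside such a non-isolated locus and that correspond to no genuine recursion datum on $X$. I plan to prove the vanishing result advertised in the abstract: a $W$-invariant cancellation forces the total recursion contribution from $1$-dimensional $T$-orbits internal to a non-isolated component to vanish, so that only the ``external'' orbits, which descend to honest $1$-dimensional $T$-orbits on $X$, survive the $\widetilde{T}\to T$ limit. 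Once this vanishing is established, matching the surviving recursion coefficients against those prescribed by $\mathcal{L}^X$ reduces to hypergeometric bookkeeping.
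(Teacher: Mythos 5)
Your plan for the first half is essentially the paper's: you localize at $\widetilde{T}$-fixed points (isolated by construction of the enlarged torus), check the recursion condition at poles $q=\lambda^{1/m}$ fixed point by fixed point, and then propagate to the $\mathcal{P}^W$-orbit because pseudo-finite-difference operators preserve the cone. The paper builds $J^{tw,Y}$ from the toric $J^Y$ by Gamma operators and checks that the operator action reproduces the twisting's effect on recursion coefficients, which is the same content as your ``term by term'' verification of residues.

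There is, however, a genuine gap in your proposed mechanism for the descent. You name the correct obstacle --- spurious recursion contributions coming from $\widetilde{T}$-fixed points that collapse, under $\widetilde{T}\to T$, onto a non-isolated $T$-fixed component --- but you attribute their cancellation to ``a $W$-invariant cancellation.'' That is not why they vanish, and $W$-invariance alone cannot make them vanish: the Weyl group does not stabilize the distinguished fixed point $A$, so $W$-invariance of $J^{tw,Y}$ is a constraint relating the localizations at \emph{different} fixed points, not a cancellation among the distinct tangent directions at a \emph{single} $A$ that all descend to the same $T$-character $\Lambda_r/\Lambda_s$. The paper's actual mechanism is quantitatively different: one completes the degenerate orbit (say $AE_n$) into a compact moduli space $\mathcal{M}_n$ of $T$-invariant \emph{balanced broken orbits} emanating from $A$; one then shows, by a careful accounting of vertex and edge contributions for each Chern root of the twisting bundle and of $TY$, that the sum of recursion coefficients over the $\widetilde{T}$-fixed points of $\mathcal{M}_n$ assembles into a $\widetilde{T}$-equivariant fixed-point localization of a holomorphic Euler characteristic $\chi(\mathcal{M}_n,(1-y)f(P_1,\dots,P_{n-1}))$; the explicit overall factor $(1-y)$ comes from the Eulerian twist $(\Eu,\bigoplus_{i,r\neq s}P_{ir}P_{is}^{-1}y^{-1})$ (concretely from the $i=n$, $r=2$, $s=1$, $l=m$ term), and this is what kills the contribution as $y\to1$. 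A second step, using a composition law for recursion coefficients and a sequence of identities over nested moduli $\mathcal{M}_{C,k}$, then shows that $J^X_C$ has no spurious pole at $(\Lambda_2/\Lambda_1)^{1/m}$ and that the surviving recursion at $A$ is exactly the one along $AC$. Without the explicit $(1-y)$ factor and the balanced-broken-orbit compactification, the cancellation you want does not follow; these are precisely the new tools the theorem requires beyond the grassmannian case.

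Also a smaller clarification: the offending $1$-dimensional orbits are not ``internal to a non-isolated component'' --- they run from the isolated point $A$ \emph{into} the non-isolated $T$-fixed locus; it is their \emph{sum over the compactified family}, not each one separately, that vanishes.
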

Here as convention we define $d_{n+1,r} = 0$ and $P_{n+1,r}=1$ for any $r$. Recall that the action of the Weyl group $W$ on $\mathcal{P}$ is given by permuting $s$ in not only the tautological bundles $P_{is}$ of $Y$ and their associated Novikov variables $Q_{is}$, but the $\widetilde{T}$-characters $\Lambda_{is}$ (or more precisely $\wT/T$-characters, as explained in Section \ref{Torus}) as well. Yet it suffices to consider operators in $P$ that are independent of $\Lambda_{is}$ for $i\leq n$ as eventually they reduce to $1$ as $\wT\rightarrow T$.

\subsection{Structure of proof} \label{StructureProofThm2}

\par Below we briefly summarize the structure of our proof of Theorem \ref{2} and motivate the non-isolated version of recursive characterization promised in Introduction.

\subsubsection*{Reduction to $J^{tw,Y}$}

\par To prove that the $\mathcal{P}^W$-orbit of $J^{tw,Y}$ lies entirely in $\mathcal{L}^{tw,Y}$, we need only to verify that $J^{tw,Y}$ satisfies the two conditions associated to $\mathcal{L}^{tw,Y}$ as in Criterion \ref{criterion}. Indeed, this will imply that $J^{tw,Y}$ itself lies in $\mathcal{L}^{tw,Y}$, and thus so does the rest of its $\mathcal{P}^W$-orbit by Lemma \ref{pfdpreserve} (and Remark \ref{remark2}).

\subsubsection*{Condition (i) of $\mathcal{L}^{tw,Y}$}
\par Since this condition is not really the focus of this paper, in order to save ourselves from getting lost in details, we adopt a ``shortcut method'' and start directly from the known result \cite{Givental:perm5} from quantum K-theory of toric varieties, which tells us the ($\wT$-equivariant) small $J$-function of the toric variety $Y$, though not yet twisted by $(\Eu,y^{-1}\mathfrak{g}/\mathfrak{s})$,
\[
J^Y = (1-q)\sum_{d\in \mathcal{D}} \prod_{i=1}^n\prod_{s=1}^{v_i} Q_{is}^{d_{is}} \cdot \frac{1}{\prod_{i=1}^n\prod_{1\leq r\leq v_{i+1}}^{1\leq s\leq v_i}\prod_{l=1}^{d_{is}-d_{i+1,r}}(1-\frac{P_{is}}{P_{i+1,r}\Lambda_{i+1,r}}q^l)}.
\]
Our $J^{tw,Y}$ differs from $J^Y$ only by the numerators, which can be produced by certain $q$-gamma operators. More precisely,
\[
J^{tw,Y} = \prod_{i=1}^n\prod_{r\neq s}(\Gamma^i_{r,s})^{-1}\cdot J^Y, 
\quad \text{  for }\ 
\Gamma^i_{r,s} = \exp{\left(\sum_{k>0}\frac{q^k}{k(1-q^k)}\left(y\frac{P_{is}}{P_{ir}}\right)^k\left(1-q^{kQ_{is}\partial_{Q_{is}} - kQ_{ir}\partial_{Q_{ir}}}\right)\right)}.
\]
Here $(\Gamma^i_{r,s})^{-1}$ is in fact the asymptotic expansion (near the unit circle on the $q$-plane) of the ratio
\[
\frac{\prod_{l=-\infty}^0(1-y\frac{P_{is}}{P_{ir}}\cdot q^{Q_{is}\partial_{Q_{is}} - Q_{ir}\partial_{Q_{ir}}}\cdot q^l)}{\prod_{l=-\infty}^0(1-y\frac{P_{is}}{P_{ir}}\cdot q^l)},
\]
and thus acts by
\[
(\Gamma^i_{r,s})^{-1} \cdot Q^d 
= \frac{\prod_{l=-\infty}^0(1-y\frac{P_{is}}{P_{ir}}\cdot q^{d_{is}-d_{ir}}\cdot q^l)}{\prod_{l=-\infty}^0(1-y\frac{P_{is}}{P_{ir}}\cdot q^l)} \cdot Q^d 
= \prod_{l=1}^{d_{is}-d_{ir}}(1-y\frac{P_{is}}{P_{ir}}\cdot q^l) \cdot Q^d
\]
These $q$-gamma operators are not the PFD operators that we introduced earlier, and thus does not preserve $\L^{tw,Y}$, but they are known to preserve $\L^{pt}$ \cite{Givental:perm4}, which proves $J^{tw,Y}\in \L^{pt}$ from $J^{Y}\in\L^{pt}$.

\subsubsection*{Condition (ii) of $\mathcal{L}^{tw,Y}$}
\par For simplicity, we still start with $J^Y$ and assume that it satisfies the recursive relations demanded by the untwisted $\L^Y$, which may be derived from either toric theory \cite{Givental:perm5} or brute-force computation. It can be proven then, that the effect on the recursive relations of those $q$-gamma operators producing $J^{tw,Y}$ fills exactly in the gap between $\L^Y$ and $\L^{tw,Y}$. We work out the details in Section \ref{JonJ}.

\subsubsection*{Passing from $\mathcal{L}^{tw,Y}$ to $\L^X$}
\par Having known that the $\mathcal{P}^W$-orbit of $J^{tw,Y}$ lies entirely on $\mathcal{L}^{tw,Y}$, we are now ready to show that they descend to points on (the $T$-equivariant) $\L^X$ as we specialize $\widetilde{T}\rightarrow T, Q_{is}\rightarrow Q_i, y\rightarrow 1$. 

\par We start by clarifying what we mean by ``descend''. Recall that the $\L^X$ is a Lagrangian cone in the loop space $\mathcal{K}^X$ of $q$-rational functions with coefficients in $K_T(X)[[\{Q_i\}_{i=1}^n]]$, while $J^{tw,Y}$ and the elements in its $\mathcal{P}^W$-orbit are \emph{a priori} $q$-rational functions with coefficients in $K_{\wT\times \C_{y}}(Y)[[\{Q_{is}\}_{i=1, s=1}^{n \quad v_i}]]$. Now, since $J^{tw,Y}$ and thus all elements in its $\mathcal{P}^W$ are $W$-invariant, under the specialization $\widetilde{T}\rightarrow T, Q_{is}\rightarrow Q_i, y\rightarrow 1$, they become $q$-rational functions with coefficients in $K_T(Y)^W[[\{Q_i\}_{i=1}^n]]$, where $K_T(Y)^W$ refers to the $W$-invariant part of $K_T(Y)$. Therefore, they may be regarded as points in $\mathcal{K}^X$ by identifying $K_T(Y)^W$ with $K_T(X)$ in the sense elucidated in the diagram of Section \ref{Kring}. That is to say, we regard $P_{is}$ as K-theoretic Chern roots of the tautological bundles $V_i$ over $X$.

\par Then we prove that the $\mathcal{P}^W$-orbit of $J^{tw,Y}$ descend not simply to $\mathcal{K}^X$ but actually to the cone $\L^X$. Once again, by invariance of the cone under PFD operators (Lemma \ref{pfdpreserve}) and the recursive characterization (Criterion \ref{criterion}), it boils down to verifying that $J^{tw,Y}$ satisfies the Conditions (i) and (ii) demanded by $\L^X$.

\subsubsection*{Condition (i) of $\mathcal{L}^{X}$}
\par There is not much work to do for this part. In fact, Condition (i) for $\L^X$ differs from that of $\L^{tw,Y}$ only by the coefficient ring of the involved $\L^{pt}$, which poses no problem for $W$-invariant expressions like $J^{tw,Y}$ as explained above.

\subsubsection*{Condition (ii) of $\mathcal{L}^{X}$}
\par Finally, this part is where all the complicated work comes in. We prove that for any $T$-fixed point $a$ in $X$, any of its tangent character $\lambda$, and any positive integer $m$, $J^{tw,Y}|_a$ under the specialization $\widetilde{T}\rightarrow T, Q_{is}\rightarrow Q_i, y\rightarrow 1$ has only simple poles at $\lambda^{1/m}$, and it satisfies 
\[
\Res_{q = \lambda^{1/m}}\f_a(q)\frac{dq}{q} = \coeff^X_{ab}(m) \cdot \f_b(\lambda^{1/m}) := - \frac{Q^{mD}}{m}\frac{\Eu(T_aX)}{\Eu(T_\phi X_{0,2,mD})} \cdot \f_b(\lambda^{1/m}),
\]
where $b$ is the (only) $T$-fixed point of $X$ in the closure of the (only) 1-dim $T$-orbit with tangent character $\lambda$ at $a$, $D = (D_1,\cdots,D_n)$ is the homological degree of the orbit closure $ab\simeq \C P^1$, and $\phi$ is the $m$-sheet ramified covering to $ab$ with two markings.
\begin{remark}
One may realize that under the specialization $\widetilde{T}\rightarrow T, Q_{is}\rightarrow Q_i, y\rightarrow 1$, $J^{tw,Y}$ becomes exactly the expression $J^X$ in Corollary \ref{MainCor}. However, it is hard to verify directly that it satisfies the above Condition (ii) of $\L^X$, because individual summands of $J^X|_a$ by themselves, as we will see later, may contain higher poles at $\lambda^{1/m}$ for certain tangent characters $\lambda$, and may even contain poles at $(\lambda')^{1/m}$ for characters $\lambda'$ non-existent in $T_aX$. As a by-product of our derivation in Section \ref{NonisoRec}, we will see that these seemingly redundant poles come from distinct simple poles of $J^{tw,Y}$, and eventually cancel out when summed together under the specialization $\widetilde{T}\rightarrow T, Q_{is}\rightarrow Q_i, y\rightarrow 1$. Only ``legitimate'' simple poles at $\lambda^{1/m}$ persist.
\end{remark}

\par Instead of direct verification, since $J^X$ is reduced from $J^{tw,Y}$, an alternative way is to prove that the recursive formulae of $\L^X$ are also reduced from those of $\L^{tw,Y}$. Given the $T$-fixed point $a\in X$, we fix a lift of it in $Y$ which is a non-degenerate fixed point (see Section \ref{Torus}). This lift, which we still denote by $a$, is highly non-unique, but the non-uniqueness does not affect the entire derivation that follows.

\par Among all the possible different lifts of $b\in X$ adjacent to $a$ (connected to $a$ through a 1-dim orbit $ab\simeq \C P^1$), there is only one adjacent to the lift of $a$ above, and we call this specific lift $b$ as well. We denote by $\widetilde{\lambda}$ the tangent character at $a$ along the lifted 1-dim $T$-orbit in $Y$, then it is not hard to see $\widetilde{\lambda}\rightarrow\lambda$ as $\wT\rightarrow T$. The recursive formula associated to $\L^{tw,Y}$ along $ab$ in $Y$ characterizes the residues of at poles $\widetilde{\lambda}^{1/m}$
\[
\Res_{q = \widetilde{\lambda}^{1/m}}\f_a(q)\frac{dq}{q} = \coeff^{tw,Y}_{ab}(m)\cdot\f_b(\widetilde{\lambda}^{1/m}) := -\frac{Q^{mD}}{m}\frac{\Eu(T_aY)}{\Eu(y^{-1}\mathfrak{g}/\mathfrak{s})|_a}\frac{\Eu((y^{-1}\mathfrak{g}/\mathfrak{s})_{0,2,mD})|_\phi}{\Eu(T_\phi Y_{0,2,mD})} \f_b(\widetilde{\lambda}^{1/m}).
\]
Here $D = (D_{is})_{i=1,s=1}^{n \quad v_i}$ is the homological degree of $ab$ in $Y$ and thus has a slightly different meaning from the $D$ appearing in the recursive formulae of $\L^X$, but we will see that they equal as we take $Q_{is}=Q_i$. Similarly, $\phi$ here means the ramified $m$-sheet covering to $ab$ in $Y$. We will see that the recursive coefficient here reduces to that of $X$ under the specialization $\widetilde{T}\rightarrow T, Q_{is}\rightarrow Q_i, y\rightarrow 1$. The details are carried out in Section \ref{IsoRec}. 

\par The above deduction seems to prove what we need for $J^X$, but in fact it is not enough! In most cases, $\widetilde{\lambda}$ is not the only $\widetilde{T}$-character of $T_aY$ that is reduces to $\lambda$ as $\wT\rightarrow T$. In other words, $J^{tw,Y}_a$ may have simple poles at many other values which would collapse to the pole at $\lambda^{1/m}$ of $J^X_a$. When we check that the residue of $J^X_a$ at $q = \lambda^{1/m}$ satisfies the above recursive relation, we should take all these contributions into consideration. It is also the reason why individual summands of $J^X_a$ may have higher poles at certain values $q = \lambda^{1/m}$. Our proof will imply that these higher poles must cancel and that $J^X_a$ has only simple poles at $\lambda^{1/m}$. 

\par To facilitate the proof that follows, without loss of generality, we focus starting from now only on the case where $a$ is taken as the ``model'' non-degenerate fixed point $A\in X = R//G$ (introduced in Section \ref{Torus}), which is the standard flag whose $i$-th space is $\Span\{\e_1,\cdots,\e_{v_i}\}$. Following our convention of notation in Section \ref{Torus}, $A$ is also used to refer to its standard lifts in $Y = R//S$ and in $R$. By the Weyl-group-symmetry, the cases of other fixed points in $X$ are equivalent to that of $A$. 

\par Recall that the tangent space $T_AX$ has distinct $T$-characters $\frac{\Lambda_r}{\Lambda_s}$, for $(i,r,s)\in \mathfrak{T}_X:= \{(i,r,s) | 1\leq i\leq n, v_i+1\leq r\leq v_{i+1}, 1\leq s\leq v_i\}$. On the other hand, the tangent space $T_AY$ has distinct $\widetilde{T}$-characters
\[
\frac{\lambda_{ir}}{\lambda_{is}} := \prod_{a=i+1}^{n+1}\frac{\Lambda_{ar}}{\Lambda_{as}}, 
\]
for $(i,r,s)\in \mathfrak{T}_Y := \{(i,r,s) | 1\leq i\leq n, 1\leq r\leq v_{i+1}, 1\leq s\leq v_{i}, r\neq s\}$, and $\frac{\lambda_{ir}}{\lambda_{is}}$ descends to $\frac{\Lambda_r}{\Lambda_s}$ as $\widetilde{T}\rightarrow T$.
\begin{itemize}
    \item When $(i,r,s)\in\mathfrak{T}_Y$ and $r<s$, $\frac{\lambda_{ir}}{\lambda_{is}}$ reduces to a $T$-character non-existent in $T_AX$, which suggests that the sum of residues computed by recursive formulae along all such directions should eventually vanish, under the specialization $\widetilde{T}\rightarrow T, Q_{is}\rightarrow Q_i, y\rightarrow 1$. In fact, such directions always connect $A$ to degenerate fixed points on $Y$.
    \item When $(i,r,s)\in\mathfrak{T}_Y$ and $r>s$, $\frac{\lambda_{ir}}{\lambda_{is}}$ descends to the same $T$-character $\frac{\Lambda_r}{\Lambda_s}$ of $T_AX$ for any $i$. Among all 1-dim $\wT$-orbits from $A$ along such directions, only one connects $A$ to a non-degenerated $\wT$-fixed point in $Y$. (As we will see, it happens when $i$ takes the smallest possible value.) By results to be proven in Section \ref{IsoRec}, the recursive relation along this orbit contributes already the correct residue of $J^X$, which suggests that the sum of contribution from all others, i.e. those connecting $A$ to degenerate fixed points in $Y$, should vanish as $\widetilde{T}\rightarrow T, Q_{is}\rightarrow Q_i, y\rightarrow 1$. 
\end{itemize}
Recall that degenerate $\wT$-fixed points in $Y$ are those whose representatives in $R$ are unstable with respect to the $G$-action. They are exactly the ones which are non-isolated as $T$-fixed points.

\par Our proof is not complete until these two vanishing statements above are verified. In order to solve this, we introduce a ``moduli space'' $\overline{\mathcal{M}}$ of 1-dim $T$-orbits from $A\in Y$ to non-isolated $T$-fixed points in $Y$ with fixed tangent $T$-character $\frac{\Lambda_r}{\Lambda_s}$ at $A$, compactified by certain broken $T$-orbits which we call balanced. The 1-dim $\wT$-orbits from $A$ to the degenerate $\wT$-fixed points involved are exactly $\wT$-fixed points in $\overline{\mathcal{M}}$, and their contributions to the residue at $q = (\frac{\Lambda_r}{\Lambda_s})^{1/m}$ can be regarded as the $\wT$-fixed point localization terms of a holomorphic Euler characteristic of the form 
\[
\chi (\overline{\mathcal{M}}; (1-y) \mathcal{F}),
\]
where $\mathcal{F}$ is a vector bundle over $\overline{\mathcal{M}}$. The sum of all such contributions thus vanishes as $y\rightarrow 1$.  We carry out the details in Section \ref{NonisoRec}. 
\begin{remark}
At this point, $J^{tw,Y}$ is no longer the central object of consideration: we focus instead on how the recursive coefficients of $\L^{tw,Y}$ behave. The Euler characteristic over $\overline{\mathcal{M}}$ as above, which may morally be regarded as an recursive formulae computing the residue of $J^X|_A$ at $q = (\frac{\Lambda_r}{\Lambda_s})^{1/m}$ through 1-dim $T$-orbits in $Y$ from $A$ to non-trivial connected components of $T$-fixed points, generalizes the recursive formulae as in Criterion \ref{criterion} which works only when $T$-fixed points are isolated, and motivates the title \textbf{non-isolated recursion} of Section \ref{NonisoRec}.
\end{remark}

\subsection{$J^{tw,Y}$ is a value of twisted $\mathcal{J}$-function of $Y$} \label{JonJ}

\par Given $d = (d_{is})$, we denote by $J^Y_{d}$ and $J^{tw,Y}_{d}$ the coefficients of $Q^d = \prod_{i,s} Q_{is}^{d_{is}}$ in $J^Y$ and $J^{tw,Y}$ respectively. It follows from quantum K-theory of toric varieties that given $\wT$-fixed point $a\in Y$ and tangent $\wT$-character $\widetilde{\lambda}$ in $T_aY$, $J^Y$ satisfies 
\[
\Res_{q = \widetilde{\lambda}^{1/m}} Q^{d+mD} \cdot J^Y_{d+mD}|_a \cdot \frac{dq}{q} = - \frac{Q^{mD}}{m}\frac{\Eu(T_aY)}{\Eu(T_\phi Y_{0,2,mD})} \cdot Q^{d} \cdot J^Y_d|_b(\widetilde{\lambda}^{1/m})
\]
for any positive integer $m$ and degree $d$, where $b$, $D$, $\phi$ are all as before. Our goal is to prove that
\[
\Res_{q = \widetilde{\lambda}^{1/m}} Q^{d+mD} \cdot J^{tw,Y}_{d+mD}|_a \cdot \frac{dq}{q} = - \frac{Q^{mD}}{m} \frac{\Eu(T_aY)}{\Eu(y^{-1}\mathfrak{g}/\mathfrak{s})|_a} \frac{\Eu((y^{-1}\mathfrak{g}/\mathfrak{s})_{0,2,mD})|_\phi}{\Eu(T_\phi Y_{0,2,mD})} \cdot Q^{d} \cdot J^Y_d|_b(\widetilde{\lambda}^{1/m}).
\]
Recall that 
\[
J^{tw,Y} = \prod_{(i,r,s)\in\mathfrak{T}_{\mathfrak{g}/\mathfrak{s}}}(\Gamma^i_{r,s})^{-1}\cdot J^Y,\quad \text{ and } \quad \mathfrak{g}/\mathfrak{s} = \bigoplus_{(i,r,s)\in \mathfrak{T}_{\mathfrak{g}/\mathfrak{s}}} \frac{P_{ir}}{P_{is}}
\]
with $\mathfrak{T}_{\mathfrak{g}/\mathfrak{s}} = \{(i,r,s)|1\leq i\leq n, 1\leq r,s\leq v_i, r\neq s\}$. Hence, it suffices to prove for any $(i,r,s)\in \mathfrak{T}_{\mathfrak{g}/\mathfrak{s}}$ that
\[
\left.\frac{(\Gamma^i_{r,s})^{-1}|_aQ^{d+mD}}{(\Gamma^i_{r,s})^{-1}|_bQ^d}\right|_{q=\widetilde{\lambda}^{1/m}} = \frac{\Eu((y^{-1}\frac{P_{ir}}{P_{is}})_{0,2,mD})|_\phi}{\Eu(y^{-1}\frac{P_{ir}}{P_{is}})|_a} \cdot Q^{mD}. 
\]
By definition, on the domain curve of $\phi$, $T$ acts with character $\widetilde{\lambda}^{1/m}$ on the tangent space of $a$, and $\phi^*P_{ir} = \mathcal{O}(-mD_{ir})$, so $P_{ir}|_b = \widetilde{\lambda}^{mD_{ir}/m} \cdot P_{ir}|_a$, and same for $P_{is}$. Then by direct computation,
\begin{align*}
\left.\frac{(\Gamma^i_{r,s})^{-1}|_aQ^{d+mD}}{(\Gamma^i_{r,s})^{-1}|_bQ^d}\right|_{q=\widetilde{\lambda}^{1/m}} & = \left.\frac{\prod_{l=1}^{d_{is}-d_{ir}+m(D_{is}-D_{ir})}(1 - y\frac{P_{is}|_a}{P_{ir}|_a}\cdot q^l)}{\prod_{l=1}^{d_{is}-d_{ir}} (1 -y\frac{P_{is}|_b}{P_{ir}|_b}\cdot q^l)}\right|_{q=\widetilde{\lambda}^{1/m}} \cdot Q^{mD} \\
& = \prod_{l=1}^{m(D_{is}-D_{ir})}\left(1-y\frac{P_{is}|_a}{P_{ir}|_a}\cdot \widetilde{\lambda}^{l/m}\right)\cdot Q^{mD} \\
& = \frac{\prod_{l=0}^{m(D_{is}-D_{ir})}(1-y\frac{P_{is}|_a}{P_{ir}|_a}\cdot \widetilde{\lambda}^{l/m})}{(1-y\frac{P_{is}|_a}{P_{ir}|_a})}\cdot Q^{mD} = \frac{\Eu((y^{-1}\frac{P_{ir}}{P_{is}})_{0,2,mD})|_\phi}{\Eu(y^{-1}\frac{P_{ir}}{P_{is}})|_a} \cdot Q^{mD}. 
\end{align*}

\subsection{Isolated recursive coefficient of $\L^{tw,Y}$ reduces to that of $\L^X$} \label{IsoRec}

\par By isolated recursive coefficient, we mean the $\wT$-equivariant recursive coefficient along orbits $ab$, where $a$ and $b$ are both non-degenerate $\wT$-fixed points in $Y$ and thus isolated even as $T$-fixed points. As is described in Section \ref{StructureProofThm2}, $a$ and $b$ are lifts of adjacent $T$-fixed points of $X$. We prove that 
\[
\coeff^{tw,Y}_{ab}(m) = \coeff^X_{ab}(m)
\]
as $\Lambda_{is}=1(\forall i\leq n), Q_{is} = Q_i, y=1$. The meaning of both sides are given in Section \ref{StructureProofThm2}.

\par We first identify the degree terms of both sides. To avoid confusion on notation, let $\widetilde{D} = (D_{is})$ be the homological degree of (the lift of) $ab$ in $Y$, and $D = (D_{i})$ be that of $ab$ in $X$, we prove that 
\[
\prod_{i=1}^n\prod_{s=1}^{v_i} Q_{is}^{d_{is}} = \prod_{i=1}^n Q_i^{d_i}
\]
as $Q_{is} = Q_i$. In fact, over $Y$, since the Novikov variables are defined corresponding to the basis $\{c_1(P_{is})\}$ of $H_2(Y)$, in $\wT$-equivariant theory, we have $P_{is}|_b = \widetilde{\lambda}^{D_{is}} P_{is}|_a \in R(\widetilde{T})$ over $Y$. Restricting back to $T$, we have $P_{is}|_b = \lambda^{D_{is}} P_{is}|_a \in R(T)$. Similarly, over $X$, we have $\det V_i|_b = \lambda^{D_i}\det V_i|_a \in R(T)$. In the sense of Proposition \ref{descend}, $\det V_i$ is identified with $\prod_{s=1}^{v_i} P_{is}$, so we have $D_i = \sum_{s=1}^{v_i} D_{is}$, which implies what we need.

\par Now it remains to show that
\[
\frac{\Eu(T_aY)}{\Eu(y^{-1}\mathfrak{g}/\mathfrak{s})|_a} \cdot \frac{\Eu((y^{-1}\mathfrak{g}/\mathfrak{s})_{0,2,m\widetilde{D}})|_{\widetilde{\phi}}}{\Eu(T_{\widetilde{\phi}} Y_{0,2,m\widetilde{D}})}
\rightarrow 
\Eu(T_aX) \cdot \frac{1}{\Eu(T_{\phi} X_{0,2,mD})}
\]
as $\widetilde{T}\rightarrow T$ and $y\rightarrow 1$. Here we use $\widetilde{\phi}$ to refer to the ramified covering to $ab$ in $Y$. Notice that 
\[
T_{\widetilde{\phi}} Y_{0,2,m\widetilde{D}} = (TY)_{0,2,m\widetilde{D}} - 1 = \sum(-1)^i H^i(\mathbb{C}P^1;\widetilde{\phi}^*TY) - 1
\]
since $\widetilde{\phi}$ has two marked points. The $(-1)$ term comes from the one-dimensional automorphism of $\widetilde{\phi}$, which is given by the ``rotation'' on its domain curve fixing the two ramification points and upon which $\wT$ has trivial action. Similarly, we have $T_\phi X_{0,2,mD} = (TX)_{0,2,mD} - 1$. Therefore, what we need boils down to proving that, under the natural identification of $ab$ in both $X$ and $Y$ with $\C P^1$,
\[
(TY - \mathfrak{g}/\mathfrak{s})|_{ab}\in K_{\wT}(\C P^1) \rightarrow (TX)|_{ab} \in K_T(\C P^1)
\]
(and thus $T_aY - \mathfrak{g}/\mathfrak{s}|_a \rightarrow T_aX$) as $\wT\rightarrow T$. This follows directly from the construction $X = R//G$ and $Y = R//S$. More explicitly, we have
\[
TX = \bigoplus_{i=1}^n\Hom(V_i,V_{i+1}-V_i) = \bigoplus_{i=1}^{n-1}\bigoplus_{s=1}^{v_i}\left(\bigoplus_{r=1}^{v_{i+1}}\frac{P_{i+1,r}}{P_{is}}\ominus \bigoplus_{r=1}^{v_i}\frac{P_{ir}}{P_{is}}\right)\oplus \bigoplus_{s=1}^{v_n}\left(\bigoplus_{r=1}^{N}\frac{\Lambda_r}{P_{ns}}\ominus \bigoplus_{r=1}^{v_n}\frac{P_{nr}}{P_{ns}}\right),
\]
\[
TY = \bigoplus_{i=1}^n\bigoplus_{s=1}^{v_i}\Hom\left(P_{is}, \bigoplus_{r=1}^{v_{i+1}}P_{i+1,r}\Lambda_{i+1,r} - P_{is}\right) = \bigoplus_{i=1}^n\bigoplus_{s=1}^{v_i}\left(\bigoplus_{r=1}^{v_{i+1}}\frac{P_{i+1,r}\Lambda_{i+1,r}}{P_{is}}\ominus 1\right),
\]
\[
\mathfrak{g}/\mathfrak{s} = \bigoplus_{i=1}^n\bigoplus_{s=1}^{v_i}\left( 
\bigoplus_{r=1,r\neq s}^{v_i}\frac{P_{ir}}{P_{is}} \right)
\]
As $\widetilde{T}\rightarrow T$, we take $\Lambda_{ir} = 1\ (\forall 1\leq i\leq n, 1\leq r\leq v_i)$ and $\Lambda_{n+1,r} = \Lambda_r\ (\forall 1\leq r\leq N)$.
\begin{remark}
In this last step, we prove essentially
\[
\iota^*(TY - \mathfrak{g}/\mathfrak{s}) = q^*TX,
\]
as $T$-equivariant vector bundles on $R^s(G)\subset R$, where $\iota$ and $q$ are as given in Proposition \ref{descend}.
\end{remark}

\section{Non-isolated recursion} \label{NonisoRec}
\par In this section, we finish the proof of Theorem \ref{2} by verifying the two vanishing statements described in Section \ref{StructureProofThm2}, in regard to the ``non-isolated recursion'' from $A$ to degenerate $\wT$-fixed points in $Y$.  We illustrate the proof of the two statements by the $T$-character $\Lambda_{v_1+1}/\Lambda_1$ of $T_AX$ and the $T$-character $\Lambda_1/\Lambda_{v_1+1}$ non-existent in $T_AX$ respectively. Over the flag variety $X$, the first tautological bundle $V_1$ restricts to $\Lambda_1+\cdots+\Lambda_{v_1}$ over $A$, so $\Lambda_{v_1+1}$ is the first character that appears in all other tautological bundles $V_i$ but not $V_1$.

\par To make our statements more precise, we introduce some further notation. For $v>w>0$, let $I_{v\times w}$ be the $v\times w$ matrix with the first $w$ rows giving the $w\times w$ identity matrix and the last $v-w$ rows being all-zero. In other words, $A\in R$ is represented by the sequence of matrices 
\[
A = (I_{v_2\times v_1}, I_{v_3\times v_2}, \cdots, I_{v_{n+1}\times v_n}).
\]
Moreover, for $v>w>0$ and for $k\leq v$, let $I_{v\times w,k}$ be the matrix that differs from $I_{v\times w}$ only by the first column: instead of $\e_1\in\C^v$ we put $\e_{k}$. For instance, $I_{v\times w,1} = I_{v\times w}$; $I_{v\times w,k}$ is of full rank if and only if $k=1$ or $k>w$. 

\par Now, for a given subset $\JJ = \{j_1,\cdots, j_l\} \subset \{1,2,\cdots,n\}$ (where $j_1<\cdots<j_l$), we define $E_{\JJ} = E_{j_1,\cdots, j_l}\in R$ as the sequence of matrices with the $j$-th matrix:
\begin{itemize}
    \item equal to $I_{v_{j+1}\times v_j}$ if $j\notin\JJ$,
    \item equal to $I_{v_{j+1}\times v_j,v_1+1}$ of $j\in\JJ$.
\end{itemize}
In particular, $E_\phi = A$. For instance, in the case of $X = \text{Flag}(1,2;3)$, i.e. when $n=2$ and $v_i=i$ $(i=1,2)$,
\[
E_1 = \left(\begin{bmatrix} 0 \\ 1 \end{bmatrix}, \begin{bmatrix} 1 & 0\\ 0 & 1 \\ 0 & 0 \end{bmatrix}\right), \quad
E_{2} = \left(\begin{bmatrix} 1 \\ 0 \end{bmatrix}, \begin{bmatrix} 0 & 0\\ 1 & 1\\ 0 & 0 \end{bmatrix}\right), \quad
E_{1,2} = \left(\begin{bmatrix} 0 \\ 1 \end{bmatrix}, \begin{bmatrix} 0 & 0\\ 1 & 1\\ 0 & 0 \end{bmatrix}\right).
\]
Note that all $E_{\JJ}$ represent $\wT$-(and thus $T$-)fixed points in $Y$. Yet, only $E_\phi$ and $E_1$ represent $T$-fixed points in $X$, as all others contain at least one matrix not of full rank and are thus unstable with respect to the $G$-action on $R$. Moreover, $E_1,\cdots, E_n$ are exactly the $\wT$-fixed points in $Y$ to which $A$ is adjacent through a 1-dim $\wT$-orbit $\C P^1$ with tangent $T$-character $\Lambda_{v_1+1}/\Lambda_1$ at $A$, among which $E_1$ is the only non-degenerate. By our deduction in Section \ref{IsoRec}, $\coeff^{tw,Y}_{AE_1}(m)$ reduces to $\coeff^{X}_{AE_1}(m)$ under the specialization $\Lambda_{is}=1(\forall i\leq n), Q_{is} = Q_i, y=1$. We give it the special name $C = E_1$. 
\begin{remark}
There is no 1-dim $T$-orbit that connects $A$ directly to, for example, $E_{12}$. There is, however, a ``broken'' orbit, consisting of two components, going first from $A$ to $E_1$ and then from $E_1$ to $E_{12}$. Such broken orbits will be of great importance to us.
\end{remark}

\par Following a similar spirit, for $v>w>0$ and for $k\leq w$, let $I'_{v\times w,k}$ be the matrix that differs from $I_{v\times w}$ only by the $k$-th column: instead of $\e_k$ we put $\e_1$. Then for instance $I'_{v\times w,1} = I_{v\times w}$, and $I'_{v\times w,k}$ is never of full rank. Moreover, for any subset $\JJ = \{j_1,\cdots, j_l\} \subset \{2,\cdots,n\}$, we define $F_{\JJ} = F_{j_1,\cdots, j_l}\in R$ as the sequence of matrices with the $j$-th matrix:
\begin{itemize}
    \item equal to $I_{v_{j+1}\times v_j}$ if $j\notin\JJ$,
    \item equal to $I'_{v_{j+1}\times v_j,v_1+1}$ of $j\in\JJ$.
\end{itemize}
Note that still all $F_{\JJ}$ represent $\wT$-(and thus $T$-)fixed points in $Y$. $F_2,\cdots, F_n$ are exactly the $\wT$-fixed points in $Y$ to which $A$ is adjacent through a 1-dim $\wT$-orbit $\C P^1$ with tangent $T$-character $\Lambda_{1}/\Lambda_{1+v_1}$ at $A$, and all of them are degenerate as we explained earlier. In the case of $X = \text{Flag}(1,2;3)$,
\[
F_\phi = A = \left(\begin{bmatrix} 1 \\ 0 \end{bmatrix}, \begin{bmatrix} 1 & 0\\ 0 & 1 \\ 0 & 0 \end{bmatrix}\right), \quad
F_{2} = \left(\begin{bmatrix} 1 \\ 0 \end{bmatrix}, \begin{bmatrix} 1 & 1\\ 0 & 0 \\ 0 & 0 \end{bmatrix}\right).
\]

\par For simplicity, we denote $\lambda_k = \frac{\lambda_{k,v_{1}+1}}{\lambda_{k,1}}$ for $1\leq k\leq n$ (for definition see Section \ref{StructureProofThm2}). Then, the tangent $\wT$-character in $T_AY$ along $AE_k$ is exactly $\lambda_k$ for $1\leq k\leq n$, while that along $AF_k$ is exactly $\lambda_k^{-1}$ for $2\leq k\leq n$. The two vanishing statements may be formulated rigorously as follows.
\begin{proposition} \label{vanishing1}
Under the specialization $\Lambda_{is}=1(\forall i\leq n), \Lambda_{n+1,s} = \Lambda_s, Q_{is} = Q_i, y=1$,
\[
\sum_{k=2}^n \frac{-1}{1-q\lambda_{k}^{-1/m}} \cdot \coeff^{tw,Y}_{AE_k}(m) \cdot J^{tw,Y}_{E_k}(\lambda_k^{1/m}) = 0.
\]
\end{proposition}
\begin{proposition} \label{vanishing2}
Under the specialization $\Lambda_{is}=1(\forall i\leq n), \Lambda_{n+1,s} = \Lambda_s, Q_{is} = Q_i, y=1$,
\[
\sum_{k=2}^n \frac{-1}{1-q\lambda_{k}^{1/m}} \cdot \coeff^{tw,Y}_{AF_k}(m) \cdot J^{tw,Y}_{F_k}(\lambda_k^{-1/m}) = 0.
\]
\end{proposition}
In fact, if a function $J$ satisfies $\Res_{q=\lambda} J(q) \frac{dq}{q} = R$, then $J = -R / (1-q\lambda^{-1})$ up to a regular function near $\lambda$. 
\par For expository reasons, we prove the two propositions below, equivalent to the two above.
\begin{proposition} \label{nonisoA}
Under the specialization $\Lambda_{is}=1(\forall i\leq n), \Lambda_{n+1,s} = \Lambda_s, Q_{is} = Q_i, y=1$,
\[
\sum_{k=1}^n \frac{-1}{1-q\lambda_{k}^{-1/m}} \coeff^{tw,Y}_{AE_k}(m) J^{tw,Y}_{E_k}(\lambda_k^{1/m}) 
= \frac{-1}{1-q(\Lambda_{v_1+1}/\Lambda_1)^{-1/m}} \coeff^X_{AE_1}(m) J^X_{E_1}((\Lambda_{v_1+1}/\Lambda_1)^{1/m})
\]
\end{proposition}
\begin{proposition} \label{nonisoC}
Under the specialization $\Lambda_{is}=1(\forall i\leq n), \Lambda_{n+1,s} = \Lambda_s, Q_{is} = Q_i, y=1$,
\[
\sum_{k=2}^n \frac{-1}{1-q\lambda_{k}^{-1/m}} \cdot \coeff^{tw,Y}_{CE_{1,k}}(m) \cdot J^{tw,Y}_{E_{1,k}}(\lambda_k^{1/m}) = 0.
\]
\end{proposition}
\begin{remark}
In the propositions above, we formally write $m$-th roots of $T$- or $\wT$-characters without specifying the choices, but algebraically there should always be $m$ different possibilities, which differ from each other by $m$-th roots of unity. In fact, as we will see from the proofs, in order that the equations above are correct, one should always take the sum over all $m$ choices of $m$-th roots whenever such an expression appears. However, the extra summation does not weaken our statements as the residues of $q$ at these different $m$-th roots of torus characters are independent. For instance, if we manage to show that
\[
\sum_{\alpha = 1}^m \frac{c_\alpha}{1-q\cdot \zeta^\alpha\lambda^{-1/m}} = 0,
\]
where $\zeta = e^{2\pi i/m}$, then we must have $c_\alpha = 0$ for all $\alpha$. We will omit this implied summation in most part of the proof to avoid unnecessary confusion, and only emphasize it in the special case of $X = \operatorname{Flag}(1,2;3)$ in Section \ref{specialcasen2} below.
\end{remark}
\par Recall that by our earlier special notation, $C=E_1$ is the only non-degenerate $\wT$-fixed point other than $A$ in our story. Proposition \ref{nonisoA} describes the residue of $J^X_A$ at $q = (\Lambda_{v_1+1}/\Lambda_1)^{1/m}$, and Proposition \ref{nonisoC} the residue of $J^X_C$ at $q = (\Lambda_{v_1+1}/\Lambda_1)^{1/m}$ (which should vanish). In fact, we will see that the proof of Proposition \ref{nonisoC} is mostly contained as part of the proof of Proposition \ref{nonisoA}.

\par Proposition \ref{nonisoA} is equivalent to Proposition \ref{vanishing1} by what has been proven in Section \ref{IsoRec}. Proposition \ref{nonisoC} is equivalent to Proposition \ref{vanishing2} by Weyl group symmetry: the story of the $T$-character $\Lambda_{v_1+1}/\Lambda_1$ (non-existent in $T_CX$) at $C$ is exactly the same as that of $\Lambda_{1}/\Lambda_{v_1+1}$ (non-existent in $T_AX$) at $A$.

\par Finally, it is worth noting that it suffices to consider $\Lambda_{v_1+1}/\Lambda_1$ and $\Lambda_{1}/\Lambda_{v_1+1}$ at $A$ as we do in this section. The cases of other $T$-characters of the form $\Lambda_r/\Lambda_s$ and $\Lambda_s/\Lambda_r$ can be proved using exactly the same method. To say the least, assume $r>s$ and $v_{K}+1\leq r\leq v_{K+1}$, the structure of the $T$- and $\wT$-fixed loci involved in $X$ will be the same as that in the ``truncated'' flag variety $X' = \text{Flag}(v_K,v_{K+1}\cdots,v_n;N)$, which will be strictly simpler.

\par Our proof of Proposition \ref{nonisoA} will be separated into two steps.
\begin{itemize}
    \item \textbf{Step I:  } We prove that for each $k\geq 2$ (i.e. when $E_k\neq E_1 = C$), the residue term from $AE_k$
    \[
    \frac{-1}{1-q\lambda_{k}^{-1/m}} \cdot \coeff^{tw,Y}_{AE_k}(m) \cdot J^{tw,Y}_{E_k}(\lambda_k^{1/m}),
    \]
    when supplemented with suitable terms coming from the so-called balanced broken orbits, gives a sum vanishing as $\widetilde{T}\rightarrow T, Q_{is}\rightarrow Q_i, y\rightarrow 1$.
    \item \textbf{Step II:  } We prove that the supplementary terms from balanced broken orbits cancel out when we add together all the contribution from $AE_1$ to $AE_n$.  
\end{itemize}

\par In Section \ref{specialcasen2}, we will first work fully out the case of $X = \text{Flag}(1,2;3)$, that is to say, $n=2$ and $v_i=i\ (i=1,2)$. This is the easiest case where non-isolated recursion appears, and the proof comprises already the aforementioned idea in its entirety. Then we proceed to the proof of the general case in Section \ref{StepI} and \ref{StepII}, where the extra complexity comes only from notation.

\subsection{Special case of $X = \text{Flag}(1,2;3)$} \label{specialcasen2}

\par We briefly recall our notation in this special case. The flag variety $X$ and its associated abelian quotient $Y$ are both GIT quotients
\[
X = R//G = \Hom(\C,\C^2)\oplus\Hom(\C^2,\C^3) // GL(1)\times GL(2),
\]
\[
Y = R//S = \Hom(\C,\C^2)\oplus\Hom(\C^2,\C^3) // \C^\times \times (\C^\times)^2.
\]
Points on $X$ and $Y$ are thus represented by elements in $R$, i.e. pairs $(M_1,M_2)$ with $M_i$ being an $(i+1)\times i$ matrix. The natural $T = (\C^\times)^3$-action on $X$ and $Y$ are given by scaling the three row vectors of $M_2$, and we denote the equivariant parameters by $\Lambda_1, \Lambda_2, \Lambda_3$. 

\par $Y$ is a $\C P^1$-fiber bundle over $\C P^2\times\C P^2$ in this specific case, and $T$-fixed points on $Y$ are not necessarily isolated. For instance, recall that we mentioned four $T$-fixed points $A,C,E_2,E_{1,2}$ earlier in this section, and among them only $A$ and $C$ are isolated. In fact, the connected component of $T$-fixed points that contain $E_2$ and $E_{1,2}$ is isomorphic to $\C P^1$, and can be represented by
\[
\Omega = \left\{ \left. \left(\begin{bmatrix} a \\ b \end{bmatrix}, \begin{bmatrix} 0 & 0\\ 1 & 1 \\ 0 & 0 \end{bmatrix}\right)\ \right| \ a,b\in \C, \text{ not both zero} \right\}.
\]
It is one entire $\C P^1$-fiber in $Y$. Moreover, except $E_{1,2}$, each point on this component has exactly one 1-dim $T$-orbit connecting to $A$, and each such orbit has tangent $T$-character $\Lambda_2/\Lambda_1$ at $A$. More precisely, for fixed $a$ and $b$ with $a\neq 0$, the orbit is represented by
\[
\left\{ \left. \left(\begin{bmatrix} 1 \\ bt \end{bmatrix}, \begin{bmatrix} 1 & 0\\ at & 1 \\ 0 & 0 \end{bmatrix}\right)\ \right| \ t\in \C^\times \ \right\} = \left\{ \left. \left(\begin{bmatrix} a \\ b \end{bmatrix}, \begin{bmatrix} (at)^{-1} & 0\\ 1 & 1 \\ 0 & 0 \end{bmatrix}\right)\ \right| \ t\in \C^\times \ \right\}
\]
In order to connect $E_{1,2}$ to $A$ along a 1-dim $T$-orbit with tangent character $\Lambda_2/\Lambda_1$, however, the orbit has to break at $C = E_1$ (definition also see earlier in Section \ref{NonisoRec}). In other words, we have to consider (the closure of) the union of two irreducible components, one connecting $A$ to $C$ and one connecting $C$ to $E_{1,2}$. More precisely, it is represented by 
\[
\left\{ \left. \left(\begin{bmatrix} 1 \\ t \end{bmatrix}, \begin{bmatrix} 1 & 0\\ 0 & 1 \\ 0 & 0 \end{bmatrix}\right)\ \right| \ t\in \C^\times \ \right\} \bigcup \left\{ \left. \left(\begin{bmatrix} 0 \\ 1 \end{bmatrix}, \begin{bmatrix} 1 & 0\\ t & 1 \\ 0 & 0 \end{bmatrix}\right)\ \right| \ t\in \C^\times \ \right\}
\]
This gives us the first example of the balanced broken orbit mentioned above. 

\par In fact, it is not hard to see the above are all possibilities of 1-dim $T$-orbits with tangent character $\Lambda_2/\Lambda_1$ at $A$. Such orbits are not isolated essentially because there is a two-dimensional subspace of $T_AY$ where $T$ acts by character $\Lambda_2/\Lambda_1$. We enlarge $T$ into $\wT = (\C^\times)^2\times (\C^\times)^3$ to achieve isolatedness. The two extra factors act by scaling the two rows of $M_1$ for each representative $(M_1,M_2)\in R$. Geometrically it gives rise to rotation on the fibers $\C P^1$ in $Y$. Under $\wT$-action, the characters of $T_AY$ are distinct. More precisely, using the notation introduced earlier,
\[
T_AY = \frac{\Lambda_{22}\Lambda_{32}}{\Lambda_{21}\Lambda_{31}} + \frac{\Lambda_{32}}{\Lambda_{31}} + \frac{\Lambda_{33}}{\Lambda_{31}} + \frac{\Lambda_{31}}{\Lambda_{32}} + \frac{\Lambda_{33}}{\Lambda_{32}} \in R(\wT) 
\ \rightarrow \ 
\frac{\Lambda_2}{\Lambda_1} + \frac{\Lambda_2}{\Lambda_1} + \frac{\Lambda_3}{\Lambda_1} + \frac{\Lambda_1}{\Lambda_2} + \frac{\Lambda_3}{\Lambda_2} \in R(T)
\]
$A,C,E_2,E_{1,2}$ are now isolated $\wT$-fixed points. The figure below illustrates briefly the configuration of relevant $T$-fixed points and 1-dim $T$-orbits.
\begin{figure}[hbtp]
    \centering
    \def\svgwidth{3in}
\begingroup%
  \makeatletter%
  \providecommand\color[2][]{%
    \errmessage{(Inkscape) Color is used for the text in Inkscape, but the package 'color.sty' is not loaded}%
    \renewcommand\color[2][]{}%
  }%
  \providecommand\transparent[1]{%
    \errmessage{(Inkscape) Transparency is used (non-zero) for the text in Inkscape, but the package 'transparent.sty' is not loaded}%
    \renewcommand\transparent[1]{}%
  }%
  \providecommand\rotatebox[2]{#2}%
  \newcommand*\fsize{\dimexpr\f@size pt\relax}%
  \newcommand*\lineheight[1]{\fontsize{\fsize}{#1\fsize}\selectfont}%
  \ifx\svgwidth\undefined%
    \setlength{\unitlength}{383.55974813bp}%
    \ifx\svgscale\undefined%
      \relax%
    \else%
      \setlength{\unitlength}{\unitlength * \real{\svgscale}}%
    \fi%
  \else%
    \setlength{\unitlength}{\svgwidth}%
  \fi%
  \global\let\svgwidth\undefined%
  \global\let\svgscale\undefined%
  \makeatother%
  \begin{picture}(1,0.44036062)%
    \lineheight{1}%
    \setlength\tabcolsep{0pt}%
    \put(0,0){\includegraphics[width=\unitlength,page=1]{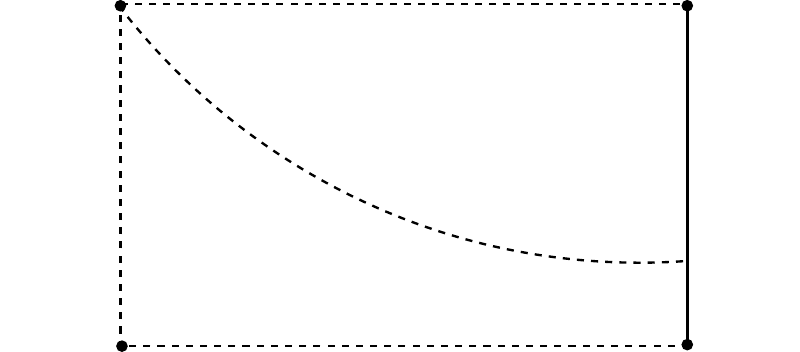}}%
    \put(0.10290662,0.40329566){\color[rgb]{0,0,0}\makebox(0,0)[lt]{\lineheight{1.25}\smash{\begin{tabular}[t]{l}$A$\end{tabular}}}}%
    \put(-0.00140635,0.03229076){\color[rgb]{0,0,0}\makebox(0,0)[lt]{\lineheight{1.25}\smash{\begin{tabular}[t]{l}$C = E_1$\end{tabular}}}}%
    \put(0.87160753,0.39769061){\color[rgb]{0,0,0}\makebox(0,0)[lt]{\lineheight{1.25}\smash{\begin{tabular}[t]{l}$E_2$\end{tabular}}}}%
    \put(0.87350936,0.0249459){\color[rgb]{0,0,0}\makebox(0,0)[lt]{\lineheight{1.25}\smash{\begin{tabular}[t]{l}$E_{1,2}$\end{tabular}}}}%
    \put(0.86780403,0.21131828){\color[rgb]{0,0,0}\makebox(0,0)[lt]{\lineheight{1.25}\smash{\begin{tabular}[t]{l}$\Omega$\end{tabular}}}}%
    \put(0.3980696,0.24174641){\color[rgb]{0,0,0}\makebox(0,0)[lt]{\lineheight{1.25}\smash{\begin{tabular}[t]{l}$T$-orbits\end{tabular}}}}%
  \end{picture}%
\endgroup%

    \caption{$T$-fixed points and connecting orbits}
\end{figure}
\par We start by proving Proposition \ref{nonisoC} for $C$. That is to say, we prove for any $m$,
\[
\frac{-1}{1-q\left(\frac{\Lambda_{32}}{\Lambda_{31}}\right)^{-\frac{1}{m}}} \cdot 
\coeff^{tw,Y}_{CE_{1,2}}(m) \cdot 
J^{tw,Y}_{E_{1,2}}\left(\left(\frac{\Lambda_{32}}{\Lambda_{31}}\right)^{\frac{1}{m}}\right) = 0
\]
under specialization $\wT\rightarrow T, Q_{is}\rightarrow Q_i, y\rightarrow 1$. Here $\Lambda_{32}/\Lambda_{31}$ is the tangent $\wT$-character along $CE_{1,2}$, and is the only one descending to $\Lambda_2/\Lambda_1$ as $\wT\rightarrow T$ in $T_CY$
\[
T_CY = \frac{\Lambda_{21}\Lambda_{31}}{\Lambda_{22}\Lambda_{32}} + \frac{\Lambda_{32}}{\Lambda_{31}} + \frac{\Lambda_{33}}{\Lambda_{31}} + \frac{\Lambda_{31}}{\Lambda_{32}} + \frac{\Lambda_{33}}{\Lambda_{32}} \in R(\wT) 
\ \rightarrow \ 
\frac{\Lambda_1}{\Lambda_2} + \frac{\Lambda_2}{\Lambda_1} + \frac{\Lambda_3}{\Lambda_1} + \frac{\Lambda_1}{\Lambda_2} + \frac{\Lambda_3}{\Lambda_2} \in R(T)
\]
The proposition indicates that $J^X_C$ has no poles at $q = (\Lambda_2/\Lambda_1)^{1/m}$, which is expected as $\Lambda_2/\Lambda_1$ does not appear as a character in $T_CX = \Lambda_1/\Lambda_2 + \Lambda_3/\Lambda_1 + \Lambda_3/\Lambda_2$.

\par It amounts to proving $\coeff^{tw,Y}_{CE_{1,2}}(m) = 0$. Indeed, we have 
\[
T_{E_{1,2}}Y = \frac{\Lambda_{21}}{\Lambda_{22}} + \frac{\Lambda_{31}}{\Lambda_{32}} + \frac{\Lambda_{33}}{\Lambda_{32}} + \frac{\Lambda_{31}}{\Lambda_{32}} + \frac{\Lambda_{33}}{\Lambda_{32}} \in R(\wT)
\ \rightarrow \ 
1 + \frac{\Lambda_1}{\Lambda_2} + \frac{\Lambda_3}{\Lambda_2} + \frac{\Lambda_1}{\Lambda_2} + \frac{\Lambda_3}{\Lambda_2} \in R(T)
\]
so $\Lambda_2/\Lambda_1$ does not appear as a $T$-character in $T_{E_{1,2}}Y$. It implies by recursive characterization of $J^{tw,Y}$ that $J^{tw,Y}_{E_{1,2}}$ does not have any poles that would descend to a pole at $q = (\Lambda_2/\Lambda_1)^{1/m}$, which in turn implies that $J^{tw,Y}_{E_{1,2}}((\Lambda_{32}/\Lambda_{31})^{1/m})$ is a well-defined expression as $\wT\rightarrow T$.

\par That $\coeff^{tw,Y}_{CE_{1,2}}(m) = 0$ may be verified through direct computation. It is an isolated recursive coefficient, and we have done similar computation several times in Section \ref{NonAbelLoc}. By the expression of $TY$ at the end of Section \ref{NonAbelLoc}, we have
\[
TY|_{CE_{1,2}} = \frac{\Lambda_{31}}{\Lambda_{32}}\O + \frac{\Lambda_{33}}{\Lambda_{32}}\O + \frac{\Lambda_{33}}{\Lambda_{31}}\O(1) + \frac{\Lambda_{21}\Lambda_{31}}{\Lambda_{22}\Lambda_{32}}\O(-1) + \frac{\Lambda_{32}}{\Lambda_{31}}\O(2),
\]
where $\O(-1)$ stands for the $\wT$-equivariant tautological bundle over $CE_{1,2}\simeq \C P^1$ with $\wT$ acting trivially on the fiber at $C$ (only one such lifting exists), and $\O(k) = \O(-1)^{\otimes -k}$, $\forall k\in\mathbb{Z}$. Therefore,
\begin{align*}
\coeff^{tw,Y}_{CE_{1,2}}(m) = & -\frac{Q_{21}^m}{m} \cdot
\frac{\prod_{l=1}^m\left(1-y\frac{\Lambda_{31}}{\Lambda_{32}}\left(\frac{\Lambda_{32}}{\Lambda_{31}}\right)^{\frac{l}{m}}\right) }
{\prod_{l=1}^m\left(1-y\left(\frac{\Lambda_{32}}{\Lambda_{31}}\right)^{\frac{l}{m}}\right) } \\
& \cdot \frac{\prod_{l=1}^m\left(1-\frac{\Lambda_{22}}{\Lambda_{21}}\left(\frac{\Lambda_{32}}{\Lambda_{31}}\right)^{\frac{l}{m}}\right)}
{\prod_{l=1}^m\left(1-\frac{\Lambda_{31}}{\Lambda_{33}}\left(\frac{\Lambda_{32}}{\Lambda_{31}}\right)^{\frac{l}{m}}\right) 
\prod_{l=1}^m\left(1-\left(\frac{\Lambda_{32}}{\Lambda_{31}}\right)^{\frac{l}{m}}\right) 
\prod_{l=1}^{m-1}\left(1-\frac{\Lambda_{31}}{\Lambda_{32}}\left(\frac{\Lambda_{32}}{\Lambda_{31}}\right)^{\frac{l}{m}}\right) }.
\end{align*}
All terms on RHS, no matter in numerator or denominator, are well-defined and invertible under the specialization $\Lambda_{2s}=1, \Lambda_{3s} = \Lambda_s, Q_{is} = Q_i, y=1$, except the one in numerator of the second factor with $l=m$, which gives
\[
1-y\frac{\Lambda_{31}}{\Lambda_{32}}\left(\frac{\Lambda_{32}}{\Lambda_{31}}\right)^{\frac{m}{m}} = 1 - y = 0, \quad \text{ as } y=1.
\]
Note that this factor comes from the twisting $(\Eu,y^{-1}\mathfrak{g}/\mathfrak{s})$.
\begin{remark}
The direct proof of Proposition \ref{nonisoC} above is included mostly for expository purposes. In fact, it works in this special case only because the tangent $T$-character $\Lambda_2/\Lambda_1$ along $CE_{1,2}$ appears with multiplicity 1 in $T_CY$, which implies $CE_{1,2}$ is an isolated 1-dim $T$-orbit. In general, $\Lambda_2/\Lambda_1$ may appear with higher multiplicity in $T_CY$ (even though it does not appear in $T_CX$ at all), and we will need a more systematic method, similar to the proof of Proposition \ref{nonisoA} that follows. Nevertheless, the term $(1-y)$ will always persist, which is intuitively the reason why the expected vanishing happens.
\end{remark} 

\par Then we proceed to the proof of Proposition \ref{nonisoA}. On LHS there are only two terms in this case, one from $AE_2$ and one from $AE_1 = AC$. The proof below contains essentially a re-summation argument. In non-rigorous terms, we will see:
\begin{itemize}
    \item The contribution from $AE_1$ is further decomposed into one involving $G_{E_1}(q)$, the principal part of $J^{tw,Y}_{E_1}(q)$ containing poles descending possibly to $(\Lambda_2/\Lambda_1)^{1/m}$, and one involving $\overline{G}(q)$, the remaining regular part.
    \item The former, which comes virtually from recursion along the broken orbit $A\rightarrow E_1\rightarrow E_{1,2}$, pairs with the contribution from $AE_2$ to form the ``non-isolated recursion'' from $A$ to $\Omega$, and altogether they vanish due to a (reversed) fixed-point localization argument. This is \textbf{Step I}.
    \item The latter takes well-defined value at $q = (\Lambda_2/\Lambda_1)^{1/m}$, and descends to RHS of the equation in Proposition \ref{nonisoA}, by the isolated recursion formula of Section \ref{IsoRec} and the vanishing result proven in Proposition \ref{nonisoC}. This is \textbf{Step II}.
\end{itemize}
Denote by $S_1$ the contribution from $AE_2$. Explicit computation shows
\begin{align*}
S_1:= & \frac{-1}{1-q\left(\frac{\Lambda_{32}}{\Lambda_{31}}\right)^{-\frac{1}{m}}} \cdot \coeff^{tw,Y}_{AE_2}(m) \cdot 
J^{tw,Y}_{E_2}\left(\left(\frac{\Lambda_{32}}{\Lambda_{31}}\right)^{\frac{1}{m}}\right) \\
= & \frac{1}{1-q\left(\frac{\Lambda_{32}}{\Lambda_{31}}\right)^{-\frac{1}{m}}} \cdot \frac{Q_{11}^mQ_{21}^m}{m} \cdot J^{tw,Y}_{E_2}\left(\left(\frac{\Lambda_{32}}{\Lambda_{31}}\right)^{\frac{1}{m}}\right) \cdot
\frac{ \left(y;\left(\frac{\Lambda_{31}}{\Lambda_{32}}\right)^{\frac{1}{m}}\right)_{-m} }
{ \left(y\frac{\Lambda_{31}}{\Lambda_{32}};\left(\frac{\Lambda_{31}}{\Lambda_{32}}\right)^{\frac{1}{m}}\right)_{-m} } \cdot \\
& \left(\frac{\Lambda_{31}}{\Lambda_{33}}, 1, \frac{\Lambda_{21}\Lambda_{31}}{\Lambda_{22}\Lambda_{32}};\left(\frac{\Lambda_{31}}{\Lambda_{32}}\right)^{\frac{1}{m}}\right)_{-m} 
\left(\frac{\Lambda_{31}}{\Lambda_{32}};\left(\frac{\Lambda_{31}}{\Lambda_{32}}\right)^{\frac{1}{m}}\right)_{1-m}
\end{align*}
where we use the $q$-Pochhammer symbols for simplicity of notation:
\[
(a;q)_{-n} = \prod_{l=1}^n \frac{1}{1-aq^{-l}}, \quad (a_1,\cdots,a_K;q)_{-n} = \prod_{k=1}^K (a_k;q)_{-n}
\]
for integer $n$. All except two $q$-Pochhammer symbols in the expression of $S_1$ are well-defined and invertible under the specialization $\Lambda_{2s}=1, \Lambda_{3s} = \Lambda_s, Q_{is} = Q_i, y=1$:
\begin{itemize}
    \item The last factor of $\left(y\frac{\Lambda_{31}}{\Lambda_{32}};\left(\frac{\Lambda_{31}}{\Lambda_{32}}\right)^{\frac{1}{m}}\right)_{-m}$ gives $(1-y)$, which as is in the proof of Proposition \ref{nonisoC} reduces to zero as $y\rightarrow 1$, in the \emph{numerator} of $S_1$.
    \item The last factor of $\left(\frac{\Lambda_{21}\Lambda_{31}}{\Lambda_{22}\Lambda_{32}};\left(\frac{\Lambda_{31}}{\Lambda_{32}}\right)^{\frac{1}{m}}\right)_{-m}$ gives $(1-\Lambda_{21}/\Lambda_{22})$, which is new for Proposition \ref{nonisoA} and reduces to zero as $\wT\rightarrow T$, in the \emph{denominator} of $S_1$.
\end{itemize}
This is exactly why naive computation does not work as a legitimate proof in this case. Separating out the two special terms and decomposing $1/(1-\Lambda_{21}/\Lambda_{22})$ further into $m$-th roots, we obtain
\begin{align*}
S_1 = & \sum_{\alpha=1}^m \frac{1-y}{1-q\left(\frac{\Lambda_{32}}{\Lambda_{31}}\right)^{-\frac{1}{m}}} \cdot \frac{Q_{11}^mQ_{21}^m}{m^2} \cdot J^{tw,Y}_{E_2}\left(\left(\frac{\Lambda_{32}}{\Lambda_{31}}\right)^{\frac{1}{m}}\right) \cdot
\frac{ \left(y;\left(\frac{\Lambda_{31}}{\Lambda_{32}}\right)^{\frac{1}{m}}\right)_{-m} }
{ \left(y\frac{\Lambda_{31}}{\Lambda_{32}};\left(\frac{\Lambda_{31}}{\Lambda_{32}}\right)^{\frac{1}{m}}\right)_{1-m} } \cdot 
\frac{1}{1-\zeta^\alpha \left(\frac{\Lambda_{21}}{\Lambda_{22}}\right)^{\frac{1}{m}}} \\
& \cdot \left(\frac{\Lambda_{31}}{\Lambda_{33}}, 1;\left(\frac{\Lambda_{31}}{\Lambda_{32}}\right)^{\frac{1}{m}}\right)_{-m} 
\left(\frac{\Lambda_{31}}{\Lambda_{32}};\left(\frac{\Lambda_{31}}{\Lambda_{32}}\right)^{\frac{1}{m}}\right)_{1-m} \cdot 
\left(\frac{\Lambda_{21}\Lambda_{31}}{\Lambda_{22}\Lambda_{32}};\left(\frac{\Lambda_{31}}{\Lambda_{32}}\right)^{\frac{1}{m}}\right)_{1-m} 
\end{align*}
We denote by $S_1^\alpha$ the summand of $S_1$ associated to $\alpha$ above. Here we use
\[
\frac{1}{1-\frac{\Lambda_{21}}{\Lambda_{22}}} = \frac{1}{m}\sum_{\alpha=1}^m \frac{1}{1-\zeta^\alpha \left(\frac{\Lambda_{21}}{\Lambda_{22}}\right)^{\frac{1}{m}}}
\]
where $\zeta = e^{2\pi i/m}$ and $(\Lambda_{21}/\Lambda_{22})^{1/m}$ is one specific choice of $m$-th root. Recall that in order for the statement of Proposition \ref{nonisoA} to be correct, we should eventually take the sum over all $m$ choices of $(\Lambda_{32}/\Lambda_{31})^{1/m}$ as well, but we fix one specific choice for now.

\par Following the idea of \textbf{Step I}, we may supplement $S_1$ with contribution from the broken orbit $A\rightarrow E_1 \rightarrow E_{1,2}$ and expect the sum to vanish. Recall that the tangent character along $AE_1$ is $\Lambda_{22}\Lambda_{32}/\Lambda_{21}\Lambda_{31}$, and along $E_1E_{1,2}$, or $CE_{1,2}$, is $\Lambda_{32}/\Lambda_{31}$. Fixing $m$, let $G_{E_1}(q)$ be the part of $J^{tw,Y}_{E_1}(q)$ as appearing in Proposition \ref{nonisoC} with simple pole at $q = (\Lambda_{32}/\Lambda_{31})^{1/m}$. Then, we define the ``recursive'' contribution of $A\rightarrow E_1 \rightarrow E_{1,2}$ to the pole as
\begin{align*}
S_2 := & \frac{-1}{1-q\left(\frac{\Lambda_{22}\Lambda_{32}}{\Lambda_{21}\Lambda_{31}}\right)^{-\frac{1}{m}}} \cdot 
\coeff^{tw,Y}_{AE_1}(m) \cdot
G_{E_1}\left(\left(\frac{\Lambda_{22}\Lambda_{32}}{\Lambda_{21}\Lambda_{31}}\right)^{\frac{1}{m}}\right) \\
= & \frac{-1}{1-q\left(\frac{\Lambda_{22}\Lambda_{32}}{\Lambda_{21}\Lambda_{31}}\right)^{-\frac{1}{m}}} \cdot 
\coeff^{tw,Y}_{AE_1}(m) \cdot 
\frac{-1}{ 1-\left(\frac{\Lambda_{22}}{\Lambda_{21}}\right)^{\frac{1}{m}} } \cdot
\coeff^{tw,Y}_{E_1E_{1,2}}(m) \cdot 
J^{tw,Y}_{E_{1,2}}\left( \left(
\frac{\Lambda_{32}}{\Lambda_{31}}\right)^{\frac{1}{m}} \right).
\end{align*}
Here the middle term comes from the evaluation
\[
\frac{-1}{ 1-\left(\frac{\Lambda_{22}}{\Lambda_{21}}\right)^{\frac{1}{m}}} = \left.\frac{-1}{1-q\left(\frac{\Lambda_{32}}{\Lambda_{31}}\right)^{-\frac{1}{m}}}\right|_{q = \left(\frac{\Lambda_{22}\Lambda_{32}}{\Lambda_{21}\Lambda_{31}}\right)^{\frac{1}{m}}}.
\]
Geometrically, it may be regarded as coming from smoothing the node of $A\rightarrow E_1 \rightarrow E_{1,2}$ at $E_1$.

\par In this way, for a fixed choice of $m$-th root of $\Lambda_{32}/\Lambda_{31}$, choices of $m$-th roots of $\Lambda_{22}\Lambda_{32}/\Lambda_{21}\Lambda_{31}$ are in one-to-one correspondence with choices of $m$-th roots of $\Lambda_{22}/\Lambda_{21}$, and different such choices give rise to different values of $S_2$. We denote by $S_2^\alpha$ the value when the $m$-th root of $\Lambda_{22}/\Lambda_{21}$ is taken as the inverse of $\zeta^{\alpha} (\Lambda_{21}/\Lambda_{22})^{1/m}$ as appearing in $S_1^\alpha$, for $1\leq\alpha\leq m$. Direct computation gives
\begin{align*}
S_2^\alpha = & \frac{1-y}{1-q\left(\frac{\Lambda_{32}}{\Lambda_{31}}\right)^{-\frac{1}{m}} \cdot \zeta^\alpha \left(\frac{\Lambda_{22}}{\Lambda_{21}}\right)^{-\frac{1}{m}}} \cdot 
\frac{Q_{11}^mQ_{21}^m}{m^2} \cdot J^{tw,Y}_{E_{1,2}}\left(\left(\frac{\Lambda_{32}}{\Lambda_{31}}\right)^{\frac{1}{m}}\right) \cdot
\frac{ \left(y;\left(\frac{\Lambda_{31}}{\Lambda_{32}}\right)^{\frac{1}{m}}\right)_{-m} }
{ \left(y\frac{\Lambda_{31}}{\Lambda_{32}};\left(\frac{\Lambda_{31}}{\Lambda_{32}}\right)^{\frac{1}{m}}\right)_{1-m} } \\ 
& \cdot \frac{1}{1- \zeta^{-\alpha} \left(\frac{\Lambda_{22}}{\Lambda_{21}}\right)^{\frac{1}{m}}} \cdot \left(\frac{\Lambda_{31}}{\Lambda_{33}}, 1;\left(\frac{\Lambda_{31}}{\Lambda_{32}}\right)^{\frac{1}{m}}\right)_{-m} 
\left(\frac{\Lambda_{31}}{\Lambda_{32}};\left(\frac{\Lambda_{31}}{\Lambda_{32}}\right)^{\frac{1}{m}}\right)_{1-m} \\
& \cdot \frac{\left(1; \zeta^\alpha \left(\frac{\Lambda_{21}}{\Lambda_{22}}\right)^{\frac{1}{m}} \left(\frac{\Lambda_{31}}{\Lambda_{32}}\right)^{\frac{1}{m}} \right)_{-m} 
\left(\frac{\Lambda_{21}\Lambda_{31}}{\Lambda_{22}\Lambda_{32}};\zeta^\alpha \left(\frac{\Lambda_{21}}{\Lambda_{22}}\right)^{\frac{1}{m}} \left(\frac{\Lambda_{31}}{\Lambda_{32}}\right)^{\frac{1}{m}} \right)_{1-m} }
{\left(\frac{\Lambda_{22}}{\Lambda_{21}}; \left(\frac{\Lambda_{31}}{\Lambda_{32}}\right)^{\frac{1}{m}}\right)_{-m}}.
\end{align*}
We have re-arranged certain terms in $S_2^\alpha$ to turn it into the above form very similar to $S_1^\alpha$.

\par For \textbf{Step I}, it remains now to prove $S_1^\alpha + S_2^\alpha = 0$ for any $\alpha$, under the specialization of parameters in Proposition \ref{nonisoA}. Recall that there is exactly one $T$-invariant orbit from $A$ to each point in the $T$-fixed point component $\Omega$ except $E_{1,2}$, for which one would need the broken orbit $A\rightarrow E_1\rightarrow E_{1,2}$. We denote such space of orbits by $\M_A(\Lambda_2/\Lambda_1)$. Our idea is to write $S_1^\alpha + S_2^\alpha$ as torus fixed point localization\footnote{Let $M$ be a smooth projective variety where torus $\tau$ acts with isolated fixed points, and $\mathcal{F}$ be a vector bundle over $M$. Then, $$\chi(M;\mathcal{F}) = \sum_{p\in\text{Fix}_\tau(M)}\frac{\mathcal{F}|_p}{\Eu_\tau(N_pM)}.$$ Here $\Eu_\tau$ denotes the $\tau$-equivariant K-theoretic Euler class, defined by $\Eu_\tau(V) = \sum_{i} (-1)^i \Lambda^i V^\vee$.} of $\chi(\M_A(\Lambda_2/\Lambda_1), (1-y)\mathcal{F})$ for certain sheaf $\mathcal{F}$, so that
\begin{itemize}
    \item $\frac{1}{1-\zeta^\alpha \left(\frac{\Lambda_{21}}{\Lambda_{22}}\right)^{\frac{1}{m}}}$ in $S_1^\alpha$ and $\frac{1}{1- \zeta^{-\alpha} \left(\frac{\Lambda_{22}}{\Lambda_{21}}\right)^{\frac{1}{m}}}$ in $S_2^\alpha$ appear as the Euler class of normal bundle at the two fixed points in $\M_A(\Lambda_2/\Lambda_1)$;
    \item all remaining factors of $S_1^\alpha$ and $S_2^\alpha$ go to $\mathcal{F}$, which is well-defined under the specialization $\Lambda_{2s}=1, \Lambda_{3s} = \Lambda_s, Q_{is} = Q_i, y=1$.
\end{itemize}
More precisely, we define $\M_A(\Lambda_2/\Lambda_1)$ as the space of \textbf{maximal balanced broken orbits} from $A$ with tangent character $\Lambda_2/\Lambda_1$. In other words, we consider chains of $\C P^1$'s originating from $A$ that satisfies 
\begin{itemize}
    \item[(i)] the tangent $T$-character at $A$ is $\Lambda_2/\Lambda_1$;
    \item[(ii)] the tangent $T$-characters of the two components at each node of the chain are inverse to each other;
    \item[(iii)] such chain can no longer be elongated.
\end{itemize}
Such chains must terminate at a point in $\Omega$. As a set, $\M_A(\Lambda_2/\Lambda_1)$ is thus in one-to-one correspondence with $\Omega$, and it inherits naturally a reduced scheme structure from $\Omega \simeq \C P^1$ by the ``evaluation map'' sending each broken orbit to its end point.

\par We introduce a fictional $\widetilde{T}$-action on $\M_A(\Lambda_2/\Lambda_1) \simeq \C P^1$ so that $AE_2$ and $AE_1E_{1,2}$ are the two fixed points, and the tangent characters at the two points are $(\zeta^\alpha (\Lambda_{21}/\Lambda_{22})^{1/m})^{\mp 1}$. In particular,
\[
\Eu(N_{AE_2}\M_A(\Lambda_2/\Lambda_1)) = 1-\zeta^\alpha \left(\frac{\Lambda_{21}}{\Lambda_{22}}\right)^{\frac{1}{m}}, \quad \Eu(N_{AE_1E_{1,2}}\M_A(\Lambda_2/\Lambda_1)) = 1- \zeta^{-\alpha} \left(\frac{\Lambda_{22}}{\Lambda_{21}}\right)^{\frac{1}{m}}.
\]
Note that this action is different from the existing one on $\Omega\subset Y$, which would have the same fixed points but tangent characters $(\Lambda_{21}/\Lambda_{22})^{\mp 1}$ instead. 

\par We denote by $P = \mathcal{O}(-1)$ the tautological line bundle over $\M_A(\Lambda_2/\Lambda_1) \simeq \C P^1$ with $\wT$-action on fibers
\[
P|_{AE_2} = 1, \quad P|_{AE_1E_{1,2}} = \zeta^{-\alpha} \left(\frac{\Lambda_{22}}{\Lambda_{21}}\right)^{\frac{1}{m}}.
\]

\par We are now ready to find the proper $\mathcal{F}^\alpha = \mathcal{F}^\alpha(P,\Lambda,y,Q,q)$ such that
\begin{align*}
  S_1^\alpha + S_2^\alpha & = \frac{(1-y)\mathcal{F}^\alpha(1,\Lambda,y,Q,q)}{\Eu(N_{AE_2}\M_A(\Lambda_2/\Lambda_1))} + \frac{(1-y)\mathcal{F}^\alpha(\zeta^{-\alpha} \left(\frac{\Lambda_{22}}{\Lambda_{21}}\right)^{\frac{1}{m}},\Lambda,y,Q,q)}{\Eu(N_{AE_1 E_{1,2}}\M_A(\Lambda_2/\Lambda_1))} \\
  & = \chi(\M_A(\Lambda_2/\Lambda_1));(1-y)\mathcal{F}^\alpha). 
\end{align*}
In fact, through comparing directly the expression of $S_1^\alpha$ and $S_2^\alpha$, it is not hard to see that it suffices to prove $J^{tw,Y}_{E_2}(q)$ and $J^{tw,Y}_{E_{1,2}}(q)$ come from the specialization of a common expression in $P$ at $P=1$ and $P = \zeta^{-\alpha} (\Lambda_{22}/\Lambda_{21})^{1/m}$ respectively. This is indeed true because $J^{tw,Y}(q)$ depends entirely on the tautological bundles $P_{is}$ of $Y$, among which only $P_{11}$ restricts to a non-trivial bundle on $\Omega$. Since $P_{11}|_{E_2} = \Lambda_{21}\Lambda_{32}$ and $P_{11}|_{E_{1,2}} = \Lambda_{22}\Lambda_{32}$, its contribution to $S_1^\alpha$ and $S_2^\alpha$ may be identified with that of $P^{m}\Lambda_{21}\Lambda_{32}$ over $\M_A(\Lambda_2/\Lambda_1)$.

\par Under the specialization $\Lambda_{2s}=1, \Lambda_{3s} = \Lambda_s, Q_{is} = Q_i, y=1$, all factors of $\mathcal{F}^\alpha$ are well-defined (that is to say, no non-zero terms in the denominator), and
\[
S_1^\alpha + S_2^\alpha = \chi(\M_A(\Lambda_2/\Lambda_1));(1-y)\mathcal{F}) = 0.
\]
Summing up the terms for all $\alpha$ as well as all choices of $(\Lambda_{32}/\Lambda_{31})^{1/m}$ made at the beginning, which we now denote by $\zeta^{-\beta} \cdot (\Lambda_{32}/\Lambda_{31})^{1/m}$, we obtain
\begin{align*}
0 = & \sum_{\beta}\frac{-1}{1-q \zeta^{\beta} \left(\frac{\Lambda_{32}}{\Lambda_{31}}\right)^{-\frac{1}{m}}} \cdot \coeff^{tw,Y,\beta}_{AE_2}(m) \cdot J^{tw,Y}_{E_2}\left(\zeta^{-\beta}\left(\frac{\Lambda_{32}}{\Lambda_{31}}\right)^{\frac{1}{m}}\right) + \\
& \sum_{\alpha,\beta} \frac{-1}{1-q \zeta^{\alpha + \beta} \left(\frac{\Lambda_{22}\Lambda_{32}}{\Lambda_{21}\Lambda_{31}}\right)^{-\frac{1}{m}}} \cdot \coeff^{tw,Y,\alpha+\beta}_{AE_1}(m) \cdot
G_{E_1}^\beta\left(\zeta^{ - \alpha - \beta} \left(\frac{\Lambda_{22}\Lambda_{32}}{\Lambda_{21}\Lambda_{31}}\right)^{\frac{1}{m}}\right),
\end{align*}
where $G_{E_1}^\beta(q)$ is the (possible) part in $J^{tw,Y}_{E_1}(q)$ with simple pole at $q = \zeta^{-\beta}(\Lambda_{32}/\Lambda_{31})^{1/m}$, and the superscripts over the recursion coefficients are to indicate their dependence on the choices of $m$-th roots. This finishes \textbf{Step I}. 

\par Combining the above equation with what have been proven in Section \ref{IsoRec}, and then comparing with Proposition \ref{nonisoA} (where the summation over $m$-roots of characters is implied), we see that it remains to show 
\[
\overline{G}\left(\zeta^{\gamma} \left(\frac{\Lambda_{22}\Lambda_{32}}{\Lambda_{21}\Lambda_{31}}\right)^{\frac{1}{m}}\right) := (J^{tw,Y}_{E_1} - \sum_{\beta}  G_{E_1}^\beta)\left(\zeta^{\gamma} \left(\frac{\Lambda_{22}\Lambda_{32}}{\Lambda_{21}\Lambda_{31}}\right)^{\frac{1}{m}}\right)
\]
is well-defined and equals $J^X_{E_1}(\zeta^\gamma(\Lambda_{2}/\Lambda_1)^{1/m})$ for any $1\leq \gamma = - \alpha - \beta \leq m$, under the specialization $\Lambda_{2s}=1, \Lambda_{3s} = \Lambda_s, Q_{is} = Q_i, y=1$. Recall that $G_{E_1}^\gamma(q)$ is exactly the principal part of $J^{tw,Y}_{E_1}(q)$ at $q = \zeta^\gamma(\Lambda_{22}\Lambda_{32}/\Lambda_{21}\Lambda_{31})^{1/m}$ for any $\gamma$. Therefore, $\overline{G}(q)$ is by definition free of any such poles, or any poles that would possibly descend to one at $q = \zeta^\gamma(\Lambda_{2}/\Lambda_1)^{1/m}$, which means the quantity above is well-defined. That it equals $J^X_{E_1}(\zeta^\gamma(\Lambda_{2}/\Lambda_1)^{1/m})$ under the specialization follows directly from Proposition \ref{nonisoC} that $G_{E_1}^\beta(q) = 0$ for any $\beta$. In other words, the part we ``borrowed'' from $J^{tw,Y}_{E_1}(q)$ to fill in the contribution $S_2$ of balanced broken orbits really vanishes. This last paragraph is the content of \textbf{Step II}.

\par We end this section with the following remark.
\begin{remark}
The choices of roots of unity and $m$-th roots of $T$-characters that appear in our construction of $T$-action on $\M_A(\Lambda_2/\Lambda_1)$ indicates that a more intrinsic construction of $\M_A(\Lambda_2/\Lambda_1)$ should exist. Morally the space of orbits is not the isomorphic to $\Omega$, but rather with each point an isotropy, and the isotropy group grows as the chain of $\C P^1$ elongates. We refrain ourselves from going too far in this direction since the formal algebraic deduction above suffices for our purposes. Yet it is indeed an interesting topic to consider, of finding a correct and rigorous definition of the space of orbits in between fixed point components.
\end{remark}

\subsection{General case: Step I} \label{StepI}
\par The idea is still to supplement the recursive contribution from each $AE_k\ (k\geq 2)$ with terms coming from balanced broken orbits on the abelian quotient $Y$, so that the sum vanishes under the specialization described in the Proposition. More precisely, for each $AE_k$, we consider chains of $\C P^1$'s starting from $A$ such that
\begin{itemize}
    \item each node is a $T$-fixed point and each irreducible component is the closure of a 1-dim $T$-orbit,
    \item the sum of homological degrees of the irreducible components is equal to that of $AE_k$,
    \item the tangent $T$-character at $A$ is $\Lambda_{v_1+1}/\Lambda_1$, and the tangent $T$-characters of the two components at each node are inverse to each other. 
\end{itemize}
In this section, we only consider the case of $AE_n$ as an example as all other cases are parallel. 

\par For $AE_n$, we denote the space of such balanced broken orbits by $\M_A(\Lambda_{v_1+1}/\Lambda_1)$, following the notation before. Recall that $H^2(Y)$ is generated by the first Chern classes of the tautological bundles $P_{is}$. We denote by $\mathbf{1}_{is}$ the (negative) dual basis of $H_2(Y)$. Then, $AE_n$ has homological degree $\sum_{i=1}^n \mathbf{1}_{i1}$.

\par Orbits in $\M_A(\Lambda_{v_1+1}/\Lambda_1)$ must end at a $T$-fixed point $E$ (regarded as an element in $R$, i.e. a sequence of matrices) which satisfies 
\begin{itemize}
    \item the last matrix of $E$ is $I_{N\times v_n,v_1+1}$.
    \item and for $i\leq n-1$, the $i$-th matrix of $E$ is $a_i I_{v_{i+1}\times v_i} + b_i I_{v_{i+1}\times v_i, v_1+1}$ for $a_i,b_i\in\C$ not both zero.
\end{itemize}
The definition of $I_{v\times w}$ and $I_{v\times w,k}$ was given at the beginning of Section \ref{NonisoRec}. $E_n$ itself obviously satisfies the two conditions. Moreover, every $E$ that satisfies the above conditions determines a unique orbit in $\M_A(\Lambda_{v_1+1}/\Lambda_1)$. In fact, the number of irreducible components of the orbit determined by $E$ depends on how many of those $a_i$ vanish.

\par $\M_A(\Lambda_{v_1+1}/\Lambda_1)$ is thus identified with the locus of possible endpoints $E\in Y$, which we denote still by $\Omega$, as a set, and inherits from the latter a scheme structure. Geometrically it is a tower of $\C P^1$-fiber bundles. The only tautological bundles of $Y$ that restrict non-trivially to $\Omega$ are $P_{i1}$ for $1\leq i\leq n-1$, and they generate the K-ring of $\M_A(\Lambda_{v_1+1}/\Lambda_1)\simeq \Omega$. We denote these bundles by $P_i\ (1\leq i\leq n-1)$.

\par $\Omega$ is invariant under the $\wT$-(and thus $T-$)action on $Y$, and the fixed points are exactly those $E_{\JJ}$ (notation see the beginning of Section \ref{NonisoRec}) for $\{n\}\subset \JJ \subset \{1,2,\cdots,n\}$. 

\par As we have seen in Section \ref{specialcasen2}, however, such $\wT$-action is not the most convenient on $\M_A(\Lambda_{v_1+1}/\Lambda_1)$ for our intended formal computation of fixed-point localization. Following the intuition of the special case of $X = \text{Flag}(1,2;3)$, we consider the (formal) $\wT$-action on $\M_A(\Lambda_{v_1+1}/\Lambda_1)\simeq\Omega$ where the $s$-th row of the $i$-th matrix is scaled by $\Lambda_{is}^{1/m}$ instead of $\Lambda_{is}$. Under this new action, the fixed points are still (balanced broken orbits from $A$ to) $E_{\JJ}$. 

\par For simplicity of notation, given any set $\JJ$ containing elements of $j_1<\cdots<j_{K} = n$, we define for $1\leq k\leq K$ the truncation $\JJ[k] := \{j_1,\cdots,j_k\}$. In this way, $\JJ[K] = \JJ$. Moreover, for any $1\leq i\leq n$, we define $k_{\JJ}[i]$ as the index of the smallest element in $\JJ$ that is not smaller than $i$, i.e. such that $j_{k_{\JJ}[i]-1}<i\leq j_{k_{\JJ}[i]}$. When there is no confusion with $\JJ$, we simply write $k[i] = k_{\JJ}[i]$.

\par We have the following fact regarding the tautological bundles of $\M_A(\Lambda_{v_1+1}/\Lambda_1)$.
\begin{lemma} \label{tautolem}
\[
P_i|_{\JJ} = \left(\prod_{a=i+1}^{j_{k_{\JJ}[i]}} \Lambda_{a,1} \cdot \prod_{a=j_{k_{\JJ}[i]}+1}^{n+1} \Lambda_{a,v_1+1}\right)^{\frac{1}{m}}.
\] 
\end{lemma}
\begin{corollary} \label{tautocor}
For any $1\leq i\leq n$, there exists (formal) $\wT$-equivariant line bundle over $\M_A(\Lambda_{v_1+1}/\Lambda_1)$ whose fiber at $E_{\JJ}$ has $\wT$-character $\lambda_{j_{k_{\JJ}[i]}}$.
\end{corollary}
The bundle is simply $P_i^m/\lambda_{i,1}$. For definition of $\lambda_{i,1}$ and $\lambda_{j_{k_{\JJ}[i]}}$, see the paragraph before Proposition \ref{vanishing1}. Another ingredient that we would need later is the tangent $\wT$-characters.
\begin{lemma} \label{tangentlem}
The tangent space $T_{E_{\JJ}}\M_A(\Lambda_{v_1+1}/\Lambda_1)$ has dimension $n-1$. Moreover, 
\[
T_{E_{\JJ}}\M_A(\Lambda_{v_1+1}/\Lambda_1) = V_1 + V_2 := \sum_{i = j_k\in\JJ, i<n} \left(\frac{\lambda_{j_{k+1}}}{\lambda_{j_k}}\right)^{\frac{1}{m}} + \sum_{i\notin\JJ, i<n} \left(\frac{\lambda_{i}}{\lambda_{j_{k_{\JJ}[i]}}}\right)^{\frac{1}{m}} \ \in \ R(\wT).
\]
More precisely, below are the tangent $\wT$-characters at $E_{\JJ}$:
\begin{align*}
& \left(\frac{\lambda_{j_{k+1}}}{\lambda_{j_k}}\right)^{\frac{1}{m}} = \prod_{a=j_k+1}^{j_{k+1}}\left(\frac{\Lambda_{a,1}}{\Lambda_{a,v_1+1}}\right)^{\frac{1}{m}}, & \text{    for } 1\leq i\leq n-1 \text{ and } i = j_k\in {\mathfrak{J}};\\
& \left(\frac{\lambda_{i}}{\lambda_{j_{k_{\JJ}[i]}}}\right)^{\frac{1}{m}} = \prod_{a=i+1}^{j_{k_{\JJ}[i]}}\left(\frac{\Lambda_{a,v_1+1}}{\Lambda_{a,1}}\right)^{\frac{1}{m}}, & \text{    for } 1\leq i\leq n-1 \text{ and } i\notin {\mathfrak{J}}.
\end{align*}
\end{lemma}

\par Furthermore, with the above notation of truncation, we may obtain a clearer description of the broken orbit from $A$ to $E_{\JJ}$ as well. In fact, it must take the form
\[
A = E_{\phi} = E_{\JJ[0]}\rightarrow E_{\JJ[1]}\rightarrow \cdots\rightarrow E_{\JJ[K-1]}\rightarrow E_{\JJ[K]} = E_{\JJ}
\]
where $K = |\JJ|$. The component $E_{\JJ[k-1]}E_{\JJ[k]}$ has homological degree $\sum_{i=j_{k-1}+1}^{j_k}\one_{i1}$, and its tangent $\wT$-character at $E_{\JJ[k-1]}$ is exactly $\lambda_{j_k}$ . In particular, the tangent character of the last component is always $\lambda_{n} = \Lambda_{n+1,v_1+1}/\Lambda_{n+1,1} \in R(\wT) \rightarrow \Lambda_{v_1+1}/\Lambda_1 \in R(T)$, independent of $\JJ$.

\par The contribution to relevant poles of $J^{tw,Y}_A(q)$ along the broken orbit from $A$ to $E_{\JJ}$ is, by definition, 
\[
S_{\JJ} = \frac{1}{1-q\lambda_{j_1}^{-\frac{1}{m}}}\cdot \prod_{k=1}^{|\JJ| - 1} \frac{1}{1-\lambda_{j_k}^{\frac{1}{m}}\lambda_{j_{k+1}}^{-\frac{1}{m}}} \cdot \frac{\prod_{i=1}^nQ_{i1}^m}{m^{|\JJ|}} \cdot \prod_{k=0}^{|\JJ|-1}\coeff^{tw,Y}_{E_{\JJ[k]}E_{\JJ[k+1]}}(m) \cdot J^{tw,Y}_{E_{\JJ}}\left(\lambda_{n}^{\frac{1}{m}}\right).
\]
Recall that here the recursion coefficient along any 1-dim $\wT$-orbit $ab$ takes the form
\[
\coeff^{tw,Y}_{PQ}(m) = \frac{\Eu(TY)|_a}{\Eu((TY)_{0,2,mD} - 1)|_\phi} \cdot \frac{\Eu((y^{-1}\mathfrak{g}/\mathfrak{s})_{0,2,mD})|_\phi}{\Eu(y^{-1}\mathfrak{g}/\mathfrak{s})|_a}
\]
as we discussed in Section \ref{IsoRec}, where the former is contribution from $TY$ and the latter is that from $y^{-1}\mathfrak{g}/\mathfrak{s}$. Here $D$ is the homological degree of $ab$, $\phi: \C P^1\rightarrow ab$ is the $m$-sheet covering ramified at $a$ and $b$, and the $(-1)$ term in denominator comes from the 1-dim automorphism group of $\C P^1$ with two marked points. Below we re-write the goal of \textbf{Step I}.
\begin{proposition} \label{prop8}
There exists a bundle $\mathcal{F} = \mathcal{F}(P_1,\cdots,P_{n-1})\in K_{\wT}(\M_A(\Lambda_{v_1+1}/\Lambda_1))$, well-defined under the specialization as in Proposition \ref{nonisoA}, such that 
\[
S_{\JJ} = \frac{(1-y) \cdot \mathcal{F}(P_1,\cdots,P_{n-1})|_{E_{\JJ}}}{\Eu(N_{E_{\JJ}}\M_A(\Lambda_{v_1+1}/\Lambda_1))}.
\]
\end{proposition}
The proposition indicates that we may write $\sum_{\JJ} S_{\JJ}$ as the $\wT$-fixed point localization of the Euler characteristic $\chi({\M}_A(\Lambda_{v_1+1}/\Lambda_1);(1-y)\mathcal{F})$, which vanishes as we take $y=1$. It follows then that residues from $S_{\JJ}$ sum up to zero, and thus no genuine pole of $J^X_A$ at $q = (\frac{\Lambda_{v_1+1}}{\Lambda_1})^{1/m}$ arises in this way. 

\par The rest of the section is devoted to the proof of this proposition through analyzing separately the ingredients appearing in $S_{\JJ}$, which boils mostly down to direct computation given all the preparation above.

\subsubsection*{Contribution from $y^{-1}\mathfrak{g}/\mathfrak{s}$}
\par The contribution from the twisting bundle $y^{-1}\mathfrak{g}/\mathfrak{s}$ takes the form 
\[
\prod_{k=0}^{|\JJ|-1} \frac{\Eu((y^{-1}\mathfrak{g}/\mathfrak{s})_{0,2,m\cdot D_{E_{\JJ[k]}E_{\JJ[k+1]}}})|_{\phi_{E_{\JJ[k]}E_{\JJ[k+1]}}}}{\Eu(y^{-1}\mathfrak{g}/\mathfrak{s})|_{E_{\JJ[k]}}}
\]
Recall that we have decomposition
\[
\mathfrak{g}/\mathfrak{s} = \bigoplus_{i=1}^n\bigoplus_{s=1}^{v_i}\left( 
\bigoplus_{r=1,r\neq s}^{v_i}\frac{P_{ir}}{P_{is}} \right),
\]
so we may compute the contribution for each direct summand separately.

\par When $r,s\neq 1$, both $P_{ir}$ and $P_{is}$ are topologically trivial on the entire broken orbit from $A$ to $E_{\JJ}$. It is not hard to see the contribution in these cases is always 1.

\par When $r=1,s\neq 1$, $P_{is}$ still restricts to a topologically trivial bundle. However, $P_{ir} = P_{i1}$ has non-trivial restriction to the component $E_{\JJ[k_{\JJ}[i]-1]}E_{\JJ[k_{\JJ}[i]]}$, and it pulls back to $\mathcal{O}(-m)$ of the domain curve $\mathbb{C}P^{1}$ on this component, with $\widetilde{T}$-characters on endpoints $P_{i1}|_{E_{\JJ[k_{\JJ}[i]-1]}} = P_{i1}|_A$, $P_{i1}|_{E_{\JJ[k_{\JJ}[i]]}} = P_{i1}|_A\cdot(\lambda_{j_{k_{\JJ}[i]}})^{-m/m}$. It then follows from direct computation that the contribution in this case is
\[
\left.\prod_{l=0}^{-m}\left(1-y\frac{P_{is}|_A}{P_{i1}|_A}(\lambda_{j_{k_{\JJ}[i]}})^{\frac{l}{m}}\right) \middle/ \left(1-y\frac{P_{is}|_A}{P_{i1}|_A}\right)\right. .
\]
Note that the product $\prod_{l=0}^{-m}$ in the numerator really means the product $\prod_{l=-m+1}^{-1}$ in the denominator. The contribution above is well-defined under the specialization $\wT\rightarrow T$ and $y=1$, and is restriction of a global bundle on $\M_A(\Lambda_{v_1+1}/\Lambda_1)$ by Corollary \ref{tautocor}. 

\par Similarly when $s=1,r\neq 1$, the contribution is
\[
\prod_{l=1}^m \left(1-y\frac{P_{i1}|_A}{P_{ir}|_A}(\lambda_{j_{k_{\JJ}[i]}})^{\frac{l}{m}}\right).
\]
Once again, it comes from the restriction of a global bundle well-defined as $\widetilde{T}\rightarrow T$ and $y=1$. 

\par More importantly, uniformly for all orbits ${\mathfrak{J}}$, when $i=n,r=v_1+1,s=1$ (which always happens geometrically on the last irreducible component of the broken orbit), the $l=m$ term of the above product gives
\[
1-y \frac{P_{n1}|_A}{P_{nr}|_A}(\lambda_{j_{k_{\JJ}[n]}})^{\frac{m}{m}} = 1 - y \cdot \frac{\Lambda_{n+1,1}}{\Lambda_{n+1,v_1+1}} \cdot \left(\frac{\Lambda_{n+1,v_1+1}}{\Lambda_{n+1,1}}\right)^{\frac{m}{m}} = 1-y.
\]
This is exactly the \textbf{vanishing term} $(1-y)$ that we need in Proposition \ref{prop8}.

\par Note that we required $r\neq s$ in the twisting bundle, so there is no such case for $r=s=1$. Now we have finished the discussion on the twisting bundle.

\subsubsection*{Contribution from $TY$}
\par The contribution of $TY$ takes the form 
\[
\prod_{k=0}^{|\JJ|-1} \frac{\Eu(TY)|_{E_{\JJ[k]}}}{\Eu((TY)_{0,2,m\cdot D_{E_{\JJ[k]}E_{\JJ[k+1]}}} - 1)|_{\phi_{E_{\JJ[k]}E_{\JJ[k+1]}}}}.
\]
Recall that
\[
TY = \bigoplus_{i=1}^n\bigoplus_{s=1}^{v_i}\left(\bigoplus_{r=1}^{v_{i+1}}\frac{P_{i+1,r}\Lambda_{i+1,r}}{P_{is}}\right) \ominus \bigoplus_{i=1}^n\bigoplus_{s=1}^{v_i} 1,
\]
It is not hard to see that the $\oplus_{i,s} 1$ part has trivial total contribution. Moreover, we may discuss the remaining part by analyzing the direct summands separately, and an argument similar to that of the twisting bundle above holds almost verbatim. We omit most of the discussion here, and write out the computation only for the following two cases, for which extra attention is required.

\par One case is when $r=s=1$ and $i\leq n-1$. We still apply the same method here. However, depending on whether $i\in {\mathfrak{J}}$ or $i\notin {\mathfrak{J}}$, $P_{i1}$ and $P_{i+1,1}$ may have nontrivial restriction on two consecutive irreducible components of the broken orbit (as $k_{\JJ}[i+1] = k_{\JJ}[i] + 1$ when $i\in\JJ$), or on the same component. Fortunately, both cases give the same final result
\[
\frac{1}{\prod_{l=1}^{m-1}(1-\lambda_{j_{k_{\JJ}[i]}}^{\frac{l}{m}})\prod_{l=0}^{-m}(1-\lambda_{j_{k_{\JJ}[i]}}^{\frac{m}{m}}\lambda_{j_{k_{\JJ}[i+1]}}^{\frac{l}{m}})}.
\]
It comes from a well-defined global bundle on $\M_A(\Lambda_{v_1+1}/\Lambda_1)$.

\par The other case is when $s = 1,r = v_1+1$ and $i\leq n$. By direct computation, the contribution is
\[
\frac{1}{\prod_{l=1}^m(1-\frac{P_{i1}|_A}{P_{i+1,v_1+1}|_A\Lambda_{i+1,v_1+1}}(\lambda_{j_{k_{\JJ}[i]}})^{\frac{l}{m}})} = \frac{1}{\prod_{l=1}^m (1 - \lambda_{i}^{-1}(\lambda_{j_{k_{\JJ}[i]}})^{\frac{l}{m}})}.
\]
The $l=m$ term of the denominator will vanish under the specialization $\widetilde{T}\rightarrow T$. However, this is exactly what we need:
\begin{itemize}
    \item For $i\in {\mathfrak{J}}$, the $l=m$ term in the denominator gives $(1-1)$. Such terms appear $|\JJ|$ times, and thus cancel completely with the $\Eu(-1)$ terms in the original formula of recursive coefficients which come from the 1-dim automorphisms of double-marked $\C P^1$'s, and appear also $|\JJ|$ times as the broken orbit to $E_{\JJ}$ has $|\JJ|$ irreducible components.
    \item For $i\notin {\mathfrak{J}}$, the product of all such $l=m$ terms give
    \[
    \prod_{i\notin \JJ}\frac{1}{1 - \lambda_{i}^{-1} \lambda_{j_{k_{\JJ}[i]}}} = \sum_{1\leq \alpha_i \leq m, \forall i\notin\JJ} \frac{1}{m^{n-|\JJ|}} \prod_{i\notin \JJ} \frac{1}{1-\zeta^{-\alpha_i}\cdot \lambda_{i}^{-\frac{1}{m}}\lambda_{j_{k_{\JJ}[i]}}^{\frac{1}{m}}},
    \]
    where $\zeta = e^{2\pi i/m}$. The sum over $\alpha_i$ is really over different choices of $m$-th roots of the relevant $\wT$-characters. According to what we have discussed in Section \ref{specialcasen2}, the sum over such choices is always implied, and we will from now assume a fixed choice of $m$-th root for each $\lambda_{i}\ (i\notin \JJ)$ and will disregard the sum over $\alpha_i$. Indeed, the choices of $m$-th root for those $\lambda_{i} \ (i\in\JJ)$ have been implicitly made when we wrote out the formula for $S_{\JJ}$. Hence, we may safely focus on one specific summand 
    \[
    \frac{1}{m^{n-|\JJ|}} \prod_{i\notin \JJ} \frac{1}{1-\lambda_{i}^{-\frac{1}{m}}\lambda_{j_{k_{\JJ}[i]}}^{\frac{1}{m}}},
    \]
    and study its contribution to $S_{\JJ}$. The term $1/m^{n-|\JJ|}$ pairs with the term $1/m^{|\JJ|}$ that appears in the definition of $S_{\JJ}$ to give a globally defined expression independent of $\JJ$, and the product over $i\notin \JJ$ provides exactly the part of $\Eu(T_{E_{\JJ}}\M_A(\Lambda_{v_1+1}/\Lambda_1))$ coming from $V_2$, by Lemma \ref{tangentlem}.
\end{itemize} 
So far we have finished the discussion on the contribution from $TY$.

\subsubsection*{Other terms}
\par Finally, we look at the remaining terms
\[
\prod_{k=1}^{|\JJ| - 1} \frac{1}{1-\lambda_{j_k}^{\frac{1}{m}}\lambda_{j_{k+1}}^{-\frac{1}{m}}} \cdot \frac{1}{1-q\lambda_{j_1}^{-\frac{1}{m}}} \cdot \frac{\prod_{i=1}^nQ_{i1}^m}{m^{|\JJ|}} \cdot J^{tw,Y}_{E_{\JJ}}\left(\lambda_{n}^{\frac{1}{m}}\right).
\]
It is not hard to see that the first factor above gives exactly the part of $\Eu(T_{E_{\JJ}}\M_A(\Lambda_{v_1+1}/\Lambda_1))$ coming from $V_1$ (the part coming from $V_2$ have appeared as contribution from $TY$ above), while all other factors come from the restriction of a global expression over $\M_A(\Lambda_{v_1+1}/\Lambda_1)$, well defined under the specialization described in the Proposition, by arguments similar to those in Section \ref{specialcasen2}.
\begin{remark}
The same algorithm may be applied to (the abelian quotients associated to) more general GIT quotients of vector spaces. Indeed, the only information that we need is
\begin{itemize}
    \item the restriction of $TY$ and $\mathfrak{g}/\mathfrak{s}$ at the $\widetilde{T}$-fixed points on the abelian quotient $Y$;
    \item the structure of the space of balanced broken orbits with given $T$-tangent character and homological degree.
\end{itemize}
\end{remark}

\subsection{General case: Step II} \label{StepII}
\par We prove Proposition \ref{nonisoA} and \ref{nonisoC} in this section, using a re-summation argument similar to that of Section \ref{specialcasen2}. We start by noting that the argument in Section \ref{StepI} for $S_{\JJ}$ can be generalized in two directions
\begin{itemize}
    \item[(i)] We may consider balanced broken orbits starting from and ending at any $E_{\JJ}$, rather than only $A$ and $E_{\JJ}$ with $n\in \JJ$.
    \item[(ii)] We may consider any input $\mathcal{G}$ at the endpoints, regular in a neighborhood of $q = (\Lambda_{v_1+1}/\Lambda_1)^{1/m}$ under the specialization $\wT\rightarrow T, Q_{is}=Q_i, y=1$. One such example is $\overline{G}(q)$, the part of $J^{tw,Y}_{E_1}(q)$ regular at $(\Lambda_{2}/\Lambda_1)^{1/m}$, that we considered in Section \ref{specialcasen2}.
\end{itemize}
For these purposes, we generalize our definition of the recursive contribution $S_{\JJ}$ as follows. For subsets $\II, \JJ \subset \{1,2,\cdots, n\}$ with $\max \II < \min \JJ$, we define
\begin{align*}
S_{\II, \JJ}(\mathcal{G}) := & \frac{1}{1-q\lambda_{j_1}^{-\frac{1}{m}}}\cdot \prod_{k=1}^{|\JJ| - 1} \frac{1}{1-\lambda_{j_k}^{\frac{1}{m}}\lambda_{j_{k+1}}^{-\frac{1}{m}}} \cdot \frac{1}{m^{|\JJ|}} \cdot \prod_{i=1 + \max \II}^{\max \JJ} Q_{i1}^m \\
& \cdot \prod_{k=0}^{|\JJ|-1} \coeff^{tw,Y}_{E_{\II\cup\JJ[k]}E_{\II\cup\JJ[k+1]}}(m) \cdot \mathcal{G}_{E_{\II\cup\JJ}}\left(\lambda_{\max \JJ}^{\frac{1}{m}}\right),
\end{align*}
where $\JJ = \{j_1,\cdots,j_K\}$ with $j_1<\cdots<j_K = \max \JJ$, and $\JJ[k] = \{j_1,\cdots,j_k\}$ is the truncation. 

\par $S_{\II, \JJ}(\mathcal{G})$ may be understood as the recursive contribution to $\mathcal{G}$ along the unique balanced broken orbit from $E_{\II}$ to $E_{\II\cup\JJ}$. In particular, $S_{\phi,\JJ}(J^{tw,Y})$ gives back our $S_{\JJ}$ of the previous section. By definition, $S_{\II, \JJ}(\mathcal{G})$ satisfies the following \textbf{composition law}.
\begin{lemma} \label{compositionlaw}
For $\II, \JJ, \KK\subset \{1,2,\cdots,n\}$ with $\max \II < \min \JJ$ and $\max \JJ < \min \KK$, we have
\[
S_{\II,\JJ}(S_{\II\cup\JJ,\KK}(\mathcal{G})) = S_{\II,\JJ\cup\KK}(\mathcal{G}).
\]
\end{lemma}

\par Now we prove Proposition \ref{nonisoC}. Using the new language introduced above, what we have proved through consideration of $\M_A(\Lambda_{v_1+1}/\Lambda_1)$ in the previous section may be re-written as
\[
\sum_{\JJ, \max\JJ = n} S_{\phi,\JJ}(J^{tw,Y}) = 0
\]
under the specialization described in Proposition \ref{nonisoA}. More generally, we may consider for $k\geq 2$ the space $\M_{C,k}(\Lambda_{v_1+1}/\Lambda_{1})$ of balanced broken orbits starting from $C = E_1$ with tangent character $\Lambda_{v_1+1}/\Lambda_{1}$ and homological degree $\sum_{i=2}^k\one_{i,1}$. The endpoints of such orbits always take the form $E_{\{1\}\cup\JJ}$ where $\min\JJ > 1$ and $\max\JJ = k$. Applying the method of the previous section to $\M_{C,k}(\Lambda_{v_1+1}/\Lambda_{v_1})$, we have
\[
T_k := \sum_{\JJ,\min\JJ>1,\max\JJ = k} S_{\{1\},\JJ}(\mathcal{G}^k) = 0,
\]
under the specialization described in Proposition \ref{nonisoC}, where 
\[
\mathcal{G}^k_{E_{\{1\}\cup\JJ}}(q) = J^{tw,Y}_{E_{\{1\}\cup\JJ}}(q) - \sum_{l=k+1}^n S_{\{1\}\cup\JJ, \{l\}}(J^{tw,Y}).
\]
Here $\mathcal{G}^k_{E_{\{1\}\cup\JJ}}(q)$, an analogue of $\overline{G}(q)$ as in Section \ref{specialcasen2}, is exactly the regular part of $J^{tw,Y}_{E_{\{1\}\cup\JJ}}$, free of poles that would descend to $q = (\Lambda_{v_1+1}/\Lambda_1)^{1/m}$. It is not hard to see that they come from a well-defined global expression $\mathcal{G}^k$ over $\M_{C,k}(\Lambda_{v_1+1}/\Lambda_{v_1})$, as the recursion coefficients depend essentially only on the starting points. It is worth noting that $T_k$ would not have had a well-defined limit as $\wT\rightarrow T$ if we had taken $J^{tw,Y}$ directly as the endpoint input instead of $\mathcal{G}^k$. 

\par Taking the sum of $T_k$ over $2\leq k\leq n$, we will see that only the terms where $|\JJ| = 1$ persist, and thus
\[
\sum_{k=2}^n S_{\{1\}, \{k\}}(J^{tw,Y}) = 0
\]
under the specialization described in Proposition \ref{nonisoC}. Indeed, if $\JJ$ contains at least two elements (and we assume the two largest elements are $j_{-2}$ and $j_{-1}$ respectively with $j_{-2}<j_{-1} = \max\JJ$), then the term $S_{\{1\}, \JJ}$ appears exactly twice in $\sum_{k=2}^n T_k$:
\begin{itemize}
    \item once directly in $T_{j_{-1}}$ with $+$ sign;
    \item once in $T_{j_{-2}}$ by Lemma \ref{compositionlaw} (composition law) with $-$ sign. 
\end{itemize}
The above formula gives us exactly Proposition \ref{nonisoC}. In other words, $\sum_{k} T_k$ provides the correct re-summation of $\sum_k S_{\{1\}, \{k\}}(J^{tw,Y})$ where every summand is well-defined and approaches zero when we take the specialization in Proposition \ref{nonisoC}.

\par Proposition \ref{nonisoA} may be proved following the same idea, where the re-summation gives us
\[
\sum_{k=1}^n S_{\phi,\{k\}}(J^{tw,Y}) = S_{\phi,\{1\}}(\mathcal{G}^1) = \frac{-1}{1-q\lambda_{1}^{-1/m}} \coeff^{tw,Y}_{AC}(m) \mathcal{G}^1_{C}(\lambda_1^{1/m}),
\]
where $\mathcal{G}^1_{C}(q)$ is by definition the regular part of $J^{tw,Y}_C(q)$. By Proposition \ref{nonisoC} which we have just proved, no genuine poles arise in $J^X_C(q)$ from $J^{tw,Y}_C(q)$ under the specialization $\wT\rightarrow T, Q_{is} = Q_i, y=1$, which means that $\mathcal{G}^1_{C}(q)$ descends correctly to $J^X_C(q)$. Hence follows Proposition \ref{nonisoA}.

\subsection{Final conclusions}
We reiterate the two main statements proved in this section as the end. {}
\begin{itemize}
    \item $J^{X}_C$, which is descended from $J^{tw,Y}_C$ as $\widetilde{T}\rightarrow T$, $y=1$ and $Q_{is}=Q_i$, does not have poles at $q = (\Lambda_{v_1+1}/\Lambda_1)^{1/m}$ (although $J^{tw,Y}_C$ has indeed poles at $q = (\lambda_{i,v_1+1}/\lambda_{i,1})^{1/m}$ for $i=2,3,\ldots,n$ under $\widetilde{T}$-action).
    \item $J^{X}_A$, which is descended from $J^{tw,Y}_A$, has only simple pole at $q = (\Lambda_{v_1+1}/\Lambda_1)^{1/m}$, and its residue satisfies the recursive formula
    \[
    \Res_{q = \left(\frac{\Lambda_{v_1+1}}{\Lambda_1}\right)^{\frac{1}{m}}} J^{X}_A(q)\frac{dq}{q} = \frac{\prod_{i=1}^n Q_{i}^{m}}{m}\frac{\Eu(T_AX)}{\Eu(T_{\psi} X_{0,2,mD})}\cdot J^{X}_C\left(\left(\frac{\Lambda_{v_1+1}}{\Lambda_1}\right)^{\frac{1}{m}}\right),
    \]
    where $\psi:\mathbb{C}P^1\rightarrow X$ is the $m$-sheet covering stable map to the 1-dim $T$-invariant orbit $AC$ with tangent $T$-character at $A$ being $\Lambda_{v_1+1}/\Lambda_1$, and $D$ is the homological degree of $AC$.
\end{itemize}
\par The cases of fixed points other than $A$ and tangent characters other than $\Lambda_{v_1+1}/\Lambda_1$ are similar. We have thus proved Theorem \ref{2} (and Corollary \ref{MainCor}, when combined with the following remark).

\subsubsection*{Remarks on $J^X$ being the small $J$-function of $X$}
\par As we mentioned in the Introduction, $J^X$ represents actually the value of $\mathcal{J}(\t;q)$ at $\t = 0$. We prove it by showing that $\mathcal{K}_+$ component of $J^X$ is indeed $1-q$. Since we already know that the $Q^0$-term in $J^X$ is exactly $1-q$, it suffices to show that the $q$-rational function attached to any $Q^{d}$ with $d$ nontrivial has strictly higher $q$-degree in the denominator than in the numerator. Recall that 
\[
J^X = (1-q)\sum_{d\in \mathcal{D}} \prod_{i=1}^n Q_i^{\sum_s d_{is}}\frac{\prod_{i=1}^n\prod_{r\neq s}^{1\leq r,s\leq v_i}\prod_{l=1}^{d_{is}-d_{ir}}(1-\frac{P_{is}}{P_{ir}}q^l)}{\prod_{i=1}^{n-1}\prod_{1\leq r\leq v_{i+1}}^{1\leq s\leq v_i}\prod_{l=1}^{d_{is}-d_{i+1,r}}(1-\frac{P_{is}}{P_{i+1,r}}q^l)\cdot\prod_{1\leq r\leq N}^{1\leq s\leq v_n}\prod_{l=1}^{d_{ns}}(1-\frac{P_{ns}}{\Lambda_{r}}q^l)}.
\]
All terms in either the numerator or the denominator have the form $(1-x\cdot q^l)$, and will contribute by degree $l$ when $l>0$ and by degree $0$ when $l\leq0$. Since $J^X = \sum_{a\in \mathcal{F}}J^X_a\phi^a$, it suffices to prove the statement for all localization coefficients $J^X_a$. Without loss of generality, we consider the localization to the distinguished fixed point $A$, in which case we have
\[
J^X_A = J^X|_{P_{is} = \Lambda_s} = (1-q)\sum_{d\in \mathcal{D}} \prod_{i=1}^n Q_i^{\sum_s d_{is}}\frac{\prod_{i=1}^n\prod_{r\neq s}^{1\leq r,s\leq v_i}\prod_{l=1}^{d_{is}-d_{ir}}(1-\frac{\Lambda_s}{\Lambda_r}q^l)}{\prod_{i=1}^{n}\prod_{1\leq r\leq v_{i+1}}^{1\leq s\leq v_i}\prod_{l=1}^{d_{is}-d_{i+1,r}}(1-\frac{\Lambda_s}{\Lambda_r}q^l)}
\]
where $d_{n+1,r} = 0$ and $v_{n+1}=N$ by convention. The difference of $q$-degrees between the denominator and the numerator is
\[
S = \sum_{i=1}^n\sum_{d_{is}>d_{i+1,r}}^{1\leq s\leq v_i,1\leq r\leq v_{i+1}}\binom{d_{is}-d_{i+1,r}+1}{2} - \sum_{i=1}^n\sum_{d_{is}>d_{ir}}^{1\leq r,s\leq v_i}\binom{d_{is}-d_{ir}+1}{2} - 1.
\]
The $-1$ comes from the extra factor $(1-q)$. We want to show that $S\geq 1$ whenever $d \in \mathcal{D}$ is nontrivial, i.e. whenever at least one of $d_{is}\ (i=1,\cdots,n; s = 1,\cdots,v_i)$ is strictly positive. We will prove the statement by induction on $n$. When $n=1$, this is exactly the grassmannian case and the conclusion is already proved in \cite{Givental-Yan}. Now for larger $n$, if $d_{ns}=0$ for all $s$, then only $i\leq n-1$ terms have non-zero contribution to $S$, so $S\geq 1$ by induction hypothesis of $n-1$. If at least one of $d_{ns}$ is positive, then the $i=n$ terms in $S$ contribute by:
\[
\sum_{d_{ns}>0}^{1\leq s\leq v_n,1\leq r\leq N}\binom{d_{ns}+1}{2} - \sum_{d_{ns}>d_{nr}}^{1\leq r,s\leq v_n}\binom{d_{ns}-d_{nr}+1}{2} - 1,
\] 
which, again by $n=1$ case, is great than or equal to $1$. Then it suffices to show the remaining terms have non-negative contribution. In fact, we may assume $d_{ir}\geq d_{i+1,r}$ for all legitimate values of $i$ and $r$, since otherwise the group of terms $\prod_{l=1}^{d_{is}-d_{i+1,r}}(1-\frac{P_{is}}{P_{i+1,r}}q^l)$ in the denominator with $r=s$ would move to the numerator and give $\prod_{l=d_{ir}-d_{i+1,r}+1}^{0}(1-\frac{\Lambda_r}{\Lambda_r}q^l)$, which contains a factor $1-q^0 = 0$. Hence, $d_{is}>d_{i+1,r}$ whenever $d_{is}>d_{ir}$, which means
\begin{align*}
& \sum_{i=1}^{n-1}\sum_{d_{is}>d_{i+1,r}}^{1\leq s\leq v_i,1\leq r\leq v_{i+1}}\binom{d_{is}-d_{i+1,r}+1}{2} - \sum_{i=1}^{n-1}\sum_{d_{is}>d_{ir}}^{1\leq r,s\leq v_i}\binom{d_{is}-d_{ir}+1}{2} \\
\geq & \sum_{i=1}^{n-1} \sum_{1\leq r,s\leq v_i,d_{is}>d_{ir}} \left[\binom{d_{is}-d_{i+1,r}+1}{2} - \binom{d_{is}-d_{ir}+1}{2}\right] + \sum_{i=1}^{n-1} \sum_{s=1}^{v_i} \binom{d_{is}-d_{i,s+1}+1}{2} \\
\geq & 0.
\end{align*}
This completes the induction argument.

\section{Explicit reconstruction} \label{recon}
\par In this section, we carry out the second half of our proving strategy for Theorem \ref{0}. The enlarged torus action by $\wT$ is no longer necessary, so we focus back onto $\widetilde{J}^{tw,Y}$, the limit of $J^{tw,Y}$ under $\wT\rightarrow T$ (i.e. by taking $\Lambda_{is} = 1$ for $i\leq n$ and $\Lambda_{n+1,s} = \Lambda_s$). We prove that in $T$-equivariant settings, its $\mathcal{P}^W$-orbit recovers the entire $\L^X$ under the specialization $Q_{is} = Q_i$ and $y=1$, when passing from $\L^{tw,Y}$ to $X$. In fact, not even $\widetilde{J}^{tw,Y}$ is crucial here: it may be replaced by any point on $\L^{tw,Y}$ invariant under the action by Weyl group $W$, and we will prove the reconstruction theorem below under these more general settings. 

\par In order to facilitate the proof below, we consider for big $\J$-functions of $Y$ a larger coefficient ring $\Lambda^Y = R[y^{\pm 1},\lambda^{\pm 1}][[Q]]$, where $\lambda = (\Lambda_s)_{s=1}^N$, $Q = (Q_{is})_{i=1,s=1}^{\ n\quad v_i}$, and $R$ is a $\C$-algebra. We require the Adams operations on $\Lambda^Y$ to satisfy $\Psi^k(Q_{is}) = Q_{is}^k$, $\Psi^k(\Lambda_{s}^{\pm 1}) = \Lambda_{s}^{\pm k}$ and $\Psi^k(y^{\pm 1}) = y^{\pm k}$. We assume that there exists an ideal $\Lambda^Y_+\subset\Lambda^Y$ stable under the Adams operation and containing the Novikov variables $Q$, and that $\Lambda^Y$ is complete with respect to the adic topology associated to $\Lambda^Y_+$. Taking the specialization $Q_{is} = Q_i$ and $y=1$, we obtain the coefficient ring $\Lambda^X$ with ideal $\Lambda^X_+$ for big $\J$-functions of $X$, which will not really appear very often in this section. We remove the superscripts $X$ and $Y$ when it is clear by context.

\par For simplicity of notations, we denote by $K(Y)$ the $T$-equivariant K-ring of $Y$, and take an additive basis $\{P^a\}_{a\in A}$ of $K(Y)$ over $R(T) = \C[\lambda^{\pm 1}]$ consisting of monomials of ($T$-equivariant) tautological bundles $P = (P_{is})_{i=1,s=1}^{\ n\quad v_i}$ of $Y$. Here $A\subset \prod_{i=1}^n\mathbb{Z}^{v_i}$ is a finite set of integer-valued vectors $a = (a_{is})_{i=1,s=1}^{\ n\quad v_i}$, and the existence (and finiteness) of such $A$ follows from the structure of $Y$. $K(Y)$ has natural Adams operations $\Psi^k(P_{is}^{\pm 1}) = P_{is}^{\pm 1}$ and $\Psi^k(\Lambda_{s}^{\pm 1}) = \Lambda_{s}^{\pm k}$.

\par We denote by $\Lambda^Y\otimes K(Y)$ the tensor product over $R(T)$. It admits Adams operations $\Psi^k$ inherited from those on the two factors, and is naturally endowed with an action by the Weyl group $W = \prod_{i=1}^n S_{v_i}$, where $\sigma = (\sigma_i)_{i=1}^n \in W$ acts by $\sigma(P_{is}) = P_{i,\sigma_i(s)}$, $\sigma(Q_{is}) = Q_{i,\sigma_i(s)}$ and trivially on $y$ and $\lambda = (\Lambda_{s})_{s=1}^N$. The Weyl-group-invariant part $(\Lambda^Y\otimes K(Y))^W$, under $Q_{is} = Q_i$ and $y=1$, descends surjectively to $\Lambda^X\otimes K(X)$.

\par The main theorem of this section is formulated as below.
\begin{theorem} \label{Reconstruction}
\par Let $I = \sum_{d\in \mathcal{D}} I_d Q^d$ be any $W$-invariant point on $\mathcal{L}^{tw,Y}$ that is $\Lambda^Y_+$-close to $(1-q)$. Then, the family 
\[
\mathcal{F} = \bigcup_{\epsilon_a\in\Lambda^Y_+, C_a(q)\in\Lambda^Y_+[q,q^{-1}]} \ 
\sum_{d\in \mathcal{D}} 
I_d Q^d\cdot \exp{\sum_{k>0} \sum_{a\in A} \frac{\Psi^k(\epsilon_a)P^{ka}q^{k\langle a,d\rangle}}{k(1-q^k)}} \cdot \exp{\sum_{a\in A} C_a(q)P^aq^{\langle a,d\rangle}},
\]
where $Q^d=\prod_{i,s}Q_{is}^{d_{is}}$ and $P^a q^{\langle a,d\rangle} = \prod_{i,s} P_{is}^{a_{is}}q^{a_{is}d_{is}}$, lies entirely in $\mathcal{L}^{tw,Y}$. Moreover, when
\[
\epsilon = \sum_{a\in A}\epsilon_a P^a \in \Lambda^Y\otimes K(Y) \quad\text{ and }\quad C(q) = \sum_{a\in A} C_a(q) P^a \in \Lambda^Y\otimes K(Y)\ [q,q^{-1}]
\]
are both $W$-invariant, the above family lies in $(\mathcal{L}^{tw,Y})^W$, and covers the entire ($\Lambda^X_+$-small germ at $(1-q)$ of) $\mathcal{L}^X$ under the specialization $y = 1$ and $Q_{is} = Q_i\ (\forall 1\leq i\leq n, 1\leq s\leq v_i)$.
\end{theorem}
Geometrically, the $\epsilon$-part sweeps over different ruling spaces of $\mathcal{L}^X$, and the $C(q)$-part exhausts each fixed ruling space. Taking $I = \widetilde{J}^{tw,Y}$ as the starting point recovers Theorem \ref{0}. 

\par The $W$-invariance of $\epsilon$ and $C(q)$ can be translated as follows. Let $A^{\leq} = \{a=(a_{js})\in A|a_{j,s_1}\leq a_{j,s_2} \text{ if } s_1<s_2\}$ be the subset consisting of ``increasing'' indices $a$. Then $\{P^a\}_{a\in A^{\leq}}$ as well as their images under $W$-action form an additive basis of $K(Y)$, and any $W$-invariant $C(q)$ can thus be written as the symmetrization
\[
C(q) = \sum_{\sigma \in W}\left( \sum_{k\in\mathbb{Z}} \sum_{a\in A^{\leq}} \sum_{\delta \in \mathcal{D}} C_{k,a,\delta} \cdot q^k \cdot \prod_{i,s}Q_{i,\sigma_i(s)}^{\delta_{is}} \cdot \prod_{i,s}P_{i,\sigma_i(s)}^{a_{is}}\right)
\]
for $C_{k,a,\delta} \in R$. Similarly, any $W$-invariant $\epsilon$ can be written as the symmetrization
\[
\epsilon = \sum_{\sigma \in W}\left( \sum_{a\in A^{\leq}}\sum_{\delta\in \mathcal{D}} \epsilon_{a,\delta} \cdot \prod_{i,s}Q_{i,\sigma_i(s)}^{\delta_{is}} \cdot \prod_{i,s}P_{i,\sigma_i(s)}^{a_{is}}\right)
\]
for $\epsilon_{a,\delta} \in R$. The $\Lambda^Y_+$-small condition on $\epsilon$ and $C(q)$ requires that $C_{k,a,\mathbf{0}}\in\Lambda^Y_+\cap R$ and $\epsilon_{a,\mathbf{0}}\in\Lambda^Y_+\cap R$.

\par That the entire family lies in $\mathcal{L}^{tw,Y}$ follows from Lemma \ref{pfdpreserve} through an argument similar to that appearing in \cite{Givental:perm7}. Applying to $I = \sum_{d\in \mathcal{D}} I_d\prod_{j,s}Q_{js}^{d_{js}}$ the PFD operators $O_1$ and $O_2$, where
\[
O_1 = \exp{\sum_{k>0}\sum_a\frac{\Psi^k(\epsilon'_a)P^{ka}q^{kaQ\partial_Q}}{k(1-q^k)}}, \quad O_2 = \exp{\sum_a C_a(q)P^aq^{aQ\partial_Q}}.
\]
with $P^aq^{aQ\partial_Q} = \prod_{i,s}(P_{is}q^{Q_{is}\partial_{Q_{is}}})^{a_{is}}$, we obtain the family $\mathcal{F}$ in the theorem but with each $\epsilon_a = \epsilon_a(Q)$ replaced with the independent variables $\epsilon'_a$. By Lemma \ref{pfdpreserve}, such new family $\mathcal{F}'$ obtained from $O_1\circ O_2(I)$ lies entirely in the cone $\mathcal{L}^{tw,Y}$, but only with the \textbf{extended coefficient ring} $(\Lambda^Y)' = \Lambda[\epsilon']$ where $\epsilon' = (\epsilon'_a)_{a\in A}$. Then, we obtain $\mathcal{F}$ back by restricting the coefficients back to $\Lambda^Y$ through the map sending $\epsilon'_a$ to $\epsilon_a$. Such restriction of coefficients $(\Lambda^Y)'\rightarrow \Lambda^Y$ is legitimate here because it is a ring homomorphism which preserves also the Adams operations. Moreover, when $\epsilon$ and $C(q)$ are both $W$-invariant, the resulting family resides entirely in $(\mathcal{L}^{tw,Y})^W$, because the restriction of coefficients is naturally $W$-equivariant as well. The above argument is necessary because $\epsilon_a$ depends \emph{a priori} on the Novikov variables $Q$, and thus may be acted upon non-trivially by the $q$-difference operators $P^aq^{aQ\partial_Q}$.

\par It remains only to prove that $\mathcal{F}$ covers the entire $\mathcal{L}^X$ under the specialization of coefficients $y=1$ and $Q_{is}=Q_i$. In fact, such surjectivity may be further reduced to surjectivity to $(\mathcal{L}^{tw,Y})^W$ (before the specialization), as it is not hard to see that the specialization $y=1$ and $Q_{is}=Q_i$ gives by definition a surjective map in between the domains of big $\J$-functions
\[
(\mathcal{K}_+^Y)^W = \left(\Lambda^Y\otimes K(Y)\right)^W[q,q^{-1}] \ \longrightarrow \ \mathcal{K}_+^X = \Lambda^X\otimes K(X)\ [q,q^{-1}],
\]
where $P_{is}$ are sent to K-theoretic Chern roots of $V_i$ for any $1\leq i\leq n$ and $1\leq v_i\leq n$, and thus a surjective map from the image $(\mathcal{L}^{tw,Y})^W = \mathcal{J}^{tw,Y}((\mathcal{K}_+^Y)^W)$ to $\L^X = \J^X(\mathcal{K}_+^X)$. 

\par In other words, it suffices now to prove that for any $W$-invariant input $\t = \t(q) \in (\mathcal{K}_+^Y)^W$, $\mathcal{J}^{tw,Y}(\t)$ can indeed be represented by some point of $\mathcal{F}$. We will first prove the claim \textit{modulo the Novikov variables} $Q$. As $Q_{is} = 0$, the family becomes 
\[
\overline{\mathcal{F}} = I_0\cdot \exp{\left(\sum_{k>0}\sum_{a\in A}\frac{\Psi^k(\epsilon_{a,\mathbf{0}})P^{ak}}{k(1-q^k)}\right)}\cdot \exp{\left(\sum_{k\in\mathbb{Z}}\sum_{a\in A} C_{k,a,\mathbf{0}} P^a \cdot q^k\right)}.
\]
The $W$-action now involves only $P_{is}$. Since $I = \sum_d I_dQ^d$ is $W$-invariant, so is $I_0$; since $\epsilon$ and $C(q)$ are $W$-invariant, $\epsilon_{a,\mathbf{0}}$ and $C_{k,a,\mathbf{0}}$ are invariant under $W$-action on $a$. Therefore, the above family may be re-written as
\[
\overline{\mathcal{F}} = I_0\cdot \exp{\left(\sum_{k>0}\sum_{a\in A^{\leq}}\frac{\Psi^k(\epsilon_{a,\mathbf{0}}\sum_{\sigma\in W}P^{\sigma(a)})}{k(1-q^k)}\right)}\cdot \exp{\left(\sum_{k\in\mathbb{Z}}\sum_{a\in A^{\leq}} (C_{k,a,\mathbf{0}} \sum_{\sigma\in W}P^{\sigma(a)}) \cdot q^k\right)}.
\]
This is now actually a question in the quantum K-theory of a point: does $\overline{\mathcal{F}}$ cover (the germ at $(1-q)$ of) $\L^{pt}$ under coefficient ring $\overline{\Lambda} = \left(\Lambda^Y/(Q)\otimes K(Y)\right)^W = \Lambda^Y/(Q)\otimes K^W(Y)$, as $\epsilon_{a,\mathbf{0}}$ and $C_{k,a,\mathbf{0}}$ vary over $\Lambda^Y_+\cap R$? By deduction in \cite{Givental:perm7}, the $\overline{\Lambda}_+$-germ at $(1-q)$ of $\mathcal{L}^{pt}$ is covered by the ruling spaces parameterized by $t$
\[
\mathcal{L}^{pt} = \bigcup_{t\in\overline{\Lambda}_+} (1-q) \cdot \exp{\left(\sum_{k>0}\Psi^k(t)/k(1-q^k)\right)} \cdot \left(1 + \overline{\Lambda}_+\cdot \mathcal{K}_+\right),
\]
where $\overline{\Lambda}_+ = \Lambda^Y_+/(Q)\otimes K^W(Y)$ and $\mathcal{K}_+$ is the Lagrangian subspace consisting of Laurent polynomials in $q$ as appearing in the usual loop space formalism. Let 
\[
I_0 = (1-q) \cdot \exp{\left(\sum_{k>0}\Psi^k(t^0)/k(1-q^k)\right)} \cdot (1+f^0(q)),
\]
where $t^0\in\overline{\Lambda}_+$ and $f^0(q)\in \overline{\Lambda}_+[q,q^{-1}]$. Then, for any other point 
\[
(1-q) \cdot \exp{\left(\sum_{k>0}\Psi^k(t)/k(1-q^k)\right)} \cdot (1+f(q))
\]
on $\mathcal{L}^{pt}$, it is covered by $\overline{\mathcal{F}}$ as we may simply take
\begin{align*}
\epsilon_{a,\mathbf{0}} \cdot \sum_{\sigma\in W}P^{\sigma(a)} & = t - t^0 \quad \in \overline{\Lambda}_+ = \Lambda^Y_+/(Q)\otimes K^W(Y), \\
\sum_{k\in\mathbb{Z}}\sum_{a\in A^{\leq}} (C_{k,a,\mathbf{0}} \sum_{\sigma\in W}P^{\sigma(a)}) \cdot q^k & = \ln \left( (1+f^0(q))^{-1}\cdot (1 + f(q))\right) \quad \in 1 + \overline{\Lambda}_+\cdot \mathcal{K}_+.
\end{align*}
Here the inverse of $1+f^0(q)$ exists in $1 + \overline{\Lambda}_+\cdot \mathcal{K}_+$ thanks to our assumption that $\Lambda^Y$ is complete with respect to the adic topology determined by $\Lambda^Y_+$, which means that we may use the formal Taylor expansion $(1+x)^{-1} = \sum_{n\geq 0} (-x)^n$; for the same reason, $\ln(1+x) = \sum_{n\geq 1} (-x)^{n+1}/n$ is well-defined on $1 + \overline{\Lambda}_+\cdot \mathcal{K}_+$.
\begin{remark} \label{uniqueruling}
It is implied from the above argument that the ruling spaces of $\L^{pt}$ do not intersect. In other words, the decomposition of $I\in\L^{pt}$ in terms of $t$ and $f(q)$ is always unique. Indeed, if a point lies in such intersection, then there exists $t^1\neq t^2 \in \overline{\Lambda}_+$ and $f^1(q), f^2(q)\in \overline{\Lambda}_+[q,q^{-1}]$ such that
\[
\exp{\left(\sum_{k>0}\Psi^k(t^1-t^2)/k(1-q^k)\right)} = (1+f^1(q))^{-1}\cdot (1 + f^2(q)).
\]
Apparently, LHS lives in $1 + \mathcal{K}_-$ while RHS lives in  $1 + \mathcal{K}_+$ (see Section \ref{Jfunc}), but $\mathcal{K}_-$ and $\mathcal{K}_+$ have only zero intersection.
\end{remark}

\par The case involving the Novikov variables is not substantially different. In fact, the surjectivity modulo $Q$ gives naturally the surjectivity of tangent maps at all points (modulo $Q$), and a formal \textbf{implicit function theorem} would then imply the surjectivity with $Q$. Since we are interested in $W$-invariant points on the image cone, we need an equivariant version of implicit function theorem.
\begin{lemma}[$W$-Equivariant Formal Implicit Function Theorem] \label{formalIFT}
Let $F = F(x,y)$ be a formal function (i.e. power series in terms of the inputs) satisfying $F(0,0) = 0$. 
\begin{itemize}
    \item If the tangent map $A = \frac{\partial F}{\partial y}|_{(0,0)}$ is invertible, then there exists a unique formal function $y=y(x)$ such that $y(0) = 0$ and $F(x,y(x))=0$. 
    \item If further $W$ acts linearly on both the domain and the co-domain, and $F$ is $W$-equivariant in the sense that $w\circ F(w^{-1}x,w^{-1}y) = F(x,y), \ \forall w\in W$, then $y=y(x)$ is also $W$-equivariant.
\end{itemize} 
\end{lemma}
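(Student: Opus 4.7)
The plan is to prove the non-equivariant statement by the usual degree-by-degree construction, and then to deduce equivariance automatically from uniqueness. Write $F(x,y) = Ay + G(x,y)$ where $G$ collects the purely-$x$ terms (which vanish at $x=0$) together with the terms of order $\geq 2$ in $y$ or of mixed order $\geq 2$. Rearranging $F(x,y)=0$ as
\[
y = -A^{-1} G(x, y),
\]
I would iterate $y_{k+1} = -A^{-1} G(x, y_k)$ starting from $y_0 = 0$. In the appropriate adic topology (see the last paragraph) $G$ raises filtration order: if $y_k$ and $y_k'$ agree to order $n$, then $G(x, y_k)$ and $G(x, y_k')$ agree to order $n+1$. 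Hence the coefficients of $y_k$ stabilize in each fixed degree and one obtains a formal solution $y = y(x)$ with $y(0) = 0$. The same filtration argument yields uniqueness: if $y$ and $y'$ are two solutions, then $A(y-y') = G(x,y') - G(x,y)$, whose right-hand side vanishes to one order higher than $y-y'$, forcing $y = y'$ by induction on the order of vanishing.

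For the equivariance clause, let $y(x)$ be the unique solution constructed above and, given $w\in W$, define $\widetilde{y}(x) := w\circ y(w^{-1}x)$. Clearly $\widetilde{y}(0)=0$. Using the hypothesis $F(x,y) = w\circ F(w^{-1}x, w^{-1}y)$, I compute
\[
F(x,\widetilde{y}(x)) \;=\; w\circ F\bigl(w^{-1}x,\, w^{-1}(w\circ y(w^{-1}x))\bigr) \;=\; w\circ F(w^{-1}x, y(w^{-1}x)) \;=\; w\circ 0 \;=\; 0.
\]
Thus $\widetilde{y}$ is also a formal solution vanishing at the origin, and the uniqueness clause forces $\widetilde{y}(x) = y(x)$, i.e. $w\circ y(w^{-1}x) = y(x)$ for all $w\in W$. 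This is exactly the $W$-equivariance of $y$.

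The proof is entirely formal once one has fixed the category in which "formal function" lives, and that is the only place where one has to exercise care. In the intended application the variables $x$ and $y$ are infinite-dimensional, taking values in $\Lambda_+$-adically complete modules (the inputs of the big $\mathcal{J}$-function and the parameters $(\epsilon, C(q))$ of the family, respectively), so the filtration driving the iteration is the $\Lambda_+$-adic one, and $G$ genuinely raises $\Lambda_+$-order because both the cone parametrization and the $\mathcal{J}$-function are small perturbations of $(1-q)$. The $W$-action on $\Lambda$ extends linearly to the completions, commuting with the iteration $y_{k+1} = -A^{-1} G(x, y_k)$; this makes the equivariance argument above rigorous without any additional input. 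The only substantive obstacle, therefore, is checking that the contraction property of $G$ in the $\Lambda_+$-adic topology is inherited from the hypothesis that $A$ is invertible and that $F$ is a formal expansion starting at $(0,0)$—after which everything reduces to the two short arguments above.
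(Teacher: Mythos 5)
Your existence and uniqueness argument is identical to the paper's in all but notation: the paper defines $G_{\text{paper}}(x,y) = y - A^{-1}F(x,y)$ and iterates $y_{k+1} = G_{\text{paper}}(x,y_k)$, which is literally the same map as your $y_{k+1} = -A^{-1}G(x,y_k)$ once you observe $-A^{-1}(F(x,y)-Ay) = y - A^{-1}F(x,y)$; the contraction-in-the-adic-filtration argument is the same in both write-ups. Where you genuinely diverge is in the equivariance clause. The paper proves equivariance \emph{of each iterate} by induction, checking directly that $w\circ y_{k+1}\circ w^{-1} = y_{k+1}$ (using that $A$ is $W$-equivariant, which follows because $F$ is and $0$ is a fixed point), and then passes to the limit. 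You instead exploit the already-established \emph{uniqueness}: you observe that $\widetilde{y}(x) := w\circ y(w^{-1}x)$ also vanishes at the origin and satisfies $F(x,\widetilde{y}(x)) = 0$, and conclude $\widetilde{y} = y$. Both are correct. Your route is cleaner and construction-independent (once uniqueness is in hand, equivariance is automatic from the equivariance of the equation and its boundary condition), whereas the paper's route is more explicit and shows, as a byproduct, that the Newton-type approximants themselves are equivariant at every stage — a fact you do not need but could be useful in other contexts. Neither argument requires anything beyond what is already present; yours simply factors it through uniqueness instead of through the iteration.
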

\begin{proof}
The proof of this lemma is rather direct. Let $G(x,y) = y - A^{-1}F(x,y)$, then the lemma is equivalent to find a function $y = y(x)$ satisfying $y(x) = G(x,y(x))$. We solve for $y$ by iteration. Let $y_0(x) = 0$ and $y_{k+1}(x) = G(x,y_k(x))$, then it can be shown that 
\[
y_{k+1}(x)\equiv y_{k}(x)\ (\mod\ x^{k+1})\quad \text{ as long as }\quad\quad y_{k}(x)\equiv y_{k-1}(x)\ (\mod\ x^{k}).
\]
To start with, $y_1(x) = - A^{-1}F(x,0)$ does not have constant term, so $y_1(x)\equiv y_0(x)\equiv 0\ (\mod\ x)$. The limit $y(x) = y_\infty(x)$ of iteration then exists, and is the unique solution satisfying $y(0)=0$. Moreover, by definition of $y_{k+1}(x)$,
\[
w\circ y_{k+1}\circ w^{-1}(x) = w\circ y_k\circ w^{-1}(x) - w\circ A^{-1}\circ w^{-1}\circ w\circ F(y_k(w^{-1}(x)), w^{-1}x). 
\]
Thanks to the $W$-equivariance of both $F$ and $A$, $y_{k+1}(x)$ is $W$-equivariant whenever $y_k(x)$ is.
\end{proof}

\par Recall that our goal is to show that for any given $\t(q)\in (\Lambda^Y_+\otimes K(Y))^W[q,q^{-1}]$, there exists $\epsilon$ and $C(q)$ such that
\[
[\mathcal{F}]_+ = 1 - q + \t(q),
\]
where $[\cdot]_+$ means projection along $\mathcal{K} = \mathcal{K}_+ \oplus \mathcal{K}_-$ to $\mathcal{K}_+$. Now for a fixed $\t$, we take $x = Q$ and $y = (\epsilon,C(q))$, and $F(x,y) = F_{\t}(x,y) = [\mathcal{F}]_+ - (1 - q + \t(q))$. More precisely, the inputs of $F$ are the vector of Novikov variables and the vector of coefficients $\epsilon_a$ and $C_{k,a}$, as appearing in the expansion of $\epsilon$ and $C(q)$, while the outputs are the expansion coefficients $F_{a,k}(x,y)$, as appearing in $F(x,y) = \sum_{k\in\mathbb{Z},a\in A} F_{a,k}(x,y) P^a q^k$. Such formal function $F$ has coefficients in $\Lambda^Y_+/(Q)$ which depend on $I$ and $\t$. Under this interpretation, our goal is to solve for formal functions $\epsilon_a = \epsilon_a(Q)$ and $C_{a,k} = C_{a,k}(Q)$ such that $F(Q,\epsilon(Q),C(Q,q)) = 0$. Note that the solutions $\epsilon_a(Q)$ and $C_{a,k}(Q)$ live naturally in $\Lambda^Y_+/(Q)[[Q]]$, which may be identified with $\Lambda^Y_+$ by construction of $\Lambda^Y$. 

\par We attempt to use Lemma \ref{formalIFT}. First, by what we have proved in the $\mod Q$ case, there exists $\epsilon_{a,\mathbf{0}}$ and $C_{a,k,\mathbf{0}}$, both free of Novikov variables $Q$, that solves $F(0,(\epsilon_{a,\mathbf{0}})_{a\in A}, (C_{a,k,\mathbf{0}})_{a\in A,k\in\mathbb{Z}}) = 0$. Therefore, we may assume $F(0,0) = 0$ under translation of $\epsilon_a$ and $C_{a,k}$ by $\epsilon_{a,\mathbf{0}}$ and $C_{a,k,\mathbf{0}}$ respectively.

\par Second, that $\partial F/\partial y$ is invertible when $x=y=0$ may also be attributed to what we have proved in the $\mod Q$ case. In fact, it is not hard to see that the invertibility of $\partial F/\partial y|_{(0,0)}$ follows directly from the unique decomposition result in Remark \ref{uniqueruling}. Alternatively, consider the base point where $I = I_0 = 1-q$, $Q = 0$, $\epsilon=0$ and $C(q) = 0$. $\partial F/\partial y$ obtains a rather explicit expression in this case
\[
\frac{\partial F}{\partial\epsilon_a} = P^a, \frac{\partial F}{\partial C_{a,k}} = (1-q) \cdot q^k P^a.
\]
Any tangent vector $f(q)\in\Lambda_+\otimes K(Y)[q,q^{-1}]$ of the codomain thus decomposes uniquely into the effect of $\epsilon$ and $C(q)$ by
\[
f(q) = f(1) + (1-q)\cdot \frac{f(q)-f(1)}{1-q},
\]
as the first term on RHS resides in $\Lambda^Y_+/(Q)\otimes K(Y)$ while the second term in $(1-q)\cdot\Lambda^Y_+/(Q)\otimes K(Y)[q,q^{-1}]$. $\partial F/\partial y$ is therefore invertible in this special case. In reality, $\partial F/\partial y|_{(0,0)}$ describes more generally the tangent map at
\[
I_0\cdot \exp{\left(\sum_{k>0}\sum_{a\in A}\frac{\Psi^k(\epsilon_{a,\mathbf{0}})P^{ak}}{k(1-q^k)}\right)}\cdot \exp{\left(\sum_{k\in\mathbb{Z}}\sum_{a\in A} C_{k,a,\mathbf{0}} P^a \cdot q^k\right)} \in \mathcal{L}^{pt},
\]
where we still have $Q=0$ but possibly $I_0\neq 1-q$, $\epsilon\neq 0$ and $C(q)\neq 0$. According to our argument before Remark \ref{uniqueruling}, such point differs from $(1-q)$ by the multiplication of an invertible element, and thus $\partial F/\partial y|_{(0,0)}$ is still invertible in this more general case.

\par Finally, all remains to check is the $W$-equivariance of $F$. This follows from the assumption that our starting point $I = \sum_{d} I_d Q^d \in \mathcal{L}^{tw,Y}$ is $W$-invariant. For permutation $\sigma = (\sigma_1, \cdots, \sigma_n)\in W$ and index $a = (a_{is}) \in A$, define $\sigma(a)\in A$ by $\sigma(a)_{i,\sigma_i(s)} = a_{is}$. Then, 
\begin{itemize}
    \item the action $\sigma\circ F$ replaces each $P_{is}$ in the formula of the family $\mathcal{F}$ and the intended input $\t$ by $P_{i,\sigma_i(s)}$ and thus $P^a = \prod_{i,s}P_{is}^{a_{is}}$ by $P^{\sigma(a)}$;
    \item the action $F\circ \sigma^{-1}$ effectively replaces $Q_{is}$ by $Q_{i,\sigma_i(s)}$ (thus $Q^d $ by $Q^{\sigma(d)}$), $\epsilon_{a}$ by $\epsilon_{\sigma(a)}$, and $C_{a,k}$ by $C_{\sigma(a),k}$.
\end{itemize}
Note that at the current moment, $\epsilon_a$ and $C_{a,k}$ are regarded as independent variables and not yet functions of the Novikov variables. With these changes incorporated, the family $\mathcal{F}$ becomes
\begin{align*}
& \sum_{d\in\mathcal{D}} I_d|_{P_{is}\mapsto P_{i,\sigma_i(s)}} \cdot Q^{\sigma(d)} \cdot \exp{\left(\sum_{k>0,a\in A} \frac{\Psi^k(\epsilon_{\sigma(a)})P^{k\cdot \sigma(a)}q^{k\langle a,d\rangle}}{k(1-q^k)}\right)} \cdot \exp{\left(\sum_{k\in\mathbb{Z},a\in A} C_{\sigma(a),k} P^{\sigma(a)} q^{\langle a,d\rangle + k}\right)} \\
& = O_1\cdot O_2\cdot \sum_{d\in\mathcal{D}} I_d|_{P_{is}\mapsto P_{i,\sigma_i(s)}} \cdot Q^{\sigma(d)} = O_1\cdot O_2\cdot I,
\end{align*}
which is still itself. Here $O_1$ and $O_2$ are the two PFD operators that appeared at the beginning of this proof. The first equality holds because $\langle \sigma(a),\sigma(d) \rangle = \langle a,d \rangle = \sum_{i,s}a_{is}d_{is}$; the second holds because $I$ is $W$-invariant. In this way, we have proved that $\mathcal{F}$ is $W$-invariant. On the other hand, the intended input $1-q+\t$ is $W$-invariant by our assumption. Therefore, the formal function 
\[
F(Q,\epsilon,C(q)) = [\mathcal{F}(Q,\epsilon,C(q))]_+ - (1-q + t(Q,q))
\]
satisfies $\sigma \circ F \circ \sigma^{-1} = F$. 

\par We may now apply Lemma \ref{formalIFT} to $F$. In conclusion, for any given intended input $\t(q) \in (\Lambda^Y_+\otimes K(Y))^W[q,q^{-1}]$, there exist solutions $\epsilon_a = \epsilon_a(Q) \in \Lambda^Y_+/(Q)[[Q]] = \Lambda^Y_+ \ (a\in A)$ and $C_{k,a} = C_{k,a}(Q)\in \Lambda^Y_+/(Q)[[Q]][q,q^{-1}] = \Lambda^Y_+[q,q^{-1}] \ (k\in\mathbb{Z},a\in A)$, such that the $\mathcal{K}_+$-part of $\mathcal{F}(\epsilon,C(q))$ is $1-q+\t(q)$. In this way, the family $\mathcal{F}$ covers the entire $W$-invariant part of $\L^{tw,Y}$, and thus the entire $\L^X$ under the specialization $y = 1$ and $Q_{is} = Q_i \ (\forall i,s)$. This completes our proof of Theorem \ref{Reconstruction}.

\section{Twisted theories} \label{twist}
\par In this section, we consider different twisted quantum K-theories of $X$ as applications of the non-isolated recursion techniques developed in Section \ref{NonisoRec}.

\subsection{Euler-type twistings and non-abelian quantum Lefschetz} \label{eulertwisting}

\par We will start with the Euler-type twisting defined already in Section \ref{Jfunc}. Let $E$ be a vector bundle over $X = \text{Flag}(v_1,\cdots, v_n; N)$. The $(\Eu,E)$-twisted big $\mathcal{J}$-function is the generating function of correlators defined with respect to the modified virtual structure sheaves
\[
\mathcal{O}^{\virt}_{0,m,d} \otimes \Eu(\ft_*\ev^* E),
\]
and the correspondingly modified Poincar\'e pairings on $K(X)$. We denote its image cone by $\mathcal{L}^{X,(\Eu,E)}$. Since $E$ may be written as a symmetric Laurent polynomial of the K-theoretic Chern roots $P_{is}$ of the tautological bundles $V_i$ of $X$, the same expression in $P_{is}$ gives rise to a well-defined bundle over $Y$, which we still call $E$. We denote by $\mathcal{J}^{tw,Y,(\Eu,E)}$ the aforementioned $(\Eu,y^{-1}\mathfrak{g}/\mathfrak{s})$-twisted big $\J$-function of $Y$ further twisted by $(\Eu,E)$, and its image cone by $\mathcal{L}^{tw,Y,(\Eu,E)}$.

\par We consider first the case of $E = \mu^{-1}V_j^\vee = \mu^{-1}\sum_{s=1}^{v_j} P_{js}^{-1}$ for $1\leq j\leq n$ as an example so that our notation is not too cumbersome. The general case is completely parallel. Here $\mu$ is once again the equivariant parameter of an auxiliary fiber-wise $\mathbb{C}^\times$-action, and we take it into consideration to ensure convergence. To be more precise, we enlarge the coefficient ring into $\widetilde{\Lambda}^Y = \Lambda^Y[[\mu]]$ and take $\widetilde{\Lambda}^Y_+ = \Lambda^Y_+ + (\mu)$. Since the Euler-type twisting can be written into exponential form
\[
\Eu(\ft_*\ev^* E) = \prod_{\text{Chern roots } L}(1-L^{-1}) = \exp{\sum_{\text{Chern roots } L\ } \sum_{k<0} \frac{L^k}{k}} = \exp{\sum_{k<0}\frac{\Psi^k(E_{0,m,d})}{k}},
\]
we may apply (the genus-zero version of) the quantum Adams-Riemann-Roch (qARR) theorem \cite{Givental:perm11} which gives
\[
\mathcal{L}^{tw,Y} = \Box^Y_j\cdot\mathcal{L}^{tw,Y,(\Eu,\mu^{-1}V_j^\vee)}, \quad\text{where } \Box^Y_j = \exp{\sum_{k<0}\frac{\Psi^k(\mu^{-1}E)}{k(1-q^k)}} = \exp{\sum_{k>0}\sum_{s=1}^{v_j}\frac{\mu^kP_{js}^kq^k}{k(1-q^k)}}.
\]
Following the idea of non-abelian localization, we may compose $\Box^Y_j$ with PFD operators to form 
\[
\bigtriangleup^Y_j = \exp{\sum_{k>0}\sum_{s=1}^{v_j}\frac{\mu^kP_{js}^kq^k(1-q^{kQ_{js}\partial_{Q_{js}}})}{k(1-q^k)}},
\]
which not only still maps $\mathcal{L}^{tw,Y,(\Eu,\mu^{-1}V_j^\vee)}$ to $\mathcal{L}^{tw,Y}$, but the dilaton shift $(1-q)$ to itself as well. $\bigtriangleup^Y_j$ is well-defined as the exponential of a ``convergent'' sequence here since the $\widetilde{\Lambda}^Y_+$-adic topology is assumed. Moreover, as we have seen earlier (e.g. Section \ref{StructureProofThm2}), this new operator can be regarded as the asymptotic expression of a ratio of infinite products, and thus has even nicer effect on J-functions.
\[
(\bigtriangleup^Y_j)^{-1}\cdot Q^d = \prod_{s=1}^{v_j}\prod_{l=1}^{d_{js}}(1-\mu P_{js}q^l) \cdot Q^d, \quad\quad\text{for } Q^d = \prod_{i=1}^n\prod_{s=1}^{v_i}Q_{is}^{d_{is}}.
\]
Applying $(\bigtriangleup^Y_j)^{-1}$ to $J^{tw,Y} = \sum_{d\in\mathcal{D}}Q^d J^{tw,Y}_d$, we obtain a point on $\mathcal{L}^{tw,Y,(\Eu,\mu^{-1}V_j^\vee)}$, $\widetilde{\Lambda}^Y_+$-close to $(1-q)$:
\[
J^{tw,Y,(\Eu,\mu^{-1}V_j^\vee)} = (1-q)\sum_{d\in \mathcal{D}}\ Q^d \cdot J^{tw,Y}_d \cdot \prod_{s=1}^{v_j}\prod_{l=1}^{d_{js}}(1-\mu P_{js}q^l).
\]
It can also be checked directly that it satisfies the properly twisted recursive relations on $Y$. 

\par We will show that this function descends to a point on $\mathcal{L}^{X,(\Eu,\mu^{-1}V_j^\vee)}$ as $\widetilde{T}\rightarrow T, y\rightarrow 1$ and $Q_{is}\rightarrow Q_i$. The proof comprises two aspects. First, this new function does not contain new poles. It looks like a coincidence in our case here that the extra factors all appear on the numerator instead of the denominator, but this is not the essential reason. In fact, the appearance of $\mu$ in the twisting terms indicates that even if they do appear in denominator (as happens in some other cases), we should regard them rather as their $q$-(or more precisely, $\mu$-)power series expansions. Second, at the existing poles of $J^{tw,Y,(\Eu,\mu^{-1}V_j^\vee)}$ we should show that under the specialization $\wT\rightarrow T, Q_{is}=Q_i$ and $y=1$, the residues satisfy the properly twisted recursive relations on $X$. Note that the \emph{isolated} double-twisted (i.e. by $(\Eu,y^{-1}\mathfrak{g}/\mathfrak{s})$ and then by $(\Eu,\mu^{-1}V_j^\vee)$) recursion coefficients between non-degenerate $\widetilde{T}$-fixed points on $Y$ descend correctly to the $(\Eu,\mu^{-1}V_j^\vee)$-twisted recursion coefficients between $T$-fixed points on $X$, which is rather direct from what we have proved in Section \ref{IsoRec}. In fact, compared to Section \ref{IsoRec}, the only modification necessary here is to take into consideration the changes coming from the $(\Eu,E)$-twisting in the two recursion coefficients, where $E = \mu^{-1}V_j^\vee$ for $X$ and $E = \mu^{-1}\sum_{s=1}^{v_j} P_{js}^{-1}$ for $Y$. Along the 1-dim orbit $AB$, both changes take the form 
\[
\frac{\Eu(E_{0,2,mD})|_\phi}{\Eu(E)|_A},
\]
where $\phi$ is as always the ramified $m$-sheet covering to $AB$. When $A$ and $B$ are both non-degenerate $T$-fixed points, the bundle $\mu^{-1}V_j^\vee$ of $X$ may genuinely be identified with $\mu^{-1}\sum_{s=1}^{v_j} P_{js}^{-1}$ on $Y$ over the entire $T$-orbit $AB$ (not only in the sense of Proposition \ref{descend}), so the two recursion coefficients accord when we take $\widetilde{T}\rightarrow T, Q_{is}\rightarrow Q_i$ and $y = 1$.

\par Yet once again, we are not finished until we prove that the contribution from the non-isolated orbits vanishes. The strategy in Section \ref{NonisoRec} carries almost entirely over, and the only missing part is to show that over any given balanced broken $\wT$-orbit, for example from $E_{\II}$ to $E_{\II\cup\JJ}$, the change on $S_{\II,\JJ}(\mathcal{G})$ due to the $(\Eu,\mu^{-1}V_j^\vee)$-twisting comes from the restriction (to $\wT$-fixed points) of a well-defined global expression on the moduli space of all balanced broken $T$-orbits from $E_{\II}$ with the same homological degree. Without loss of generality, we consider as example the (unique) broken orbit from $A = E_\phi$ to $E_{\JJ}$ on $Y$, where $\{n\}\subset \JJ\subset\{1,2,\cdots,n\}$, which is an element in $\M_A(\Lambda_{v_1+1}/\Lambda_1)$ (for notation see Section \ref{StepI}). Recall that the orbit takes the form
\[
A = E_{\phi} = E_{\JJ[0]}\rightarrow E_{\JJ[1]}\rightarrow \cdots\rightarrow E_{\JJ[K-1]}\rightarrow E_{\JJ[K]} = E_{\JJ},
\]
and thus the change imposed by the $(\Eu,\mu^{-1}V_j^\vee)$-twisting on its total recursion coefficient $S_{\JJ}$ is
\[
\mathcal{T}_{\JJ}(\mu^{-1}V_j^\vee) := \prod_{k=0}^{|\JJ|-1} \frac{\Eu((\mu^{-1}V_j^\vee)_{0,2,m\cdot D_{E_{\JJ[k]}E_{\JJ[k+1]}}})|_{\phi_{E_{\JJ[k]}E_{\JJ[k+1]}}}}{\Eu(\mu^{-1}V_j^\vee)|_{E_{\JJ[k]}}} = \prod_{s=1}^{v_j} \mathcal{T}_{\JJ}(\mu^{-1}P_{js}^{-1}).
\]
We will prove that $\mathcal{T}_{\JJ}(\mu^{-1}V_j^\vee)$ is equal to the restriction at $\JJ$ of certain common expression $\mathcal{T} = \mathcal{T}(P_1,\cdots,P_{n-1}) \in K_{\wT}(\M_A(\Lambda_{v_1+1}/\Lambda_1))$, well-defined under the specialization $\wT\rightarrow T, Q_{is} = Q_i$ and $y=1$. Here $\{P_i\}_{i=1}^{n-1}$ are the ($\wT$-equivariant) tautological bundles on $\M_A(\Lambda_{v_1+1}/\Lambda_1)$ that we defined in Section \ref{StepI}.

\par It suffices to prove it for each factor $\mathcal{T}_{\JJ}(\mu^{-1}P_{js}^{-1})$. In fact, the K-theoretic Chern roots $\mu^{-1}P_{js}^{-1}$ are not special here, and we may simply consider the general case of $\mathcal{T}_{\JJ}(\mu^{-1}L^{-1})$ for any $\wT$-equivariant line bundle $L$ over $Y$. For simplicity of notation, denote by $\alpha_i = -\langle c_1(L), \mathbf{1}_{i1} \rangle\ (1\leq i\leq n)$, and by $\nu = L|_A\in R(\wT)$. Through explicit computation, 
\begin{align*}
\mathcal{T}_{\JJ}(\mu^{-1}L^{-1}) & = \prod_{k=0}^{|\JJ|-1} \frac{\Eu((\mu^{-1}L^{-1})_{0,2,m\cdot D_{E_{\JJ[k]}E_{\JJ[k+1]}}})|_{\phi_{E_{\JJ[k]}E_{\JJ[k+1]}}}}{\Eu(\mu^{-1}L^{-1})|_{E_{\JJ[k]}}} \\
& = \prod_{k=0}^{|\JJ|-1} \ \prod_{l=1}^{-m \cdot\langle c_1(L), D_{E_{\JJ[k]}E_{\JJ[k+1]}} \rangle} \left(1 - \mu L|_{E_{\JJ[k]}}\cdot \lambda_{j_{k+1}}^{\frac{l}{m}}\right) \\
& = \prod_{i=1}^n \prod_{l=1}^{m\alpha_i} \left( 1 - \mu\nu\prod_{\iota=1}^{i-1}\lambda_{j_{k_{\JJ}[\iota]}}^{\alpha_{\iota}} \cdot \lambda_{j_{k_{\JJ}[i]}}^{\frac{l}{m}} \right),
\end{align*}
since the homological degree of the irreducible component $E_{\JJ[k]}E_{\JJ[k+1]}$ is $\sum_{a=j_k+1}^{j_{k+1}} \mathbf{1}_{a1}$ for $\JJ = \{j_1,\cdots,j_{|\JJ|}\}$. Here $j_{k_{\JJ}[i]}$ is defined by $j_{k_{\JJ}[i]-1}< i \leq j_{k_{\JJ}[i]}$ as in Section \ref{StepI}, and $\lambda_{j_{k_{\JJ}[i]}}^{1/m}$ is restriction to $\JJ$ of a global bundle by Corollary \ref{tautocor}. Therefore, $\mathcal{T}_{\JJ}(\mu^{-1}L^{-1})$ is indeed restriction to $\JJ$ of a global bundle $\mathcal{T}(\mu^{-1}L^{-1})$ on $\M_A(\Lambda_{v_1+1}/\Lambda_1)$ as well. Due to the existence of $\mu$, the factors of $\mathcal{T}(\mu^{-1}L^{-1})$ are always invertible, even as we take $\Lambda_{is}=1 \ (i\leq n)$ and $\Lambda_{n+1,s} = \Lambda_s$, which means $\mathcal{T}(\mu^{-1}L^{-1})$ is well-defined under the specialization $\wT\rightarrow T, Q_{is}=Q_i$ and $y=1$.

\par This completes the proof of our claim that $J^{tw,Y,(\Eu,\mu^{-1}V_j^\vee)}$ descends to a point on $\mathcal{L}^{X,(\Eu,\mu^{-1}V_j^\vee)}$. The point can be explicitly written as
\[
J^{X,(\Eu,\mu^{-1}V_j^\vee)} = (1-q)\sum_{d\in \mathcal{D}} \ \prod_{i=1}^n Q_i^{\sum_{s=1}^{v_i} d_{is}} \cdot J^X_d \cdot \prod_{s=1}^{v_j}\prod_{l=1}^{d_{js}}(1-\mu P_{js}q^l).
\]
Here $J^X_d$ is the coefficient of $\prod_i Q_i^{\sum_s d_{is}}$ in the summand of $J^X$ (see Corollary \ref{MainCor}) associated to $d\in\mathcal{D}$. Alternatively, a less ambiguous way of defining $J^X_d$ is as the limit of $J^{tw,Y}_d$ under the specialization $\wT\rightarrow T, Q_{is}=Q_i$ and $y=1$, for any $d\in\mathcal{D}$.

\par It is not hard to see that the argument above works for any given $E$ in $K(X)$. Besides, the starting point $J^{tw,Y}$ is non-essential either and may be replaced by any point $\L^{tw,Y}$ invariant under the action of the Weyl group $W$. Expressing $E$ in terms of $P = (P_{is})_{i=1,s=1}^{n \quad v_i}$ and the $T$-equivariant parameters $\lambda = (\Lambda_s)_{s=1}^N$, we may decompose it formally into a ($W$-invariant) linear combination of monomials $L_k^{-1}$
\[
E = \mu^{-1} \sum_{k=1}^r \pm L_k^{-1}(P,\lambda).
\]
We formulate the general result as the non-abelian quantum Lefschetz theorem below. 
\begin{theorem}[Non-Abelian Quantum Lefschetz Theorem]
Suppose
\[
I = \sum_{d\in\mathcal{D}} Q^d \cdot I_d(P,\Lambda,y;q) 
\]
is a $W$-invariant point on the $\wT$-equivariant image cone $\L^{tw,Y}$, where $P = (P_{is})_{i=1,s=1}^{n \quad v_i}$ and $\Lambda = (\Lambda_{is})_{i=2,s=1}^{n+1 \ v_i}$. Then,
\[
I^{(\Eu,E)} = \sum_{d\in\mathcal{D}} \ \prod_{i=1}^n Q_{i}^{\sum_{s=1}^{v_i}d_{is}} \cdot I_d(P,\lambda,1;q) \cdot \prod_{k=1}^r \prod_{l=1}^{ - \langle c_1(L_k),d \rangle} (1- \mu L_k(P,\lambda)\cdot q^l)^{\pm 1}
\]
locates on the $T$-equivariant image cone $\L^{X,(\Eu,E)}$ under the specialization $\Lambda_{is}=1\ (i\leq n)$, $\Lambda_{n+1,s} = \Lambda_s$, $Q_{is} = Q_i$ and $y=1$.
\end{theorem}
$I^{(\Eu,E)}$ is invariant under permutation of $s$ in $P_{is}$, and is thus a well-defined $q$-rational function with coefficients in $K_T(X)[[Q_1,\cdots,Q_n,\mu]]$.
\begin{remark}
One may derive an explicit reconstruction theorem (like Theorem \ref{Reconstruction}) for $\L^{X,(\Eu,E)}$ starting from any $W$-invariant point on $\L^{tw,Y,(\Eu,E)}$ as well. The same surjectivity argument as in Section \ref{recon} may be applied without change.
\end{remark}

\subsection{More examples}
\par More generally, the same argument works for any exponential-type twisting, which takes the form (see \cite{Givental:perm11})
\[
\mathcal{O}^{\text{virt}}_{0,m,d}\longmapsto \mathcal{O}^{\text{virt}}_{0,m,d}\otimes \exp{\sum_{k\neq 0}\frac{\Psi^{k}(E^{(k)}_{0,m,d})}{k}},
\]
where $E^{(k)}$ are vector bundles on $X$ and $(\cdot)_{g,m,d}$ stands for $\ft_*\ev^*(\cdot)$ (see Section \ref{Jfunc}). One special case is
\[
\mathcal{O}^{\text{virt}}_{0,m,d}\longmapsto \mathcal{O}^{\text{virt}}_{0,m,d}\otimes \Eu^{-1}((\hbar T^*X)_{0,m,d}^\vee)
\]
where $\hbar$ is the equivariant parameter of a fiber-wise $\mathbb{C}^\times$-action on $T^*X$. Such twisting is of exponential-type because we have the expansion
\[
\Eu^{-1}((\hbar T^*X)_{0,m,d}^\vee) = \exp{\sum_{k>0} \frac{\Psi^k((\hbar T^*X)_{0,m,d})}{k}},
\]
convergent in $\hbar$-adic topology. We denote by $\mathcal{L}^\mathcal{E}$ the image cone of the big $\J$-function so twisted. By an argument similar to the one above, one may find that the point $J^\mathcal{E} = (1-q)\sum_d Q^d J^\mathcal{E}_d$ resides in $\mathcal{L}^\mathcal{E}$ where
\[
J^\mathcal{E}_d = J^X_d \cdot \frac{\prod_{i=1}^n\prod_{1\leq r\leq v_{i+1}}^{1\leq s\leq v_i}\prod_{l=0}^{d_{is}-d_{i+1,r}-1}(1-\hbar\frac{P_{is}}{P_{i+1,r}}q^l)}{\prod_{i=1}^n\prod_{r\neq s}^{1\leq r,s\leq v_i}\prod_{l=0}^{d_{is}-d_{ir}-1}(1-\hbar\frac{P_{is}}{P_{ir}}q^l)}.
\] 
Generating functions twisted in this way are important as they are very often ``balanced'', an essential property for applying rigidity arguments \cite{Okounkov}. More precisely, terms in the numerator and denominator come in pairs in the form of
\[
\frac{1-\hbar\cdot x\cdot q^l}{1 - q\cdot x\cdot q^l},
\]
and thus the limit exists as $x^{\pm 1}\rightarrow\infty$. Liu \cite{Liu} has a deeper discussion on when the property holds. 

\par The expression matches the \textbf{vertex functions} appearing in \cite{PSZ} for the special case of grassmannians and in \cite{KPSZ} for the case of (type-A) flag varieties, up to a power of $q$ and $\hbar$ and a sign which can all absorbed by variable change of the Novikov variables. Note that the settings of the above two papers are dual to those of this present paper in the definition of grassmannians and flag varieties.

\par Following the argument after Theorem 4 of \cite{Givental-Yan} and dividing $J^\mathcal{E}$ by $\Eu(\hbar^{-1}TX)$, we will obtain a point on the cone $\mathcal{L}^{X,(\Eu,\hbar^{-1}TX)}$ with Euler-type twisting. As $\hbar\rightarrow 1$, it reduces to the image cone of the (untwisted) big $\J$-function of $Z\subset X$, where $Z$ is the zero locus in $X$ of a generic section of $TX$, i.e. the disjoint union of $\chi(X)$ isolated points. 

\par A variant of the quantum K-theory considered in this paper is the quantum Hirzebruch K-theory. Correlators of the latter are defined with the virtual structure sheaves of the moduli spaces modified by certain ($y$-equivariant) Euler classes of the virtual tangent bundles. Here $y$ comes from the Hirzebruch $\chi_{-y}$-genus and means differently from the auxiliary equivariant parameter that we have used in previous sections. By \cite{Irit}, dividing the above point on $\mathcal{L}^{X,(\Eu,TX)}$ further by $1-yq$, we obtain a value of the big $\mathcal{J}$-function of the quantum Hirzebruch K-theory.

\subsection{Level structures}
\par Given a vector bundle $E$ on $X$, the \textbf{level-$l$ structure of $E$} gives the following modification to the virtual structure sheaves
\[
\mathcal{O}^{\text{virt}}_{g,m,d}\longmapsto \mathcal{O}^{\text{virt}}_{g,m,d}\otimes \det^{-l}(E_{g,m,d}).
\]
We denote by $\J^{X,(E,l)}$ the associated big $\J$-function, and by $\L^{X,(E,l)}$ its image cone. 

\par As usual, writing $E$ in terms of the K-theoretic Chern roots $P_{is}$, we obtain a well-defined element in $K(Y)$, which we also call $E$ through abuse of notation. We denote by $\J^{tw,Y,(E,l)}$ the big $\J$-function of $Y$ with the usual Euler-type twisting $(\Eu,y^{-1}\mathfrak{g}/\mathfrak{s})$ and then the level structure $(E,l)$, and by $\L^{tw,Y,(E,l)}$ its image cone.

\par In this section, we illustrate only through the example $E = V_j$ for some specific $j$ to avoid over-complicated notation. We locate one point on $\L^{X,(V_j,l)}$ and thus prove Theorem \ref{MainTheoremLevel}. As may be seen from the proof, other cases are no different. Moreover, an argument almost identical to that in Section \ref{recon} will provide an explicit reconstruction of the entire $\L^{X,(V_j,l)}$. We omit the details here.

\par The recursive formulae on $Y$ associated to $\L^{tw,Y,(V_j,l)}$ takes the explicit form
\[
\Res_{q = \widetilde{\lambda}^{1/m}}\f_a(q)\frac{dq}{q} = \coeff^{tw,Y}_{ab}(m)\cdot \frac{\det^{-l}((V_j)_{0,2,mD})|_\phi}{\det^{-l}(V_j)|_a} \cdot \f_b(\widetilde{\lambda}^{1/m})
\]
along any 1-dim $\wT$-orbit $ab$ with tangent $\wT$-character $\widetilde{\lambda}$ at $a$ and homological degree $D$. It is not hard to show that the following $q$-rational function satisfies such recursive formulae
\[
J^{tw,Y,(V_j,l)} = \sum_{d\in\mathcal{D}} \prod_{i,s} Q_{is}^{d_{is}} \cdot J^{tw,Y}_d \cdot \left[\prod_{s=1}^{v_j} P_{js}^{d_{js}}q^{\frac{d_{js}(d_{js}-1)}{2}}\right]^l.
\]
It may be checked through direct computation. The intuition of such determinantal modification to the $J$-function comes from the existing formulae in \cite{R-Z} and in \cite{Wen}.

\par We expect that setting $\wT\rightarrow T$, $Q_{is} = Q_i$ and $y=1$ in $J^{tw,Y,(V_j,l)}$ will give us a point on $\L^{X,(V_j,l)}$, and this is indeed true. The argument for Euler-type twistings in Section \ref{eulertwisting} may be transplanted here for level structures without too much pain. Once again, the only difficulty is to show that the contribution from the non-isolated recursion coefficients, i.e. the contribution from balanced broken orbits, eventually vanishes. Once again, it amounts to prove that in the twisted recursion coefficient associated to broken orbit $\JJ$, the extra terms due to the level structures
\[
\mathcal{R}_{\JJ} := \prod_{k=0}^{|\JJ|-1} \frac{\det^{-l}((V_j)_{0,2,mD_{E_{\JJ[k]}E_{\JJ[k+1]}}})|_{\phi_{E_{\JJ[k]}E_{\JJ[k+1]}}}}{\det^{-l}(V_j)|_{E_{\JJ[k]}}},
\]
come from the restriction to $\JJ$ of a global expression on the moduli $\M_A(\Lambda_{v_1+1}/\Lambda_1)$ of all balanced broken orbits with the same homological degree as $\JJ$. One may be tempted to check it directly by hand as before, but there is a faster way. In fact, it suffices to observe that for any vector space $W$,
\[
\det^{-l} W = \left(\frac{\Eu(\mu W)}{\Eu(\mu^{-1} W^\vee)}\right)^l \cdot (-1)^{l\dim W}\quad\quad\quad \text{ at } \mu=1.
\]
By applying the above formula to $W = V_j|_{E_{\JJ[k]}}$ and $W = (V_j)_{0,2,mD_{E_{\JJ[k]}E_{\JJ[k+1]}}}|_{\phi_{E_{\JJ[k]}E_{\JJ[k+1]}}}$, we may express $\mathcal{R}_{\JJ}$, up to signs, as the ratio of (the $l$-th powers of) Euler-type twistings, which have been proven to come from global expressions on $\M_A(\Lambda_{v_1+1}/\Lambda_1)$. The remaining extra signs, which on orbit $ab$ with homological degree $D = (D_{is})$ amount to
\[
(-1)^{l\cdot \rank (V_j)_{0,2,mD} - l\cdot \rank V_j} = (-1)^{l\cdot \langle c_1(V_j), D\rangle} =  (-1)^{l\cdot \sum_{s=1}^{v_j}D_{js}},
\]
depend linearly on $D$ and come thus from a (constant) global expression on $\M_A(\Lambda_{v_1+1}/\Lambda_1)$ as well. In conclusion, 
\[
J^{X,(V_j,l)} = (1-q)\sum_{d\in \mathcal{D}} \ \prod_{i=1}^n Q_i^{\sum_s d_{is}} \cdot J^X_d \cdot \left[\prod_{s=1}^{v_j} P_{js}^{d_{js}}q^{\frac{d_{js}(d_{js}-1)}{2}}\right]^l,
\]
which is the outcome of setting $\wT\rightarrow T$, $Q_{is} = Q_i$ and $y=1$ in $J^{tw,Y,(V_j,l)}$, gives a point on $\L^{X,(V_j,l)}$.
\begin{remark}
The observation above seems to provide a way of systematically expressing level structures in terms of the more classical Euler-type twistings up to signs. However, the concurrent appearance of both $\mu$ and $\mu^{-1}$ implies that the technical issue of convergence may arise in certain cases. In fact, the Euler-type twisting $(\Eu,\mu^{-1}E)$ is usually well-defined only when we add $\mu$ into the base coefficient ring $\Lambda^X$ and consider the adic topology associated to $\mu$. Therefore, one has to treat the limit at $\mu = 1$ with extra care. Our argument above is legitimate as we do only algebraic manipulation of the recursion coefficients, which are always well-defined even when $\mu = 1$.
\end{remark}

\par $J^{X,(V_j,l)}$ above is actually the small $J$-function (i.e. with zero input) of the corresponding theory with level structures when $l \leq \left[\prod_{i=j}^n(v_{i+1}-v_i)\right]/(n-j+1)$, which may be proved by estimating the degrees of $q$ in the numerators and denominators for each $Q^d$. The proof is purely technical and highly resemblant to that in the grassmannian case \cite{Givental-Yan}, so we omit the details here.

\par A correspondence between big $\mathcal{J}$-functions with level structures of dual flag varieties still holds, generalizing the one for dual grassmannians discovered in \cite{Dong-Wen}. To be more precise, there is a $T$-equivariant isomorphism between the flag variety
\[
X = \text{Flag}(v_1,v_2,\cdots,v_n;N)
\]
and
\[
X' = \text{Flag}(N-v_n,N-v_{n-1},\cdots,N-v_1;N),
\]
where $T$ acts on the ambient $\mathbb{C}^N$ of the former with characters $\Lambda_1,\cdots,\Lambda_N$ but on that of the latter with characters all inverted, and the isomorphism is given explicitly by
\[
0\subset V_1\subset V_2\subset\ldots\subset V_n\subset\mathbb{C}^N \longmapsto 0\subset (V_n)^{\perp}\subset (V_{n-1})^{\perp}\subset\ldots\subset (V_1)^\perp \subset (\mathbb{C}^N)^*.
\]
Both have $n$ tautological bundles, and we name them $V_i$ and $V_i'$ respectively. Note that $\rank V_i = v_i$ and $\rank V_i' = N-v_{n-i+1}$. Pulling back $V_i'$ to $X$ through the isomorphism described above, it becomes the dual quotient bundle $(\mathbb{C}^N/V_i)^\vee$. Under such identification, we formulate the so-called \textbf{level correspondence} between dual flag varieties as below.
\begin{theorem}[Level Correspondence] \label{levelcorrespondence}
\[
\mathcal{L}^{X,(V_i,l)} = \mathcal{L}^{X',((V_i')^\vee,-l)}, \quad \forall 1\leq i\leq n.
\]
\end{theorem}
This is because
\[
\det(\ft_*\ev^*V_i)\ \otimes\ \det(\ft_*\ev^*(V_i')^\vee) = \det(\ft_*\ev^* (V_i\ \oplus\ \mathbb{C}^N/V_i)) = \Lambda_1\cdots\Lambda_N,
\]
and the constant scaling here is compensated by the discrepancy of Poincar\'e pairings. As a corollary, when $|l|$ is small (so that the explicit description of small $J$-function given earlier holds for both sides), the two small $J$-functions coincide.

\section{K-theoretic Mirrors} \label{chap5}

\par The goal of this section is to represent (components of) the small $J$-function $J^X$ as Jackson-type integrals (i.e. $q$-integrals) over lattices on torus. Throughout the section we assume $|q|>1$. We start by reviewing the properties of some model $q$-integrals which will be used in an essential way when we build up the mirrors. 

\subsection{Two model integrals}

\par Consider the $q$-function
\[
f(x) = x^{\frac{\ln \lambda}{\ln q}} \prod_{m=1}^{\infty}(1-x/q^m)
\]
where $\lambda$ is regarded as a parameter for now. Since we have the asymptotic expression near the unit circle
\[
\prod_{m=1}^{\infty}(1-x/q^m) \sim e^{\sum_{k>0}x^k/k(1-q^k)},
\] 
in many cases we will abuse the notations and denote $f(x)$ also by
\[
f(x) = x^{\frac{\ln \lambda}{\ln q}} e^{\sum_{k>0}x^k/k(1-q^k)}
\]
to maintain visual resemblance with the complex oscillating integrals which appear in quantum cohomology. It is not hard to check that 
\[
(1-q^{x\partial_x}/\lambda) f(x) = x \cdot f(x) = q^{\lambda\partial_{\lambda}} f(x).
\]
Consider the following improper $q$-integral for $f(x)$
\[
F(\lambda) = \int_0^{\infty}f(x) x^{-1}d_qx := \sum_{d\in\mathbb{Z}} f(q^{-d}).
\]
The integral is actually computed on the ``lattice'' $\{q^{-d}\}_{d\in\mathbb{Z}}$. Since the lattice is invariant under the multiplicative $q$-translation $q^{x\partial_x}$, we have
\[
(1-\lambda^{-1}) F(\lambda) = q^{\lambda\partial_{\lambda}} F(\lambda).
\]
Therefore, when $|\lambda|>1$, we have
\[
F(\lambda) = c\cdot \sum_{d\geq 0}\frac{\lambda^{-d}}{(1-q^{-1})(1-q^{-2})\cdots (1-q^{-d})} = \frac{c}{\prod_{m=0}^{\infty}(1-q^{-m}/\lambda)}
\]
where $c$ is the limit of $F$ as $\lambda$ approaches infinity. As the rightmost expression is an analytic function for $\lambda\neq q^{-m}\ (m\geq 0)$, we may extend the domain of definition of $F(\lambda)$ to $\mathbb{C}-\{q^{-m}|m\geq 0, m\in\mathbb{Z}\}$ by analytic continuation. 
\begin{remark}
In order to compute $c$, note that the integrand vanishes at $x = q^{-d}$ for $d>0$. Therefore,
\begin{align*}
F(\lambda) & = \sum_{d\geq 0}\lambda^{-d}\prod_{m=d+1}^{\infty}(1-q^{-m}) \\
& = \prod_{m=1}^{\infty}(1-q^{-m}) \cdot \sum_{d\geq 0}\frac{\lambda^{-d}}{(1-q^{-1})(1-q^{-2})\cdots (1-q^{-d})}.
\end{align*}
In other words, $c = \prod_{m=1}^{\infty}(1-q^{-m})$.
\end{remark}

\par Similarly, consider the $q$-function
\[
g(y)=\frac{y^{1 + \frac{\ln \lambda}{\ln q}}}{\prod_{m=0}^{\infty}(1-y/q^m)}.
\]
It has the asymptotic expression 
\[
g(y) \sim y^{1 + \frac{\ln \lambda}{\ln q}} \cdot e^{-\sum_{k>0}q^ky^k/k(1-q^k)},
\]
and satisfies the $q$-difference equations
\[
(1-\lambda q\cdot q^{-y\partial_y}) g(y) = y \cdot g(y) = q^{\lambda\partial_{\lambda}}g(y).
\]
Because of the factors $(1-y/q^m)$ in the denominator, the $q$-integral above is not defined for $g(y)$. Hence we consider instead the shifted improper $q$-integral
\[
G(\lambda) = \int_{0}^{-\infty} g(y) y^{-1} d_qy := \sum_{d\in\mathbb{Z}} g(-q^{-d}).
\]
In other words, the integral is taken actually over the lattice $\{-q^{-d}\}_{d\in\mathbb{Z}}$. Similar integrals may be defined over shifted lattices of the form $\{q^{-d}/A\}_{d\in\mathbb{Z}}$  for any $A\neq 1$, and the results share similar properties. Here the choice $A = -1$ is not special and is purely for notational simplicity. 

\par From the $q$-difference equations of $g$, we have
\[
(1-\lambda q) G(\lambda) = q^{\lambda\partial_{\lambda}} G(\lambda).
\]
When $|\lambda|<1$,
\[
G(\lambda) = c \cdot \prod_{m=-\infty}^0 (1-\lambda q^m) = c\cdot \sum_{d\geq 0} \frac{\lambda^d q^d}{(1-q)\cdots(1-q^d)}.
\]
As $|q|>1$, the rightmost expression is actually an entire function of $\lambda$, so the domain of definition of $G(\lambda)$ may be extended to the entire $\mathbb{C}$ by analytic continuation.

\par The two integrals above are closely related to the $q$-gamma functions. See \cite{DK} for a modern treatment of it.

\subsection{Construction}

\par Now we start to construct the mirrors. Recall that the small $J$-function $J^X$ of the flag variety $X = \text{Flag}(v_1,\cdots,v_n;N)$ is obtained from a point $J^{tw,Y}$ on the twisted big $\mathcal{J}$-function of the abelian quotient $Y$, which itself is a toric variety, by taking certain limits of the Novikov variables and equivariant parameters. Hence we first build up the mirror for the twisted theory of $Y$. Recall that
\[
J^{tw,Y} = (1-q)\sum_{d\in \mathcal{D}} \prod_{i,s} Q_{is}^{d_{is}}\frac{\prod_{i=1}^n\prod_{r\neq s}^{1\leq r,s\leq v_i}\prod_{l=1}^{d_{is}-d_{ir}}(1-y\frac{P_{is}}{P_{ir}}q^l)}{\prod_{i=1}^n\prod_{1\leq r\leq v_{i+1}}^{1\leq s\leq v_i}\prod_{l=1}^{d_{is}-d_{i+1,r}}(1-\frac{P_{is}}{P_{i+1,r}\Lambda_{i+1,r}}q^l)},
\]
where $\Lambda_{i+1,r}$ are equivariant parameters for the action on $Y$ by the enlarged torus $\widetilde{T}$.

\par It should be made clear that when we take limit of the Novikov variables and the equivariant parameters, the resulting expressions may no longer have the nice properties that we need for $\J$-functions. For example, even in the case of grassmannians, when we take $y=1$ but not $Q_{is}=Q_i$, the resulting expression will have poles at unexpected positions like $q = (\frac{\Lambda_1}{\Lambda_2})^{1/m}$, or non-simple poles at legitimate positions like $q = (\frac{\Lambda_2}{\Lambda_1})^{1/m}$, which disqualifies it from residing on the image of any big $\mathcal{J}$-function (twisted or untwisted) of any target variety. These poles will only cancel out when we take the limit of Novikov variables as well. Yet, these expressions are always well-defined as $q$-functions, and still satisfy their own finite-difference equations. In fact, these equations come exactly from taking the same limit of coefficients in the finite-difference equations satisfied by $J^{tw,Y}$. Therefore, we may dive freely into the algebraic constructions in this section, without worrying about the same issues as in previous sections when, for instance, we compute the recursion coefficients which should receive special consideration only because they are not well-defined at the limit we take.

\par Now, through direct computation, we write out the $q$-difference equations satisfied by $J^{tw,Y}$. Fix any $1\leq i\leq n$ and $1\leq s\leq v_i$, we have 
\begin{align*}
& \prod_{1\leq r\leq v_{i+1}}(1-\frac{P_{is}q^{Q_{is}\partial_{Q_{is}}}}{P_{i+1,r}q^{Q_{i+1,r}\partial_{Q_{i+1,r}}}}\cdot\frac{1}{\Lambda_{i+1,r}}) \prod_{1\leq r\leq v_i}^{r\neq s}(1-y\cdot q\cdot\frac{P_{ir}q^{Q_{ir}\partial_{Q_{ir}}}}{P_{is}q^{Q_{is}\partial_{Q_{is}}}}) J^{tw,Y} \\
= & Q_{is} \prod_{1\leq r\leq v_{i-1}}(1-\frac{P_{i-1,r}q^{Q_{i-1,r}\partial_{Q_{i-1,r}}}}{P_{is}q^{Q_{is}\partial_{Q_{is}}}}\cdot\frac{1}{\Lambda_{is}}) \prod_{1\leq r\leq v_i}^{r\neq s}(1-y\cdot q\cdot\frac{P_{ir}q^{Q_{is}\partial_{Q_{is}}}}{P_{is}q^{Q_{ir}\partial_{Q_{ir}}}}) J^{tw,Y}.
\end{align*}
\par As K-theoretic mirrors, we will construct $q$-integrals over certain multiplicative $q$-lattices on a family of tori, which satisfy the same $q$-difference equations as above. The family of tori is parameterized by the Novikov variables $Q = (Q_{is})_{i=1,s=1}^{n\quad v_i}$, and takes the form
\[
\mathcal{X}_{Q} = \left\{(X,Y)\in\mathcal{X}|\frac{\prod_{r=1}^{v_{i+1}}X_{isr}}{\prod_{r=1}^{v_{i-1}}X_{i-1,rs}} = Q_{is}\frac{\prod_{1\leq s'\leq v_i}^{s'\neq s}Y_{iss'}}{\prod_{1\leq s'\leq v_i}^{s'\neq s}Y_{is's}},\ \ \forall i,s\right\},
\]
where $\mathcal{X}$ is the ambient complex torus with multiplicative coordinates $X_{isr} (1\leq i\leq n, 1\leq s\leq v_i, 1\leq r\leq v_{i+1})$ and $Y_{iss'} (1\leq i\leq n, 1\leq s,s'\leq v_i,s\neq s')$ of dimension
\[
\dim \mathcal{X} =  \sum_{i} v_i(v_{i+1}+v_i-1).
\]
Note that we define $v_0 = 0$ for convenience of notations. Fixing the Novikov variables, each torus in the family has dimension
\[
\dim \mathcal{X}_{Q} = \sum_{i} v_i(v_{i+1}+v_i-2).
\]
For simplicity of notations, we name the two index sets for the variables $X_{isr}$ and the variables $Y_{iss'}$ as
\begin{align*}
\mathfrak{I}_X & := \{(i,s,r)| 1\leq i\leq n, 1\leq s\leq v_i, 1\leq r\leq v_{i+1} \},\\
\mathfrak{I}_Y & := \{(i,s,s')| 1\leq i\leq n, 1\leq s,s'\leq v_i, s\neq s'\}.\\
\end{align*}
Over the family of tori $\mathcal{X}_{Q}$, we consider the $q$-integral
\begin{align*}
\mathcal{I}^{\widetilde{T}} = \int_{\Gamma\subset\mathcal{X}_{Q}} & \prod_{i,s,r} X_{isr}^{\ln\Lambda_{i+1,r}/\ln q} \cdot \prod_{i,s,s'} Y_{iss'}^{1+\ln y/\ln q} \cdot e^{\sum_{k>0}\frac{\sum_{i,s,r} X_{isr}^k - q^k\sum_{i,s,s'} Y_{iss'}^k}{k(1-q^k)}}\\
& \cdot \frac{(\bigwedge_{i,s,r}d_q\ln X_{isr})\wedge(\bigwedge_{i,s,s'}d_q\ln Y_{iss'})}{\bigwedge_{1\leq i\leq n, 1\leq s\leq v_i} (\sum_{r=1}^{v_{i+1}}d_q\ln X_{isr} - \sum_{r=1}^{v_{i-1}}d_q\ln X_{i-1,rs} - \sum_{1\leq s'\leq v_i}^{s'\neq s}d_q\ln\frac{Y_{iss'}}{Y_{is's}})},
\end{align*}
where the products and sums with omitted index sets are taken either over $(i,s,r)\in\mathfrak{I}_X$ or $(i,s,s')\in\mathfrak{I}_Y$. Here the cycle $\Gamma$ of integration is a $q$-lattice in $\mathcal{X}_{Q}$ of rank $\sum_{i} v_i(v_{i+1}+v_i-2)$, or more precisely a linear combination of such cycles. The choices of such cycles in order to recover the components of $J^{tw,Y}$ will be specified as we proceed through the computations below. The ``$q$-volume form'' is invariant under under multiplicative $q$-translation. In other words, when any of $X_{isr}$ or $Y_{iss'}$ is multiplied by a power of $q$, the volume form remains the same. Besides, one note is that in spite of the complicated expression of the denominator, one may think simply of $\bigwedge_{i,s} d_q\ln Q_{is}$.

\par $\mathcal{I}^{\widetilde{T}}$ satisfies a similar set of $q$-difference equations as $J^{tw,Y}$ does, which we state as the following theorem.
\begin{theorem}
Fix any $1\leq i\leq n$ and $1\leq s\leq v_i$,
\begin{align*}
& \prod_{1\leq r\leq v_{i+1}}(1-\frac{q^{Q_{is}\partial_{Q_{is}}}}{q^{Q_{i+1,r}\partial_{Q_{i+1,r}}}}\cdot\frac{1}{\Lambda_{i+1,r}}) \prod_{1\leq s'\leq v_i}^{s'\neq s}(1-y\cdot q\cdot\frac{q^{Q_{is'}\partial_{Q_{is'}}}}{q^{Q_{is}\partial_{Q_{is}}}}) \ \mathcal{I}^{\widetilde{T}} \\
= & Q_{is} \prod_{1\leq r\leq v_{i-1}}(1-\frac{q^{Q_{i-1,r}\partial_{Q_{i-1,r}}}}{q^{Q_{is}\partial_{Q_{is}}}}\cdot\frac{1}{\Lambda_{is}}) \prod_{1\leq s'\leq v_i}^{s'\neq s}(1-y\cdot q\cdot\frac{q^{Q_{is}\partial_{Q_{is}}}}{q^{Q_{is'}\partial_{Q_{is'}}}}) \ \mathcal{I}^{\widetilde{T}}.
\end{align*}
\end{theorem}
In fact, the $q$-difference equations in this theorem may be obtained by removing the tautological bundles which appear in the $q$-difference equations satisfied by $J^{tw,Y}$ and replacing $s'$ for some of the indices $r$.

\par In order to prove the theorem, we start by analyzing the effect of $q^{X_{isr}\partial_{X_{isr}}}$ on the integrand of $\mathcal{I}^{\widetilde{T}}$. The only factors related to $X_{isr}$ are
\[
X_{isr}^{\ln\Lambda_{i+1,r}/\ln q} \cdot e^{\sum_{k>0}\frac{X_{isr}^k}{k(1-q^k)}},
\]
for which we have 
\begin{align*}
& q^{X_{isr}\partial_{X_{isr}}} \cdot (X_{isr}^{\ln\Lambda_{i+1,r}/\ln q} \cdot e^{\sum_{k>0}\frac{X_{isr}^k}{k(1-q^k)}})\\
= & (qX_{isr})^{\ln\Lambda_{i+1,r}/\ln q} \cdot e^{\sum_{k>0}\frac{q^kX_{isr}^k}{k(1-q^k)}}\\
= & q^{\ln\Lambda_{i+1,r}/\ln q} \cdot X_{isr}^{\ln\Lambda_{i+1,r}/\ln q} \cdot e^{-\sum_{k>0}\frac{x^k}{k} + \frac{X_{isr}^k}{k(1-q^k)}}\\
= & \Lambda_{i+1,r}(1-X_{isr}) \cdot (X_{isr}^{\ln\Lambda_{i+1,r}/\ln q} \cdot e^{\sum_{k>0}\frac{X_{isr}^k}{k(1-q^k)}}).
\end{align*}
On the other hand, for any integrand $h = h(X,Y)$ where $X=(X_{isr})$ and $Y=(Y_{iss'})$, due to the expression of $\mathcal{X}_{Q}$, the effect of $q^{X_{isr}\partial_{X_{isr}}}$ may be interpreted alternatively in the following way
\begin{align*}
& \int_{\Gamma\subset\mathcal{X}_{Q}} q^{X_{isr}\partial_{X_{isr}}} \cdot h(X,Y) \ \frac{d_q\ln X\wedge d_q\ln Y}{d_q\ln Q}\\
= & \int_{\Gamma\subset\mathcal{X}_{Q}} h(\cdots,q X_{isr},\cdots) \ \frac{d_q\ln X\wedge d_q\ln Y}{d_q\ln Q}\\
= & \int_{\Gamma'\subset\mathcal{X}_{\cdots qQ_{is}\cdots q^{-1}Q_{i+1,r}\cdots}} h(\cdots, X_{isr}',\cdots) \ \frac{d_q\ln X'\wedge d_q\ln Y}{d_q\ln Q'} \quad (\text{ take }X_{isr}' = qX_{isr})\\
= & q^{Q_{is}\partial_{Q_{is}} - Q_{i+1,r}\partial_{Q_{i+1,r}}} \int_{\Gamma\subset\mathcal{X}_{Q}} h(X',Y) \ \frac{d_q\ln X'\wedge d_q\ln Y}{d_q\ln Q'}\\
\end{align*}
Here $d_q\ln X\wedge d_q\ln Y/d_q\ln Q$ stands for the $q$-translation-invariant $q$-volume form in our integral and $\Gamma'$ for the image of $\Gamma$ under the change of variable. Note that the Novikov variables in the denominator of the $q$-volume form are abused notation. They are the corresponding expressions purely in terms of $X$ and $Y$, and thus are not affected by operators like $q^{Q_{is}\partial_{Q_{is}}- Q_{i+1,r}\partial_{Q_{i+1,r}}}$. Combining the two results above, we have derived that the operator
\[
(1-\frac{q^{Q_{is}\partial_{Q_{is}}}}{q^{Q_{i+1,r}\partial_{Q_{i+1,r}}}}\cdot\frac{1}{\Lambda_{i+1,r}})
\]
applied to $\mathcal{I}^{\widetilde{T}}$ has the effect of multiplying the integrand of $\mathcal{I}^{\widetilde{T}}$ by the factor $X_{isr}$. Similarly, the terms in the integrand of $\mathcal{I}^{\widetilde{T}}$ related to $Y_{iss'}$ are 
\[
Y_{iss'}^{1+\ln y/\ln q} \cdot e^{-\sum_{k>0}\frac{q^k Y_{iss'}^k}{k(1-q^k)}},
\]
for which we have
\begin{align*}
& q^{-Y_{iss'}\partial_{Y_{iss'}}} \cdot (Y_{iss'}^{1+\ln y/\ln q} \cdot e^{-\sum_{k>0}\frac{q^k Y_{iss'}^k}{k(1-q^k)}})\\
= & y^{-1}q^{-1} (1-Y_{iss'}) \cdot (Y_{iss'}^{1+\ln y/\ln q} \cdot e^{-\sum_{k>0}\frac{q^k Y_{iss'}^k}{k(1-q^k)}}).
\end{align*}
As a result, we may show that the operator 
\[
(1-y\cdot q\cdot\frac{q^{Q_{is}\partial_{Q_{is}}}}{q^{Q_{is'}\partial_{Q_{is'}}}})
\]
applied to $\mathcal{I}^{\widetilde{T}}$ gives rise to the factor $Y_{iss'}$. 

\par Now, we are ready to look back at the $q$-difference equations in the theorem. With $i$ and $s$ fixed, operators on LHS and RHS applied to $\mathcal{I}^{\widetilde{T}}$ produce the factors
\[
\prod_{1\leq r\leq v_{i+1}} X_{isr} \prod_{1\leq s'\leq v_i}^{s'\neq s}Y_{is's}, \ \text{  and  } \ Q_{is} \prod_{1\leq r\leq v_{i-1}} X_{i-1,rs} \prod_{1\leq s'\leq v_i}^{s'\neq s} Y_{iss'}
\]
respectively, which are equal by definition of $\mathcal{X}_{Q}$. This completes our proof of the theorem.

\par The similarity between the $q$-difference equations satisfied by $J^{tw,Y}$ and $\mathcal{I}^{\widetilde{T}}$ suggests a more direct connection between the two $q$-functions. This is achieved by careful selections of the integration cycle $\Gamma$. We start with description of such cycles. Consider a sequence of injective maps $S = \{S_{i}\}_{i=1}^n$ where
\[
S_i: \{1,2,\cdots, v_i\}\longrightarrow \{1,2,\cdots, v_{i+1}\}.
\]
The defining equations of $\mathcal{X}_{Q}$ allows us to express the variables $X_{i,s,S_i(s)}$, which are determined by the sequence of maps $S$, in terms of the Novikov variables $Q_{is}$ and the remaining ``free'' variables $X_{isr}\ (r\neq S_i(s))$ and $Y_{iss'}$. For instance, taking $S_i = \text{id}$ for all $i$, we may express $X_{iss}$ as
\[
X_{iss} = Q_{is} \cdot \prod_{1\leq s'\leq v_i}^{s'\neq s}\frac{Y_{iss'}}{Y_{is's}} \cdot \prod_{1\leq r\leq v_{i+1}}^{r\neq s}\frac{1}{X_{isr}} \cdot \prod_{r=1}^{v_{i-1}}X_{i-1,rs}.
\]
Note that not all $X_{i-1,rs}$ appearing in the formula are free variables when $s\leq v_{i-1}$, in which case we have to further reduce the expression. Assuming that $v_{j-1}+1\leq s \leq v_j$ for some $1\leq j\leq i$ (i.e. $j$ is the smallest index such that $Q_{js}$ is defined), we would eventually reach
\begin{align*}
X_{iss} = & \prod_{k=j+1}^i \left( Q_{ks}\cdot \prod_{1\leq s'\leq v_k}^{s'\neq s}\frac{Y_{kss'}}{Y_{ks's}} \cdot \prod_{1\leq r\leq v_{k+1}}^{r\neq s}\frac{1}{X_{ksr}} \cdot \prod_{1\leq r\leq v_{k-1}}^{r\neq s}X_{k-1,rs} \right)\\
& \ \ \cdot Q_{js}\cdot \prod_{1\leq s'\leq v_j}^{s'\neq s}\frac{Y_{jss'}}{Y_{js's}} \cdot \prod_{1\leq r\leq v_{j+1}}^{r\neq s}\frac{1}{X_{jsr}} \cdot \prod_{1\leq r\leq v_{j-1}}X_{j-1,rs}.
\end{align*}
In this way, we may lift any $q$-lattice in the space of the free variables to one on the torus $\mathcal{X}_{Q}$, and we define the integration cycle $\Gamma_S$ associated to $S$ to be the lift of
\[
L_S = \prod_{(i,s,r)\in \mathfrak{I}_X}^{r\neq S_i(s)}\{X_{isr}=q^{-d} | d\in\mathbb{Z}\} \ \times \ \prod_{(i,s,s')\in \mathfrak{I}_Y}\{Y_{iss'}=-q^d | d\in\mathbb{Z}\}.
\]
We denote by $\mathcal{I}^{\widetilde{T}}_S$ the integral over $\Gamma_S$.

\par Each such sequence of maps $S$ naturally gives rise to a $T$-fixed point on $Y$, and the injectivity of $S_i$ guarantees that the fixed point descends to some point on $X$. More precisely, recall that any point on $Y$ may be represented by a sequence of matrices in $R = M_{v_2\times v_1}(\mathbb{C})\times\cdots\times M_{N\times v_n}(\mathbb{C})$, then the fixed point to which $S$ corresponds is represented by 
\[
\left( \sum_{i=1}^{v_1}E_{S_1(i),i}, \cdots, \sum_{i=1}^{v_n}E_{S_n(i),i} \right) \in R = M_{v_2\times v_1}(\mathbb{C})\times\cdots\times M_{N\times v_n}(\mathbb{C}),
\]
where $E_{ab}$ refers to the elementary matrix (of proper size) with the $(a,b)$-entry being $1$ and all other entries vanishing. 

\par Eventually we will prove that $\mathcal{I}^{\widetilde{T}}$ integrated over the cycle $\Gamma_S$ is simply a multiple of $J^{tw,Y}$ localized at the corresponding fixed point, and thus, under the appropriate limit of the equivariant parameters and the Novikov variables, a multiple of $J^X$ localized at the corresponding fixed point. We still take the case where $S_i = \text{id}$ for all $i$ as an example. For simplicity of notation, we denote it by $S = \text{Id} = \{S_i = \text{id}\}_{i=1}^n$. Note its corresponding fixed point on $Y$ is exactly our old friend $A$ (see Section \ref{NonisoRec}). Down to the flag variety $X$, $S = \text{Id}$ gives rise to the $T$-invariant coordinate flag
\[
\Span\{\e_1,\cdots,\e_{v_1}\}\subset \Span\{\e_1,\cdots,\e_{v_2}\} \subset\cdots\subset \Span\{\e_1,\cdots,\e_{v_n}\} \subset \mathbb{C}^N
\]
which we also denoted by $A$ by abuse of notation.

\par In this case, the factors associated to $X_{iss}$ in the integrand are
\[
X_{iss}^{\ln\Lambda_{i+1,s}/\ln q} \cdot e^{\sum_{k>0}\frac{X_{iss}^k}{k(1-q^k)}}.
\]
Replacing each $X_{iss}$ with the formula above in terms of the free variables, and rewriting the $q$-exponential functions through
\[
e^{\sum_{k>0}\frac{X_{iss}^k}{k(1-q^k)}} = \sum_{d_{is}\geq 0}\frac{X^{d_{is}}_{iss}}{(1-q)(1-q^2)\cdots(1-q^{d_{is}})},
\]
we expand $\mathcal{I}^{\widetilde{T}}_{\Id}$ into an infinite sum indexed by powers of Novikov variables
\[
\mathcal{I}^{\widetilde{T}}_{\Id} = \sum_{d_{is}\geq 0} \ \prod_{i=1}^{n}\prod_{s=1}^{v_i}\frac{Q_{is}^{\sum_{k=i}^{n}d_{is}+\frac{\ln\Lambda_{i+1,s}}{\ln q}}}{(1-q)(1-q^2)\cdots(1-q^{d_{is}})} \cdot \mathcal{I}_{\Id}^{\{d_{is}\}},
\]
where $\mathcal{I}_{\Id}^{\{d_{is}\}}$ is independent of the Novikov variables. The term $Q_{is}^{\sum_{k=i}^{n}d_{is}+\frac{\ln\Lambda_{i+1,s}}{\ln q}}$ appears because $Q_{is}$ arises (and only arises) in the expansion of $X_{kss}$ with $k\geq i$. Applying the change of variables
\[
D_{is} = \sum_{k=i}^n d_{is}, \quad (\forall \ 1\leq i\leq n, 1\leq s\leq v_i)
\]
or equivalently
\[
d_{is} = D_{is}-D_{i+1,s}, \quad (\forall \ 1\leq i\leq n, 1\leq s\leq v_i)
\]
we may re-write the sum above as
\[
\mathcal{I}^{\widetilde{T}}_{\Id} = \sum_{D_{is}\geq 0}\prod_{i=1}^{n}\prod_{s=1}^{v_i}\frac{Q_{is}^{D_{is}+\frac{\ln\prod_{k=i}^{n}\Lambda_{i+1,s}}{\ln q}}}{\prod_{l=1}^{D_{is}-D_{i+1,s}}(1-q^l)} \cdot \mathcal{I}_{\Id}^{\{D_{is}\}},
\]
where, by explicit computation, $\mathcal{I}_{\Id}^{\{D_{is}\}} = $
\begin{align*}
\pm \int_{L_{\Id}} & \prod_{(i,s,r)\in \mathfrak{I}_X}^{r\neq s}X_{isr}^{\frac{\ln \Lambda_{i+1,r}}{\ln q}}X_{isr}^{-D_{is}+D_{i+1,r}+ \frac{\ln \frac{\prod_{k=i+2}^{n+1}\Lambda_{kr}}{\prod_{k=i+1}^{n+1}\Lambda_{ks}}}{\ln q}} \cdot 
\prod_{(i,s,s')\in \mathfrak{I}_Y}Y_{iss'}^{1+\frac{\ln y}{\ln q}}Y_{iss'}^{D_{is}-D_{is'}+ \frac{\ln \frac{\prod_{k=i+i}^{n+1}\Lambda_{ks}}{\prod_{k=i+1}^{n+1}\Lambda_{ks'}}}{\ln q}} \cdot \\
& \exp{\left(\sum_{k>0} \frac{\sum_{(i,s,r)\in \mathfrak{I}_X}^{r\neq s} X_{isr}^k - q^k\sum_{i,s,s'\in\mathfrak{I}_Y}Y_{iss'}^k}{k(1-q^k)}\right)} \cdot \bigwedge_{(i,s,r)\in \mathfrak{I}_X}^{r\neq s}d_q \ln X_{isr} \wedge \bigwedge_{(i,s,s')\in \mathfrak{I}_Y} d_q \ln Y_{iss'}.
\end{align*}
Here $L_{\Id}$ is the $q$-lattice in the space of free variables whose lift is $\Gamma_{\Id}$. By definition of $D_{is}$ and the requirement that $d_{is} = D_{is}-D_{i+1,s}\geq 0$, the term associated to a specific set of $D_{is}$ in $\mathcal{I}^{\widetilde{T}}_{\Id}$ should only exist if $D_{is}\geq D_{i+1,s}$ for all possible $i$ and $s$. In this sense, our notation for the denominator
\[
\prod_{l=1}^{D_{is}-D_{i+1,s}}(1-q^l) = \frac{\prod_{l=-\infty}^{D_{is}-D_{i+1,s}}(1-q^l)}{\prod_{l=-\infty}^{0}(1-q^l)}
\] 
is convenient since it will give a vanishing factor $(1-q^0) = 0$ in the numerator when $D_{is}-D_{i+1,s}<0$. 

\par Moreover, in order to understand the expression of $\mathcal{I}_{\Id}^{\{D_{is}\}}$, note that
\begin{itemize}
	\item for a fixed 3-tuple $(i,s,r)\in\mathfrak{I}_X$ with $r\neq s$, $X_{isr}$ appears in the expansion of $X_{i+1, rr}, X_{i+2,rr},$ $\cdots,X_{nrr}$, while $X_{isr}^{-1}$ appears in the expansion of $X_{iss}, X_{i+1,ss}, \cdots, X_{nss}$;
	\item for a fixed 3-tuple $(i,s,s')\in\mathfrak{I}_Y$, $Y_{iss'}$ appears in the expansion of $X_{iss}, X_{i+1,ss}, \cdots, X_{nss}$, while $Y_{iss'}^{-1}$ appears in the expansion of $X_{is's'}, X_{i+1.s's'},\cdots, X_{ns's'}$.
\end{itemize}
Recall that tautological bundles of the abelian quotient $Y$ are denoted by $P_{is}\ (1\leq i\leq n, 1\leq s\leq v_i)$. Localized at the $\widetilde{T}$-fixed point $A$ of $Y$ corresponding to $S=\Id$ as previously explained, these bundles become
\[
P_{is}|_A = \prod_{k=i+1}^{n+1}\Lambda_{ks}.
\]
Therefore, the expression may be simplified as $\mathcal{I}_{\Id}^{\{D_{is}\}} = $
\begin{align*}
\pm \int_{L_{\Id}} & \prod_{(i,s,r)\in \mathfrak{I}_X}^{r\neq s}X_{isr}^{-D_{is}+D_{i+1,r}+ \frac{\ln \frac{P_{i+1,r}\Lambda_{i+1,r}}{P_{is}}|_A}{\ln q}} \cdot 
\prod_{(i,s,s')\in \mathfrak{I}_Y} Y_{iss'}^{D_{is}-D_{is'} + 1 + \frac{\ln y\frac{P_{is}}{P_{is'}}|_A}{\ln q}} \cdot \\
& \exp{\left(\sum_{k>0} \frac{\sum_{(i,s,r)\in \mathfrak{I}_X}^{r\neq s} X_{isr}^k - q^k\sum_{i,s,s'\in\mathfrak{I}_Y}Y_{iss'}^k}{k(1-q^k)}\right)} \cdot \bigwedge_{(i,s,r)\in \mathfrak{I}_X}^{r\neq s}d_q \ln X_{isr} \wedge \bigwedge_{(i,s,s')\in \mathfrak{I}_Y} d_q \ln Y_{iss'}.
\end{align*}

\par Finally, we remark that the sign in the front of $\mathcal{I}_{\Id}^{\{D_{is}\}}$ emerges as we descend the $q$-volume form on $\Gamma_{\Id}$ to the one above on $L_{\Id}$. We complete our definition of the integration cycle $\Gamma_{\Id}$ here by endowing it with the orientation such that the sign in $\mathcal{I}_{\Id}^{\{D_{is}\}}$ is $+$. (And the orientation of other integration cycles $\Gamma_S$ may be chosen following the same idea.) Consequently, assembling the terms in $\mathcal{I}_{\Id}^{\{D_{is}\}}$ associated to each $X_{isr}$ and $Y_{iss'}$, we obtain
\[
\mathcal{I}_{\Id}^{\{D_{is}\}} = \prod_{(i,s,r)\in\mathfrak{I}_X}^{r\neq s} I^{(D_{is}-D_{i+1,r})}_{+,X_{isr}}\left(\frac{P_{i+1,r}\Lambda_{i+1,r}}{P_{is}}\middle|_A\right) \cdot \prod_{(i,s,s')\in\mathfrak{I}_Y} I^{(D_{is'}-D_{is})}_{-,Y_{iss'}}\left(y\frac{P_{is}}{P_{is'}}\middle|_A\right),
\]
where
\begin{align*}
I^{(d)}_{+,x}(\lambda) := \int_{0}^{\infty} \gamma^{(d)}_{+}(x;\lambda)\ d_q x & := \int_{0}^{\infty} e^{\sum_{k>0}\frac{x^k}{k(1-q^k)}} \cdot x^{-d+\frac{\ln\lambda}{\ln q}}\ d_q\ln x;\\
I^{(d)}_{-,y}(\lambda) := \int_{0}^{-\infty} \gamma^{(d)}_{-}(y;\lambda)\ d_q y & := \int_{0}^{-\infty} e^{-\sum_{k>0}\frac{q^kx^k}{k(1-q^k)}} \cdot y^{1-d+\frac{\ln\lambda}{\ln q}}\ d_q\ln x.
\end{align*}
It may be checked through direct computation that
\begin{align*}
(1-q^dq^{x\partial_x}\lambda^{-1})\cdot \gamma^{(d)}_{+}(x;\lambda) & = \gamma^{(d-1)}_{+}(x;\lambda);\\
(1-q^{1-d}q^{-y\partial_y}\lambda)\cdot \gamma^{(d)}_{-}(y;\lambda) & = \gamma^{(d-1)}_{-}(y;\lambda).
\end{align*}
Since the $q$-translation operators $q^{x\partial_x}$ and $q^{-y\partial_y}$ do not change the integrals, we obtain by induction that
\begin{align*}
I^{(d)}_{+,x}(\lambda) & = \frac{I^{(0)}_{+,x}(\lambda)}{\prod_{l=1}^d(1-\lambda^{-1}q^l)};\\
I^{(d)}_{-,y}(\lambda) & = I^{(0)}_{-,y}(\lambda) \cdot \prod_{l=1}^{-d}(1-\lambda q^l).
\end{align*}
Substituting back into $\mathcal{I}^{\widetilde{T}}_{\Id}$, we recover a multiple of $J^{tw,Y}|_A$. More precisely,
\[
\mathcal{I}^{\widetilde{T}}_{\Id} = \frac{\mathcal{I}^{\{0\}}_{\Id}}{1-q} \cdot \prod_{i=1}^n\prod_{s=1}^{v_i}Q_{is}^{\frac{\ln P_{is}|_A}{\ln q}} \cdot J^{tw,Y}|_A.
\]
Here $\mathcal{I}^{\{0\}}_{\Id}$ means the value of $\mathcal{I}_{\Id}^{\{D_{is}\}}$ when all $D_{is} = 0$. It is a $q$-function with coefficients in the representation ring of $\widetilde{T}$ but independent of the Novikov variables. It is now clear why $\mathcal{I}^{\widetilde{T}}$ satisfies a similar set of $q$-difference equations as $J^{tw,Y}$ does. In fact, $\mathcal{I}^{\{0\}}_{\Id}/(1-q)$ can entirely be regarded as a ``constant'', while $Q_{is}^{\frac{\ln P_{is}|_A}{\ln q}}$ conjugates $P_{is}|_A q^{Q_{is}\partial_{Q_{is}}}$ into $q^{Q_{is}\partial_{Q_{is}}}$. More precisely, given any function $h$ of the Novikov variables
\[
q^{Q_{is}\partial_{Q_{is}}} \cdot \left( Q_{is}^{\frac{\ln P_{is}|_A}{\ln q}} \cdot h(Q) \right) = Q_{is}^{\frac{\ln P_{is}|_A}{\ln q}} \cdot P_{is}|_A q^{Q_{is}\partial_{Q_{is}}} \cdot \left( h(Q) \right).
\]
The same argument works for the integrals $\mathcal{I}^{\widetilde{T}}_{S}$ over all other cycles $\Gamma_S$, and the tautological bundles that appear in the argument should then be localized to, instead of $A$, the $\widetilde{T}$-fixed point of $Y$ corresponding to $S$.

\par Therefore, under the specialization
\[
Q_{is}\rightarrow Q_i, \quad \Lambda_{is}\rightarrow 1 \ (1\leq i\leq n, 1\leq s\leq v_i), \quad \Lambda_{n+1,s}\rightarrow \Lambda_s, \quad y\rightarrow 1,
\]
we obtain the following theorem.
\begin{theorem}
The $q$-integral
\begin{align*}
\mathcal{I}^{T} = \int_{\Gamma\subset\mathcal{X}_{(Q_{i})}} & \ \prod_{s=1}^{v_n}\prod_{r=1}^{N} X_{nsr}^{\ln\Lambda_{r}/\ln q} \cdot \prod_{(i,s,s')\in\mathfrak{I}_Y} Y_{iss'} \cdot \exp{\sum_{k>0}\frac{\sum_{(i,s,r)\in\mathfrak{I}_X} X_{isr}^k - q^k\sum_{(i,s,s')\in\mathfrak{I}_Y} Y_{iss'}^k}{k(1-q^k)}}\\
& \cdot \frac{\left(\bigwedge_{(i,s,r)\in\mathfrak{I}_X} d_q\ln X_{isr}\right) \wedge \left(\bigwedge_{(i,s,s')\in\mathfrak{I}_Y} d_q\ln Y_{iss'}\right)}{\bigwedge_{1\leq i\leq n, 1\leq s\leq v_i} \left(\sum_{r=1}^{v_{i+1}}d_q\ln X_{isr} - \sum_{r=1}^{v_{i-1}}d_q\ln X_{i-1,rs} - \sum_{1\leq s'\leq v_i}^{s'\neq s}d_q\ln\frac{Y_{iss'}}{Y_{is's}}\right)},
\end{align*}
with suitable choices of (linear combinations of) $q$-lattices $\Gamma$ on the torus
\[
\mathcal{X}_{(Q_{i})} = \left\{(X,Y)\in\mathcal{X}\ \middle| \ \frac{\prod_{r=1}^{v_{i+1}}X_{isr}}{\prod_{r=1}^{v_{i-1}}X_{i-1,rs}} = Q_{i}\frac{\prod_{1\leq s'\leq v_i}^{s'\neq s}Y_{iss'}}{\prod_{1\leq s'\leq v_i}^{s'\neq s}Y_{is's}},\ \ \forall i,s\right\},
\]
represents components of the $T$-equivariant small $J$-function $J^X$ of $X = \text{Flag}(v_1,\cdots, v_n; N)$.
\end{theorem}
By ``components'', we mean the coefficients when the $K_T(X)$-valued $q$-function $J^X$ is expressed in terms of a basis $\{\phi_\alpha\}$ of $K_T(X)$. When the basis is taken to be the $T$-fixed point classes, the coefficients are exactly the localization of $J^X$ to the fixed points (by replacing each $P_{is}$ by its restriction).
\begin{remark}
Taking the specialization
\[
Q_{is}\rightarrow Q_i, \quad \Lambda_{is}\rightarrow 1 \ (1\leq i\leq n, 1\leq s\leq v_i), \quad \Lambda_{n+1,s}\rightarrow \Lambda_s, \quad y\rightarrow 1
\]
is a well-defined operation on $\mathcal{I}^{\widetilde{T}}$. In fact, over each fixed integration cycle $\Gamma_S$, $\mathcal{I}^{\widetilde{T}}_{\Gamma_S}$ is a product of terms of the form
\[
I^{(D_{is}-D_{i+1,r})}_{+,X_{isr}} \left(\frac{P_{i+1,r}\Lambda_{i+1,r}}{P_{is}}\middle|_S\right) \ \text{ and }\ I^{(D_{is'}-D_{is})}_{-,Y_{iss'}} \left(y\frac{P_{is}}{P_{is'}}\middle|_S\right)
\]
where $S$ denotes the $\widetilde{T}$-fixed point on $Y$ corresponding to $\Gamma_S$. It suffices to show that these terms, which become
\[
I^{(D_{is}-D_{i+1,r})}_{+,X_{isr}}\left(\frac{P_{i+1,r}}{P_{is}}\middle|_S\right) \ \text{ and }\ I^{(D_{is'}-D_{is})}_{-,Y_{iss'}}\left(\frac{P_{is}}{P_{is'}}\middle|_S\right)
\]
under the specialization, are still well-defined. By our analysis of the integrals of $f$ and $g$ at the beginning of the section, $I^{(0)}_{+,x}(\lambda)$ and $I^{(0)}_{-,y}(\lambda)$ are analytic functions of $\lambda$ away from possible poles at non-positive powers of $q$. Since $\frac{P_{i+1,r}}{P_{is}}|_S$ and $\frac{P_{is}}{P_{is'}}|_S$ are always non-trivial monomials of equivariant parameters even after the specialization by our choice of $S$, and thus never powers of $q$, $I^{(0)}_{+,X_{isr}}\left(\frac{P_{i+1,r}}{P_{is}}|_S\right)$ and $I^{(0)}_{-,Y_{iss'}}\left(\frac{P_{is}}{P_{is'}}|_S\right)$ are always well-defined, so are all factors appearing in $\mathcal{I}^{T}_{S}$.
\end{remark}

\par Note that if one modifies any of the map $S_i$ in $S$ by pre-composing it by a permutation of the domain $\{1, 2, \cdots, v_i\}$, the resulting $S$ will still give rise to the same fixed point on $X$. Correspondingly, the integral over $\Gamma_S$ is also invariant under such permutations of $S$ under the above limit of the equivariant parameters and the Novikov variables.

\section*{Acknowledgement}
This material is based upon work supported by the National Science Foundation under Grant DMS-190632. The author would like to thank his PhD advisor Alexander Givental for helpful discussions, valuable advice and constant support. The author would also like to thank Yaoxiong Wen and Rachel Webb for discussions on quasimaps and the abelian/non-abelian correspondence.

\bibliographystyle{abbrv}
\bibliography{notes}

$\,$\
\noindent
\textsc{Sorbonne Université and Université Paris Cité, CNRS, IMJ-PRG, F-75005 Paris, France}

\textit{e-mail address:} \href{mailto:xiaohanyan@imj-prg.fr}{xiaohanyan@imj-prg.fr}

\end{document}